\newtheorem{lemma}{LEMMA}[section]
\newtheorem{proposition}[lemma]{PROPOSITION}
\newtheorem{corollary}[lemma]{COROLLARY}
\newtheorem{theorem}[lemma]{THEOREM}
\newtheorem{remark}[lemma]{REMARK}
\newtheorem{remarks}[lemma]{REMARKS}
\newtheorem{examples}[lemma]{EXAMPLES}
\newtheorem{definition}[lemma]{DEFINITION}
\newtheorem*{ass}{Assumption}
\newcommand{\real}{\mathbbm{R}}
\newcommand{\nat}{\mathbbm{N}}
\newcommand{\limn}{\lim_{n \to \infty}}
\renewcommand{\a}{\alpha}
\renewcommand{\b}{\beta}
\newcommand{\g}{\gamma}
\newcommand{\vp}{\varphi}
\newcommand{\ve}{\varepsilon}
\newcommand{\reald}{{\real^d}}
\newcommand{\on}{\quad\text{ on }}
\newcommand{\und}{\quad\mbox{ and }\quad}
\newcommand{\inv}{^{-1}}
\newcommand{\ov}{\overline}
\newcommand{\V}{\mathcal V}  
\newcommand{\W}{\mathcal W}  
\newcommand{\C}{\mathcal C}  
\newcommand{\F}{\mathcal F}
\newcommand{\B}{\mathcal B}
\renewcommand{\S}{\mathcal S}
\newcommand{\K}{\mathcal K}
\newcommand{\Q}{\mathcal Q}
\newcommand{\dist}{\mbox{\rm dist}}
\newcommand{\convex}{\operatorname*{conv}}
\newcommand{\itemframe}%
{\setlength{\parskip}{10pt}\begin{enumerate} \setlength{\topsep}{10pt}%
\setlength{\itemsep}{15pt}\setlength{\parsep}{5pt}}
\newcommand{\vx}{\ve_x}
\newcommand{\Px}{\mathcal P(X)}
\newcommand{\uc}{{U^c}}
\newcommand{\vc}{{V^c}}
\newcommand{\wc}{{W^c}}
\newcommand{\lam}{\lambda}
\newcommand{\kap}{\operatorname*{cap}}
\newcommand{\mc}{m_{\mbox{\small\,cap}}}
\newcommand{\ppt}{\mathbbm P=(P_t)_{t>0}}
\newcommand{\vvl}{\mathbbm V=(V_\lambda)_{\lambda>0}}
\newcommand{\liml}{\lim_{\lambda\to \infty}} 
\newcommand{\ev}{E_{\mathbbm V}}  
\newcommand{\es}{E_{\mathbbm P}}  
\newcommand{\lvl}{\lambda V_\lambda}
\newcommand{\U}{{}^U\!}
\newcommand{\du}{\delta_U}
\newcommand{\gr}{G_0 }
\title{Unavoidable sets and\\  harmonic measures  living on small sets} 
\author{WOLFHARD HANSEN and IVAN NETUKA
\thanks{Both authors gratefully acknowledge support
by CRC-701, Bielefeld.}}
\date{}
\begin{document}
\maketitle

\begin{abstract}
Given a  connected open  set $U\ne\emptyset$ in $\mathbbm R^d$, $d\ge 2$, 
a relatively closed set~$A$ in~$U$ is called \emph{unavoidable in~$U$}, if
Brownian motion,  starting in $x\in U\setminus A$ and killed when leaving~$U$,  hits~$A$ 
almost surely or, equivalently,  if  the  harmonic measure for~$x$ with respect to~$U\setminus A$ has mass $1$ on $A$.
First a new criterion for unavoidable sets is proven  
 which  facilitates the construction of smaller and smaller unavoidable sets in $U$.  
Starting with an arbitrary champagne subdomain of~$U$
(which is obtained omitting  a~locally finite union of pairwise disjoint closed balls 
$\overline B(z, r_z)$, $z\in Z$,  
 satisfying $\sup_{z\in Z} r_z/\dist(z,U^c)<1$),  
a~combination of the criterion and the existence of small nonpolar compact sets of Cantor type yields a~set~$A$ 
on which harmonic measures for $U\setminus A$ are living and which has Hausdorff dimension~$d-2$ and, if~$d=2$,  
logarithmic Hausdorff dimension~$1$.

This can be done as well for Riesz potentials (isotropic $\alpha$-stable processes) on Euclidean space and for censored stable processes
on $C^{1,1}$ open subsets. Finally,  in the very general setting
of a balayage space $(X,\mathcal W)$ on which the function $1$ is harmonic (which  covers not only large classes of   
second order partial differential equations, but also non-local situations as, for example, given by  Riesz potentials,
isotropic unimodal L\'evy processes 
or censored stable processes) a~construction of champagne subsets~$X\setminus A$ of~$X$ with
 \emph{small} unavoidable sets~$A$ is given which generalizes (and partially improves) 
recent constructions in the classical case.

 {
 Keywords: Harmonic measure; Brownian motion; capacity;  champagne subdomain; champagne subregion;
unavoidable set; Hausdorff measure; Hausdorff dimension; Riesz potential; $\a$-stable process; L\'evy process;   
harmonic space; balayage space; Hunt process; equilibrium measure

  MSC:  31A12, 31B05, 31B15, 31D05, 60J45, 60J65, 60J25, 28A78}
\end{abstract}

\section{Introduction}

This paper is devoted to the construction of \emph{small} unavoidable sets in various potential-theoretic settings
(classical potential theory, Riesz potentials (isotropic $\a$-stable processes), censored $\a$-stable processes,
 harmonic spaces, balayage spaces). In particular, we shall give optimal answers to the question of how small a~set      
 may be on~which harmonic measures is living.

 For the moment, let $U$ be a non-empty connected  open  set in $\reald$, $d\ge 2$. If $d=2$ we assume
that the complement of $U$ is nonpolar (otherwise our considerations become trivial).
A~relatively closed subset  $A$ of $U$  is  called \emph{unavoidable in~$U$}
if~Brownian motion, starting in $U\setminus A$ and killed when leaving $U$,  hits $A$ almost surely or, equivalently,
if~$\mu_y^{U\setminus A}(A)=1$,  for every $y\in U\setminus A$, where $\mu_y^{U\setminus A}$ 
denotes the harmonic measure at~$y$  with respect to~$U\setminus A$.

A \emph{champagne subdomain} of  $U$  is obtained
by omitting a union $A$ of  pairwise disjoint closed balls $\ov B(x, r_z)$, $z\in Z$, 
the \emph{bubbles}, 
where $Z$ is an infinite, locally finite set in $U$, $\sup_{z\in Z} r_z/\dist(z,\partial U)<1$ and,
if $U$ is unbounded, the radii $r_z$ tend to $0$ as~$z\to \infty$.    It will sometimes be convenient to write $Z_A$ instead of~$Z$.

For $r>0$, let 
$$
                     \kap(r):=\begin{cases}
                                r^{d-2} ,&\quad\mbox{ if }d\ge 3,\\[1mm]
                      (\log^+ \frac 1r)\inv  ,&\quad\mbox{ if } d=2.            
                   \end{cases}                   
  $$
Recently, the following has been shown (see \cite[Theorem 1.1]{HN-champagne};
cf.\  \cite{gardiner-ghergu,pres} for the case $U=B(0,1)$ and $h(t)=(\kap(t))^\ve$).

\begin{theorem}\label{main-old}
Let $h\colon (0,1)\to (0,\infty)$ be such that  $\liminf_{t\to 0} h(t)=0$.     
Then, for every $\delta>0$, there is a champagne subdomain  $U\setminus A$ of  $U$ such that $A$~is unavoidable  in~$U$
and $\sum_{z\in Z_A} \kap(r_z)h(r_z)<\delta$.
\end{theorem}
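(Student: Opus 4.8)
The plan is to build $A$ one ``layer'' at a time, moving outwards towards $\partial U$ (and towards infinity, if $U$ is unbounded), placing into the $n$-th layer a large number of \emph{extremely} small bubbles: their common radius will be chosen along a sequence $t\downarrow0$ on which $h$ is tiny, while their \emph{number} will be fixed by a probabilistic condition forcing unavoidability. Write $\mathbb P_x$ for the law of Brownian motion started at $x$ (with coordinate process $(X_t)$), $\sigma_A$ for the hitting time of $A$, and $\tau_U$ for the exit time of $U$; recall that $A$ is unavoidable in $U$ iff $\mathbb P_x(\sigma_A<\tau_U)=1$ for every $x\in U\sms A$.

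\textbf{Step 1 (a crossing criterion).} First I would fix an exhaustion $W_1,W_2,\dots$ of $U$ by relatively compact open sets with $\ov{W_n}\subset W_{n+1}$ and $\bigcup_nW_n=U$, write $\tau_n$ for the exit time of $W_n$ (so $\tau_n<\tau_U$ always, since $X_{\tau_n}\in\partial W_n\subset U$), and establish the following: if $A=\bigcup_nA_n$ with each $A_n$ a finite, pairwise disjoint union of closed balls contained in $W_{n+1}\sms\ov{W_n}$, and if there are numbers $p_n\in(0,1)$ with $\sum_np_n=\infty$ such that
\[
\mathbb P_w\bigl(\sigma_{A_n}\le\tau_{n+1}\bigr)\ge p_n\qquad\text{for all }w\in\partial W_n ,
\]
then $A$ is unavoidable in $U$. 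The argument is short: since $A_n\cap W_n=\emptyset$, the strong Markov property at $\tau_n$ gives $\mathbb P_x(\sigma_A\ge\tau_{n+1}\mid\F_{\tau_n})\le 1-p_n$ on $\{\sigma_A\ge\tau_n\}$, so $\mathbb P_x(\sigma_A\ge\tau_{N+1})\le\prod_{n=m}^N(1-p_n)$ for $x\in W_m$, which tends to $0$; as $\tau_{N+1}<\tau_U$, this forces $\mathbb P_x(\sigma_A<\tau_U)=1$.

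\textbf{Step 2 (filling one layer).} For each fixed $n$ I would cover the compact set $\partial W_n$ by finitely many balls $B(w_i,s_i)$, $w_i\in\partial W_n$, so small that $B(w_i,2s_i)\subset W_{n+1}$, and inside the part of $B(w_i,s_i)$ lying in $W_{n+1}\sms\ov{W_n}$ place $m_i$ pairwise disjoint closed balls of a common tiny radius $\rho_n$, with centres on a grid so that their mutual distances, and their distances from $\partial W_n$, exceed a fixed multiple of $\rho_n$. The key input is the standard comparison of Brownian hitting probabilities with capacity --- $\mathbb P_w(\sigma_K<\tau_D)\asymp\Kap(K)$ for a compact $K$ in the bulk of a domain $D$ of bounded geometry --- together with the fact that the capacity of a union of well-separated balls is comparable to the sum $\asymp m_i\,\kap(\rho_n)$ of the individual capacities. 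Combined with Brownian scaling, this shows that from \emph{any} $w\in\partial W_n\cap B(w_i,s_i)$ the motion hits one of the corresponding $m_i$ balls before leaving $B(w_i,2s_i)$, hence before $\tau_{n+1}$, with probability at least $c_i\,m_i\,\kap(\rho_n)$, where $c_i>0$ depends on $s_i$ and on the local shape of $\partial W_n$ but \emph{not} on $\rho_n$. Choosing $m_i$ with $c_im_i\kap(\rho_n)\ge p_n$ makes the crossing hypothesis of Step~1 hold for $A_n:=$ the union of all these balls, while $\sum_im_i\kap(\rho_n)=:C_n\le\Gamma_np_n$ with $\Gamma_n:=\sum_ic_i^{-1}$ finite and, decisively, independent of $\rho_n$. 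The family $Z=\bigcup_nZ_n$ is then locally finite with radii tending to $0$, and for $\rho_n$ small the champagne condition $\sup_zr_z/\dist(z,\partial U)<1$ holds and $U\sms A$ is a domain. For $d=2$ I would carry out everything with the logarithmic capacity $\kap$, using conformal invariance of Brownian motion (this is the place where nonpolarity of $U^c$ is needed).

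\textbf{Step 3 (choosing the parameters, and the main difficulty).} Finally I would put $p_n:=1/n$, so $\sum_np_n=\infty$, and, exploiting $\liminf_{t\to0}h(t)=0$, choose $\rho_1>\rho_2>\dots\downarrow0$ inductively, each $\rho_n$ small enough both to meet the smallness demands of Step~2 and to satisfy $h(\rho_n)<\delta\,2^{-n}/(\Gamma_np_n)$. Then
\[
\sum_{z\in Z_A}\kap(r_z)\,h(r_z)=\sum_nC_n\,h(\rho_n)\le\sum_n\Gamma_np_n\,h(\rho_n)<\delta ,
\]
while Step~1 makes $A$ unavoidable in $U$. The hard part is Step~2: obtaining the crossing estimate \emph{uniformly in $w\in\partial W_n$} while keeping $C_n$ independent of $\rho_n$ forces one to deal with a Whitney-type decomposition of the possibly very irregular layers $W_{n+1}\sms\ov{W_n}$ and to control hitting probabilities of unions of well-separated tiny balls in domains of bounded geometry (with the logarithmic analogues in the plane). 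Steps~1 and~3, by contrast, are routine.
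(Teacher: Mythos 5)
Your outline takes a route that is recognizably different from the paper's: you replace the paper's fixed-threshold reduite criterion (Proposition~\ref{AB-unavoidable},2, which needs a single $\kappa>0$ that works on all of $A$) by a Borel--Cantelli crossing argument ($\sum p_n=\infty$), and you replace the paper's equilibrium-measure construction (Theorem~\ref{KZ}: cut the equilibrium measure of a boundary layer into pieces $L_z$ with $\kap(r_z)\approx\mu(L_z)$ and prove the two-sided comparison $c_1 G\nu\le G\mu\le C_1G\nu$) by a Whitney-type covering of $\partial W_n$ together with the usual quasi-additivity of capacity for well-separated small balls. One advantage of your scheme, if it could be completed, is that the common radius $\rho_n$ in the $n$-th layer is a free parameter, so you can pick it along a subsequence on which $h$ is small; this is exactly why your outline reaches the $\liminf$ hypothesis of Theorem~\ref{main-old}, whereas the paper's Theorem~\ref{KZ}/Corollary~\ref{corollary-classic} route (in which the radii are dictated by the equilibrium-measure partition) only handles $\lim_{t\to0}h(t)=0$ and refers to \cite{HN-champagne} for the general $\liminf$ case.

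However, Step~2 is a genuine gap, and it is exactly the part that carries the mathematical weight. The two ``standard'' ingredients you cite are not applicable in the form stated. The comparison $\mathbb P_w(\sigma_K<\tau_D)\asymp\Kap(K)$ is valid for $K$ of small diameter far from $w$ relative to its size; here $K$ is a union of $m_i$ balls spread across the whole of $B(w_i,s_i)\cap(W_{n+1}\setminus\ov W_n)$, so $\diam K\sim s_i$ and $w\in\partial W_n$ can be adjacent to parts of $K$. What is actually needed is the identity $\mathbb P_w(\sigma_K<\tau_D)=G_D\mu_K(w)$ together with a uniform lower bound on $G_D(w,\cdot)$ on $\supp\mu_K$ and a lower bound $\Kap_D(K)\ge c\,m_i\kap(\rho_n)$. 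Neither is automatic: the second is precisely quasi-additivity of capacity, which requires a quantitative Whitney-type separation that must be \emph{verified} compatible with the constraint $m_i\kap(\rho_n)\approx p_n/c_i$ and with the balls fitting into the layer; and the constant $c_i$ must be shown not to degenerate as $\rho_n\downarrow0$. The paper devotes essentially all of Section~\ref{champagne-bal} (Theorem~\ref{KZ}, proved via the reverse manoeuvre of cutting the layer's equilibrium measure into pieces concentrated on small balls, with a two-sided comparison between the corresponding Green potentials; it even contrasts itself with the one-sided Aikawa--Borichev quasi-additivity argument you are invoking) to exactly this estimate. Your Steps~1 and~3 are sound, but as written Step~2 is a plausibility sketch rather than a proof, and you acknowledge this yourself; to close the argument you would have to supply that estimate with the same care the paper applies in proving Theorem~\ref{KZ}.
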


We note that, for every champagne subdomain $U\setminus A$ of $U$ with unavoidable $A$ the series
$\sum_{z\in Z_A} \kap(r_z)$ diverges. This follows by a slight modification of arguments in \cite{gardiner-ghergu,pres}
(for the possibility of omitting finitely many bubbles, see (1b) in Lemma~\ref{simple}). Therefore the  condition
$\liminf_{t\to 0} h(t)=0$ is also necessary for the conclusion in Theorem \ref{main-old}.

The proof of Theorem \ref{main-old} given in \cite{HN-champagne} is based on a criterion
for unavoidable sets which, in probabilistic terms,  relies on the continuity
of the paths for Brownian motion (see \cite[Proposition 2.1]{HN-champagne}).
We shall use a criterion which, using entry times  
$T_E(\omega):=\inf \{t\ge 0\colon X_t(\omega)\in E\}$ for Borel measurable sets~$E$,  states the following.

\begin{proposition}\label{unavoidable}
Let $A$ and $B$ be relatively closed subsets of $U$ and $\kappa>0$ such that $A$ is unavoidable
in $U$ and $P^x[T_B<T_\uc]\ge \kappa$, for every $x\in A$. Then $B$~is unavoidable in $U$.
\end{proposition}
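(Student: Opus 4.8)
The plan is to restate the claim probabilistically and to prove it by a zero--one argument. Put $\zeta:=T_\uc$ for the exit time of~$U$ and, for $x\in U$, let $N:=\{T_B\ge\zeta\}$ be the event that the process started at~$x$ leaves~$U$ (or lives on in~$U$) without ever meeting~$B$; since $T_B=0<\zeta$ when $x\in B$, it is enough to show $P^x[N]=0$ for every $x\in U$. A first, easy consequence of the unavoidability of~$A$ is that $P^x[T_B<\zeta]\ge\kappa$ in fact holds for \emph{every} $x\in U$: by unavoidability $T_A<\zeta$ $P^x$-a.s.\ and $X_{T_A}\in A$, and on $\{T_A<\zeta\}$ one has $\{T_B\circ\theta_{T_A}<\zeta\circ\theta_{T_A}\}\subseteq\{T_B<\zeta\}$, so the strong Markov property at~$T_A$ and the hypothesis give $P^x[T_B<\zeta]\ge E^x\bigl[P^{X_{T_A}}[T_B<\zeta]\bigr]\ge\kappa$. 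By itself this only yields $P^x[N]\le1-\kappa$; the point is to improve it by using that the process meets~$A$ not merely once but cofinally often before time~$\zeta$.

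To that end I would first observe that, applying the (simple) Markov property at each rational time~$q$ together with $P^{X_q}[T_A<\zeta]=1$ on $\{q<\zeta\}$, $P^x$-a.s.\ the set $\{t<\zeta:X_t\in A\}$ is nonempty and has supremum~$\zeta$. Fix an exhaustion $U_1\Subset U_2\Subset\cdots$ of~$U$ with $\bigcup_n U_n=U$, let $\tau_n$ be the exit time of~$U_n$ (so $\tau_n<\zeta$ and $\tau_n\uparrow\zeta$ $P^x$-a.s.), and set
$$S_n:=\inf\{\,t\ge\tau_n:X_t\in A\,\}.$$
These are stopping times, and $P^x$-a.s.\ they are nondecreasing with $\tau_n\le S_n<\zeta$ (hence $S_n\uparrow\zeta$) and $X_{S_n}\in A$ (here the relative closedness of~$A$ and the standard path regularity of the process enter). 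Now, by the strong Markov property at~$S_n$, by the hypothesis applied at $X_{S_n}\in A$, and again by $\{T_B\circ\theta_{S_n}<\zeta\circ\theta_{S_n}\}\subseteq\{T_B<\zeta\}$ on $\{S_n<\zeta\}$,
$$P^x[\,T_B<\zeta\mid\F_{S_n}\,]\ \ge\ P^{X_{S_n}}[T_B<\zeta]\ \ge\ \kappa\qquad P^x\text{-a.s.,}$$
and therefore $P^x[\,N\mid\F_{S_n}\,]\le1-\kappa$ $P^x$-a.s.

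Finally, since $S_n\uparrow\zeta$ with $S_n<\zeta$, the event $N=\{T_B\ge\zeta\}$ is measurable with respect to $\bigvee_n\F_{S_n}$, so by L\'evy's martingale convergence theorem $P^x[\,N\mid\F_{S_n}\,]\to\mathbbm 1_N$ $P^x$-a.s. Together with the previous line this forces $\mathbbm 1_N\le1-\kappa<1$, hence $\mathbbm 1_N=0$, $P^x$-a.s.; that is, $P^x[N]=0$. As $x\in U$ was arbitrary, $B$ is unavoidable in~$U$.

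I expect the only delicate point to be the construction of the stopping times $S_n$: for a general Hunt process one must make sure that the visits to~$A$ genuinely accumulate at~$\zeta$ and can be captured by an increasing sequence of stopping times converging to~$\zeta$. This is not automatic once $A$ has empty interior (as it will in the applications, where~$A$ is of Cantor type), since then ``successive returns to~$A$'' may fail to strictly increase or may accumulate strictly before~$\zeta$; interleaving the returns with the exhaustion times~$\tau_n$ is what repairs this. Everything else is a routine application of the strong Markov property and of L\'evy's zero--one law.
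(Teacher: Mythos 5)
Your argument is correct, but it takes a genuinely different route from the paper's. The paper proves this analytically, within the balayage framework (Proposition \ref{AB-unavoidable}): writing the hypothesis as $R_1^B\ge\kappa$ on $A$, it observes that any $u\in\W$ with $u\ge 1$ on $B$ satisfies $u\ge\kappa$ on $A$, whence $R_1^B\ge R_\kappa^A=\kappa R_1^A=\kappa$ on all of $U$; then an analytic dichotomy proved in part~1 of the same proposition (for any set $E$, either $R_1^E\equiv 1$ or $\inf R_1^E=0$, obtained from the identity $R_1^E=R_{1-\gamma}^E+\gamma$ with $\gamma:=\inf R_1^E$) forces $R_1^B\equiv 1$. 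Your proof instead runs the probabilistic zero--one argument directly on the process: you build stopping times $S_n\uparrow\zeta$ with $X_{S_n}\in A$, $S_n<\zeta$, by interleaving hitting times of $A$ with the exit times $\tau_n$ of an exhaustion (this interleaving is exactly what guarantees $S_n\to\zeta$ while keeping $S_n<\zeta$), then apply the strong Markov property at $S_n$ to get $P^x[N\mid\F_{S_n}]\le 1-\kappa$, and conclude by L\'evy's upward convergence since $N^c=\bigcup_n\{T_B<S_n\}\in\bigvee_n\F_{S_n}$. Both proofs are complete; the paper's is shorter and works verbatim in any balayage space where $1$ is harmonic, requiring no path regularity or exhaustion, while yours makes the mechanism transparent (each return to $A$ gives a fresh chance $\kappa$ of hitting $B$, and the returns accumulate at the lifetime) and replaces the analytic dichotomy by L\'evy's zero--one law. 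Your preliminary remark that $P^x[T_B<\zeta]\ge\kappa$ for all $x\in U$ is not actually used once the $S_n$ are in place, and your informal step via rational times is subsumed by the cleaner $S_n$ construction, so the core argument is the second half.
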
 

Such a criterion holds as well for very general transient Hunt processes
 on locally compact spaces $X$ with countable base (cf.\ Proposition \ref{AB-unavoidable} and its proof).
Iterated application may lead to very small unavoidable sets. 

Starting, for example, in our classical
case with an arbitrary champagne subdomain of $U$     with unavoidable union 
of bubbles (obtained by Theorem \ref{main-old} or more simply by directly using Proposition \ref{unavoidable}
twice), an application of the new criterion quickly leads to the following result
on the smallness of sets, where harmonic measures may live (cf.\  Corollary  \ref{loga2}).

\begin{theorem}\label{loga}
There exists a relatively closed set $A$ in $U$ having the following properties:
\begin{itemize} 
\item The open set $U\setminus A$ is connected.
\item  For every  $x\in U\setminus A$, $\mu_x^{U\setminus A}(A)=1$.
\item The set $A$ has Hausdorff dimension $d-2$ and, if $d=2$,  logarithmic Hausdorff dimension $1$.
\end{itemize} 
\end{theorem}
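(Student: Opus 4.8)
\emph{Proof plan.} I would start from an arbitrary unavoidable champagne subdomain of $U$ and shrink each of its bubbles to a fixed nonpolar compact set of Cantor type of the right dimension, using Proposition~\ref{unavoidable} to keep unavoidability. Concretely: by Theorem~\ref{main-old} there is a champagne subdomain $U\sms A_0$ of $U$ with $A_0=\bigcup_{z\in Z}\ov B(z,r_z)$ unavoidable in $U$; put $\theta:=\sup_{z\in Z}r_z/\dist(z,\partial U)\in(0,1)$, so that $\ov B(z,r_z)\subset B(z,r_z/\theta)\subset U$ for every $z\in Z$. Fix once and for all a compact \emph{nonpolar} set $K$ of Cantor type with $K\subset\ov B(0,1/2)\subset\reald$, having Hausdorff dimension $d-2$ and, if $d=2$, logarithmic Hausdorff dimension $1$; the existence of such sets --- the extremal case, since sets of Hausdorff dimension $<d-2$ are polar --- is the geometric ingredient announced in the abstract. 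Then set $K_z:=z+r_zK\subset\ov B(z,r_z/2)$ for $z\in Z$ and $A:=\bigcup_{z\in Z}K_z$. Local finiteness of $Z$ in $U$ and compactness of the $K_z$ make $A$ relatively closed in $U$.

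The heart of the argument is that $A$ is unavoidable, which I would get from Proposition~\ref{unavoidable} applied to the pair $(A_0,A)$: it suffices to exhibit $\kappa>0$ with $P^x[T_A<T_\uc]\ge\kappa$ for all $x\in A_0$. Fix $x\in A_0$, let $z\in Z$ be the unique index with $x\in\ov B(z,r_z)$, and write $V_z:=B(z,r_z/\theta)$. Since $V_z\subset U$ and $K_z\subset A$, every path hitting $K_z$ before leaving $V_z$ has hit $A$ strictly before leaving $U$; hence $P^x[T_A<T_\uc]\ge P^x[T_{K_z}<T_{V_z^c}]$. The affine map $y\mapsto(y-z)/r_z$ carries $K_z$ to $K$, $V_z$ to $B(0,1/\theta)$ and $\ov B(z,r_z)$ to $\ov B(0,1)$, so by translation invariance and Brownian scaling
\[
P^x[T_{K_z}<T_{V_z^c}]=P^{(x-z)/r_z}\bigl[T_K<T_{B(0,1/\theta)^c}\bigr].
\]
Now $y\mapsto P^y[T_K<T_{B(0,1/\theta)^c}]$ dominates the equilibrium potential $\hat R_1^K$ of $K$ relative to the connected open set $B(0,1/\theta)$; since $K$ is nonpolar this function is lower semicontinuous, superharmonic and strictly positive on $B(0,1/\theta)$, hence has a strictly positive minimum $\kappa$ on the compact set $\ov B(0,1)\subset B(0,1/\theta)$. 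As $(x-z)/r_z\in\ov B(0,1)$, this $\kappa$ does the job, and Proposition~\ref{unavoidable} gives $\mu_x^{U\sms A}(A)=1$ for every $x\in U\sms A$.

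It remains to read off the last two properties. Each $K_z$ is a similarity image of $K$, so $\dim_H K_z=d-2$ (resp.\ it has logarithmic Hausdorff dimension $1$ when $d=2$); by countable stability of these dimensions, $A=\bigcup_{z}K_z$ has the asserted dimension. For connectedness, $U\sms A$ contains the connected open set $U\sms A_0$ together with, for each $z$, the set $\ov B(z,r_z)\sms K_z$, which is connected --- a set of Hausdorff dimension $d-2<d-1$ cannot locally disconnect $\reald$ --- and which contains the sphere $\partial B(z,r_z)$, a connected subset of $\ov{U\sms A_0}$; gluing these pieces along the spheres shows that $U\sms A$ is connected.

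The only real obstacle is the geometric input isolated in the first paragraph: the construction of a nonpolar compact Cantor-type set whose Hausdorff dimension equals $d-2$ exactly (and whose logarithmic Hausdorff dimension equals $1$ when $d=2$). Everything else is soft, using only Brownian scaling, the elementary potential-theoretic fact that the equilibrium potential of a nonpolar compact set is bounded away from $0$ on compact subsets of a domain containing it, and countable stability of Hausdorff dimension.
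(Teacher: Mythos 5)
Your overall strategy --- start from a champagne subdomain with unavoidable bubbles, shrink each bubble to a scaled copy of a fixed nonpolar Cantor compact, and transfer unavoidability by the comparison criterion (Proposition~\ref{unavoidable}, i.e.\ Proposition~\ref{AB-unavoidable},2) --- is exactly what the paper does (this is Proposition~\ref{F-replace} feeding into Theorem~\ref{main} and Corollary~\ref{loga2}), and your scaling/minimum-principle estimate on $\ov B(0,1)\subset B(0,1/\theta)$ is the same as the paper's estimate on $\ov B(0,1)\subset B(0,1+\ve)$. Your connectedness argument differs cosmetically: the paper builds the connectedness of $B(0,1)\sms F$ into the Cantor-type construction (Theorem~\ref{existence}), while you instead invoke the topological fact that a set of Hausdorff (hence topological) dimension $\le d-2 < d-1$ cannot disconnect a ball; both are valid.

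The genuine gap is the one you yourself flag and then set aside: you posit ``a compact nonpolar set $K$ of Cantor type having Hausdorff dimension $d-2$ (and, if $d=2$, logarithmic Hausdorff dimension $1$)'' and declare its existence ``the geometric ingredient announced in the abstract.'' That is precisely the nontrivial input, and asserting it is not the same as having it. Note in particular that ``Hausdorff dimension exactly $d-2$'' by itself is compatible with polarity: a compact set $K$ with $m_{t^{d-2}}(K)<\infty$ has dimension $\le d-2$ but is polar. What you actually need is the conjunction $m_{\kap}(K)=\infty$ (nonpolarity, which forces dimension $\ge d-2$) together with $m_{\kap^{1+\ve}}(K)=0$ for \emph{every} $\ve>0$ (which forces dimension $\le d-2$). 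The paper obtains this by applying Theorem~\ref{existence} not to the family $\phi_\ve=\kap^{1+\ve}$ one $\ve$ at a time (which would give a different $F_\ve$ for each $\ve$), but to a \emph{single} intermediate measure function
\[
\phi(t)= t^{d-2}\bigl(\log^+\tfrac1t\bigr)^{-1}\bigl(\log^+\log^+\tfrac1t\bigr)^{-2},
\]
which satisfies $\liminf_{t\to0}\phi(t)/\kap(t)=0$ (so Theorem~\ref{existence} applies and produces a nonpolar Cantor-type $F$ with $m_\phi(F)=0$ and $B(0,1)\sms F$ connected) and also dominates $\kap^{1+\ve}$ near $0$ for every $\ve>0$, so that $m_{\kap^{1+\ve}}(F)=0$ for all $\ve>0$. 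To make your proof complete you should replace the bare existence claim by exactly this invocation of Theorem~\ref{existence} with such a $\phi$; everything else in your write-up then goes through.
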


Let us note that smaller Hausdorff dimensions are not possible, since any set having strictly positive 
harmonic measure has at least Hausdorff dimension~$d-2$ and, if $d=2$, logarithmic
Hausdorff dimension~$1$ (see (\ref{infty}) and the subsequent lines).

A general equivalence  involving arbitrary measure functions $\phi$ is presented in Theorem \ref{main}.
Moreover, there are analogous results for Riesz potentials (isotropic $\a$-stable processes) on Euclidean space
(see Section \ref{section-riesz}).

On the other hand,   P.\,W.\  Jones and T.\,H.\ Wolff~\cite{jones-88}  proved 
that harmonic measures for planar domains are always living on sets of Hausdorff  dimension 
 at most~$1$. Later T.\,H.\ Wolff \cite{wolff-93} refined this 
by showing that there always exists a~set which has full harmonic measure and
$\sigma$-finite  $1$-dimensional measure.
 For simply connected domains, N.\,G.\ Makarov \cite{makarov-85-hausdorff} 
showed that any set of Hausdorff dimension  strictly less than $1$ has zero harmonic 
measure. In fact, he found an optimal measure function such that harmonic measures 
are  absolutely continuous with respect to the corresponding Hausdorff measure. 
Further results for planar domains may be found in \cite{carleson-73,oksendal-81,
oksendal-80, carleson-82, kaufman-82, makarov-85-hausdorff,carleson-85,pommerenke-86,
makarov-87-conformal,makarov-87-metric, 
bishop-90,volberg-93,jones-makarov-95,batakis-96,makarov-98}.
For higher dimensions $d\ge 3$,  J.\ Bourgain \cite{bourgain-87} proved that there exists an absolute 
constant $\gamma(d)  < d$ such that harmonic measures for open sets in $\reald$  always have full mass
on a~set of dimension at most $\gamma(d)$. As shown by T.\,H.\ Wolff \cite{wolff-95},
$\gamma(3)>2$.

In Section \ref{champagne-bal} we shall prove that champagne subsets with small unavoidable unions
of bubbles exist in very general settings, where we have a strictly positive Green function $G$
and a capacity function which is related to the behavior of $G$ close to the diagonal. This even
simplifies the construction given for Theorem \ref{main-old} (in the case, where $\lim_{t\to 0} h(t)=0$)
and has applications to large classes of elliptic second order PDE's as well as to Riesz potentials, 
isotropic unimodal L\'evy processes and censored stable processes (see Examples \ref{examples}).

For the convenience of the reader, we add an Appendix. In a first part we discuss  balayage spaces     
and explain their relationship with Hunt processes, sub-Markov semigroups, sub-Markov resolvents.   
In a second part we give a self-contained proof
for the construction of small nonpolar compact sets of Cantor type (Theorem \ref{existence}).

\thanks{We are indebted to Moritz Ka\ss mann for stimulating discussions  
and for having raised the question of an application to censored stable processes.}

\section{Unavoidable sets}\label{unav}

To work in reasonable generality let us consider a balayage space $(X,\W)$
such that the function $1$ is harmonic, and let 
$\mathfrak X=(\Omega,\mathfrak M, \mathfrak M_t, X_t,\theta_t,P^x)$ 
be an associated Hunt process  (see \cite{BH} or the Appendix).
This covers large classes of second order partial differential equations as well as   
non-local situations as, for example, given by Riesz potentials or censored stable
processes.

We shall proceed in such a way  that the reader who is mainly interested 
in classical potential theory may look at most of the following assuming that 
$X$ is a~connected open set $U$ in $\reald$, $d\ge 2$ ($\real^d\setminus U$
nonpolar if $d=2$),  $\W=\S^+(U)\cup\{\infty\}$, where $\S^+(U)$  is the set of all 
positive superharmonic functions on $U$, and $\mathfrak X$ is Brownian motion on $U$.

Let $\C(X)$ be the set of all continuous real functions on $X$ and let
$\K(X)$ denote  the set of all functions in  $\C(X)$ having compact support.
Moreover,  
let $\B(X)$ be the set of all Borel measurable numerical functions on $X$.
Given any set $\F$ of functions, let~$\F^+$ ($\F_b$, respectively)          
denote the set of all positive (bounded, respectively) $f\in \F$. 

We first recall some basic  notions and facts on balayage we shall need. For
numerical functions~$f$ on~$X$, let
$$
         R_f:=\inf \{u\in\W\colon u\ge f\}.
$$
In particular, for every  $A\subset X$ and $u\in\W$, let
$$
       R_u^A:=R_{1_Au}=
\inf\{v\in \W\colon v\ge u\mbox{ on } A\}.
$$
Let $\Px$ denote the set of all potentials in $\C(X)$, that is,
\begin{equation}\label{potential}
\Px:=\{p\in \W\cap C(X)\colon \inf
\{R_p^\vc\colon V\mbox{ relatively compact}\}=0\}.
\end{equation} 
Then $\W$ is the set of all limits of increasing sequences in $\Px$.   
A potential $p\in \Px$ is called \emph{strict} if any two measures $\mu,\nu$ on~$X$ satisfying $\int p\,d\mu=\int p\,d\nu<\infty$
and $\int q\,d\mu\le \int q\,d\nu$ for all $q\in\Px$, coincide.

For every $x\in X$ and $A\subset X$, 
there exists a unique measure $\vx^A$ on $X$
such that 
$$
          \int u \,d\vx^A= R_u^A(x), \qquad  \mbox{ for every }u\in \W.
$$
Of course, $\vx^A=\vx$,
if $x\in A$. We note that in \cite{BH} the measure $\vx^A$
is denoted by~${\overset \circ \ve}{}_x^A$, whereas there $\vx^A$ denotes the
\emph{swept} measure defined by $\int u \,d\vx^A=\hat R_u^A(x)
:=\liminf_{y\to x} R_u^A(y)$, $u\in \W$ (it coincides with
$\overset\circ\ve{}_x^A$, if $x\notin A$).

In terms of the associated
Hunt process, the measures $\vx^A$, $x\in X$ and 
$A$ Borel measurable,  are the distributions of the process
starting in $x$ at the time $T_A$ of the first entry into $A$
(which is defined  by
$T_A(\omega):=\inf\{t\ge 0\colon X_t(\omega)\in A\}$).
 
If we have to emphasize  the ``universe'' $X$ to avoid 
ambiguities, we shall add a~superscript $X$ and, for example,
write ${}^X\!R_1^A$ and  ${}^X\!\vx^A$ instead of $R_1^A$ and  $\vx^A$.

Given a  finite measure $\nu$ on $X$, let $\|\nu\|$ denote its total mass.    

\begin{definition}
Let $A$ be  a subset of $X$. It  is called \emph{unavoidable} {\rm(}in $X${\rm)}, if~$\|\vx^A\|=1$, for every $x\in X$, 
 or, equivalently, if          
\begin{equation}\label{def-unavoidable}
             R_1^A=1.       
\end{equation} 
In probabilistic terms {\rm(}if $A$ is Borel measurable{\rm)}:  The set $A$ is unavoidable, if 
\begin{equation}\label{def-prob}
  P^x[T_A<\infty] =1,\qquad\mbox{ for every }x\in X.
\end{equation} 

The set $A$ is called \emph{avoidable}, if $R_1^A$ is not identically $1$, that is {\rm(}provided $A$ is Borel measurable{\rm)},
 if there exists a point $x\in X$ such that  $ P^x[T_A<\infty] <1$. 
\end{definition} 

This is consistent with the definition we used           
in the Introduction, where we consider classical potential theory on $\reald$,  
a connected open set $U\ne\emptyset$, and a relatively closed subset $A$ of $U$.
Indeed, from a probabilistic point of view,
the consistency is obvious, since  $\|{}^U\!\vx^A\|$
is the probability that Brownian motion killed upon leaving $U$
enters $A$ during its lifetime.                                                           
For an analytic proof, we recall that, for  $x\in U\setminus A$,
\begin{eqnarray*} 
     {}^U\!R_1^A(x)
        &\!\!=\!\!&\inf\{ u(x)\colon u\in\S^+(U),
           \ u\ge 1 \mbox{ on }A\},\\[2mm]
     \mu_x^{U\setminus A}(A)
        &\!\!=\!\!& \inf\{v(x)\colon v\in \S^+(U\setminus A),\ 
     \liminf\nolimits_{y\to z}v(y)\ge 1
          \mbox{ for }z\in A\cap \partial (U\setminus A)\}
\end{eqnarray*} 
(cf.\ \cite[Chapter 6]{gardiner-armitage}). Hence, trivially,
 ${}^U\!R_1^A(x)\ge  \mu_x^{U\setminus A}(A)$. If 
$v\in \S^+(U\setminus A)$ such that $ \liminf\nolimits_{y\to z}v(y)\ge
1$ for every $z\in A\cap \partial (U\setminus A)$, then the
function, which is equal to~$1$ on~$A$ and equal to $\min\{1,v\}$
on~$U\setminus A$, 
is superharmonic on $U$. This yields the reverse inequality.       
Hence $R_1^A(x)=1$ if and only if $\mu_x^{U\setminus A}(A)=1$. 

Returning to the setting of the balayage space $(X,\W)$, where the function $1$ is harmonic,   
we observe some  elementary facts.

\begin{lemma}\label{simple}
\begin{enumerate}
\item
For every unavoidable  set  $A$ the following holds:
\begin{itemize}
\item[\rm(a)] Every   set $A'$ in $X$ containing $A$
   is unavoidable.
\item[\rm(b)] For every  relatively compact  set $F$ in $X$, the
  set $A\setminus F$ is unavoidable.
\item[\rm(c)] If $A$ is the union of relatively compact sets $F_n$, $n\in\nat$,
then, for all $x\in X$, the series $\sum_{n\in\nat} \|\vx^{F_n}\|$ diverges.
\end{itemize} 
\item For every relatively compact open set $V$ in $X$, the set
$X\setminus V$ is unavoidable. 
\end{enumerate}
\end{lemma}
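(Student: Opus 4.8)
The plan is to dispose of (1a) and (2) first, since both are immediate, then deduce (1c) from (1b), and finally spend the real effort on (1b). For (1a): if $A\subset A'$ then $1_A\le 1_{A'}$, so $R_1^A\le R_1^{A'}\le 1$, the last inequality because $1\in\W$; hence $R_1^A=1$ forces $R_1^{A'}=1$. For (2): the hypothesis that $1$ is harmonic amounts precisely to $\hat R_1^{V^c}=1$, and therefore $R_1^{V^c}=1$, for every relatively compact open $V$; that is, $X\setminus V$ is unavoidable.

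Granting (1b), part (1c) follows quickly. If $\sum_n\|\vx^{F_n}\|=\sum_n R_1^{F_n}(x)<\infty$ for some $x$, choose $N$ with $\sum_{n\ge N}R_1^{F_n}(x)<1$. By countable subadditivity of $f\mapsto R_f$ one gets $R_1^{\bigcup_{n\ge N}F_n}(x)\le\sum_{n\ge N}R_1^{F_n}(x)<1$, so $\bigcup_{n\ge N}F_n$ is avoidable, and hence so is its subset $A\setminus(F_1\cup\dots\cup F_{N-1})$ by (1a). But $F_1\cup\dots\cup F_{N-1}$ is relatively compact, so (1b) makes $A\setminus(F_1\cup\dots\cup F_{N-1})$ unavoidable — a contradiction.

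For (1b) the plan is as follows. By (1a) we may assume $F=V$ is relatively compact and open; fix a relatively compact open $V'$ with $\overline V\subset V'$, and note that $\dist(\overline V,X\setminus V')>0$ (as $\overline V$ is compact) and that $X\setminus V'$ is unavoidable by (2). Suppose, for contradiction, that $R_1^{A\setminus V}(x_0)<1$ for some $x_0$, and let $E:=\{T_{A\setminus V}=\infty\}$ under $P^{x_0}$, so $P^{x_0}(E)=1-R_1^{A\setminus V}(x_0)>0$. Set $\tau_1:=T_A$ and inductively $\eta_k:=\tau_k+T_{X\setminus V'}\circ\theta_{\tau_k}$, $\tau_{k+1}:=\eta_k+T_A\circ\theta_{\eta_k}$. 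Since $A$ and $X\setminus V'$ are unavoidable, an inductive use of the strong Markov property shows all these times are a.s.\ finite, hence strictly below the lifetime $\zeta$; and on $E$ one reads off that $X_{\tau_k}\in A\cap V$ (it lies in $A$ but, by definition of $E$, not in $A\setminus V$) and $X_{\eta_k}\in X\setminus V'$, whence on $E$ one has $\tau_1<\eta_1<\tau_2<\eta_2<\cdots$ strictly (for instance $X_{\eta_k}\notin A$ on $E$, forcing $\tau_{k+1}>\eta_k$).

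It then remains to contradict the existence, on the positive-probability event $E$, of such an infinite increasing sequence, and this last step is the crux. Put $T^*:=\sup_k\tau_k$. If $T^*<\infty$: since $\tau_k\uparrow T^*\le\zeta$ and likewise $\eta_k\uparrow T^*$, the left limit $X_{T^*-}$ exists ($\mathfrak X$ is a Hunt process), and it would simultaneously lie in the compact set $\overline V$ (being $\lim_k X_{\tau_k}$) and in the closed set $X\setminus V'$ (being $\lim_k X_{\eta_k}$) — impossible, since these sets are disjoint. If $T^*=\infty$: then $\zeta=\infty$ and the process visits the relatively compact set $\overline V$ at the times $\tau_k\to\infty$, contradicting transience of $\mathfrak X$ (a transient process a.s.\ eventually leaves every relatively compact set). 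Either way $P^{x_0}(E)=0$, a contradiction; hence $A\setminus V$, and therefore $A\setminus F\supset A\setminus V$ by (1a), is unavoidable. The whole difficulty is concentrated in ruling out that the excursion times accumulate: passing to the slightly larger auxiliary set $V'$ is what makes the left-limit dichotomy clean, and transience closes the remaining case.
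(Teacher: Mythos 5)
Your (1a) and (2) are fine, and (1c) is essentially the paper's argument in contrapositive form. For (2) you read it off directly from the meaning of ``$1$ is harmonic'' ($\|\vx^{V^c}\|=1$ for $x\in V$), whereas the paper deduces it from (1b) with $A=X$, $F=V$; both routes are legitimate. The real content is (1b), and here you take a genuinely different route. The paper's proof is a short analytic argument: pick $\vp\in\K(X)$ with $1_F\le\vp$ and set $p:=R_\vp\in\Px$; if $u\in\W$ with $u\ge 1$ on $A\setminus F$, then $u+p\ge 1$ on $A$, hence $u+p\ge R_1^A=1$, i.e., $u-1\ge -p$; since $1$ is harmonic, $u-1$ is hyperharmonic, and the minimum principle (a hyperharmonic function bounded below by the negative of a potential is nonnegative) yields $u\ge 1$. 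This is shorter and, crucially, applies to an arbitrary subset $A\subset X$, which is what the lemma asserts. Your probabilistic excursion argument (alternating entry times into $A$ and into $X\setminus V'$, followed by a dichotomy on $T^*=\sup_k\tau_k$) is sound in spirit, but it only works for Borel, essentially closed $A$: you use $X_{\tau_k}\in A$ and you conclude $T_A\circ\theta_{\eta_k}>0$ from $X_{\eta_k}\notin A$, both of which require $A$ closed (or at least require replacing $A$ by the set of its regular points and re-arguing). Moreover, the case $T^*=\infty$ quotes transience in the strong sense, that the process a.s.\ eventually leaves every relatively compact set. This does hold for the Hunt process associated to a balayage space, but it is itself essentially the probabilistic expression of the very minimum principle the paper uses, so asserting it without justification leaves a gap; one would argue, say, via a bounded strictly positive continuous potential $p$ for which $p(X_t)$ is a supermartingale converging to $0$, so that $\inf_{\ov V}p>0$ forces eventual departure from $\ov V$. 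So: a correct and instructive probabilistic alternative for closed Borel $A$, but longer and less general than the paper's minimum-principle proof, and the transience step needs to be pinned down explicitly.
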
 

\begin{proof} 1. (a) Trivial consequence of $R_1^A\le R_1^{A'}$.

(b) 
Let $\vp\in\K(X) $ 
such that $1_F\le \vp$. Then $p:=R_\vp\in \Px$ (see \cite[II.V.2]{BH}). 
Let $u\in \W$ such that $u\ge 1$ on $A\setminus F$. Then 
$u+p\in\W$ and $u+p\ge 1$ on $A$. So $u+p \ge R_1^A =1$,        
that is, $u-1\ge -p$. Since the function $1$ is harmonic, the function $u-1$ is hyperharmonic
and hence, by the minimum principle, $u-1\ge 0$ (see~\cite[III.6.6]{BH}). So $u\ge 1$ proving that $R_1^{A\setminus F}=1$.

(c) Let  $m\in\nat$ and let $B$ denote the union of all $F_n$, $n\ge m$.
By (b), the set $B$ is unavoidable. Hence $1=R_1^B\le \sum\nolimits_{n\ge m} R_1^{F_n}$.

2. A consequence of (b) taking $A=X$, $F=V$.
\end{proof}




Iterated application of Proposition \ref{AB-unavoidable},2 will help us to construct  small
 unavoidable sets.

\begin{proposition}\label{AB-unavoidable}
For all subsets $A,B$ of $X$ the following holds:\begin{itemize} 
\item[\rm 1.] $A$ is unavoidable or $\inf_{x\in X} R_1^A(x)=0$.
\item[\rm 2.]  
  If $A$ is unavoidable, $\kappa>0$,  and $ R_1^B\ge \kappa$ on $A$, then $B$ is unavoidable.
\item[\rm 3.] Suppose that  $(X,\W)$ has the \emph{Liouville property}, that is,
every bounded harmonic function on $X$ is constant. Then
\begin{itemize} 
\item[\rm (a)]  $A$ is avoidable if and only if $\hat R_1^A$ is a potential,
 \item[\rm (b)]   $A\cup B$ is avoidable if and only if $A$ and $B$ are avoidable.
\end{itemize} 
\end{itemize} 
\end{proposition}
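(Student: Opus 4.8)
The plan is to prove the three parts in order, each reducing to the minimum principle for hyperharmonic functions together with the characterization of potentials. For part 1, I would consider the function $u := \hat R_1^A$, which is the largest hyperharmonic minorant of $1$ supported (as a reduced function) on $A$; more precisely $u\in\W$ and $u\le 1$. Set $c := \inf_{x\in X} u(x)\in[0,1]$. The key observation is that $u-c$ is a nonnegative hyperharmonic function; on the other hand, since $1-c$ is harmonic (constants are harmonic) and $1-u$ is hyperharmonic, the function $(1-u)-(1-c) = c-u$ would be... — rather, I would argue via the reduced-function machinery: $R_1^A = R_{R_1^A}^A$ shows $u = R_u^A$, and if $u\ge c$ everywhere then $u-c\ge 0$ is hyperharmonic with $u - c = R_u^A - c \ge R_{u-c}^A$ on... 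The cleanest route: if $c>0$ then $u/c \ge 1$ on $A$, $u/c\in\W$ is a hyperharmonic (indeed superharmonic) function $\ge 1$ on $A$, so by minimality $u/c \ge R_1^A = u$, forcing $c\ge 1$, hence $c=1$ and $u\equiv 1$, i.e.\ $A$ is unavoidable. So either $c=0$ or $A$ is unavoidable.

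For part 2, suppose $A$ is unavoidable and $R_1^B\ge\kappa$ on $A$. Then $\kappa\inv R_1^B\in\W$ and $\kappa\inv R_1^B\ge 1$ on $A$, so by minimality $\kappa\inv R_1^B \ge R_1^A = 1$, i.e.\ $R_1^B\ge\kappa$ on all of $X$. Now $\inf_X R_1^B\ge\kappa>0$, so by part 1 the set $B$ cannot satisfy $\inf_X R_1^B=0$, hence $B$ is unavoidable. This is short; the only thing to check is that $R_1^B\in\W$, which holds because $1\in\W$ and reductions of elements of $\W$ stay in $\W$.

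For part 3, assuming the Liouville property: for (a), $\hat R_1^A\in\W$ and $\hat R_1^A\le 1$. If $A$ is avoidable, $\hat R_1^A$ is not identically $1$ (note $\hat R_1^A = R_1^A$ off $A$, and on $A$ it equals... here I use that $\hat R_1^A \le R_1^A \le 1$ and that avoidability gives $R_1^A \not\equiv 1$). I would decompose $\hat R_1^A = h + p$ into its harmonic (largest harmonic minorant) and potential parts (Riesz-type decomposition available in balayage spaces). Since $0\le h\le \hat R_1^A\le 1$, $h$ is a bounded harmonic function, hence constant by Liouville, say $h\equiv c$. If $c>0$ then $\hat R_1^A \ge c$, and running the part-1 argument with $\hat R_1^A$ in place of $R_1^A$ (they agree off $A$, and $R_1^A$ is the lower-semicontinuous regularization issue aside) forces $A$ unavoidable, contradicting avoidability; so $c=0$ and $\hat R_1^A = p$ is a potential. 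Conversely if $\hat R_1^A$ is a potential it is not identically $1$ (a nonzero constant is not a potential, since potentials have infimum $0$ over complements of relatively compact sets and $1$ is harmonic), so $R_1^A\not\equiv 1$ and $A$ is avoidable. For (b): if $A,B$ are avoidable then by (a) $\hat R_1^A, \hat R_1^B$ are potentials; since $\hat R_1^{A\cup B}\le \hat R_1^A + \hat R_1^B$ and a minorant in $\W$ of a potential is a potential, $\hat R_1^{A\cup B}$ is a potential, so $A\cup B$ is avoidable by (a); the converse direction is Lemma \ref{simple}(1a) (a subset of an avoidable set is avoidable, contrapositive of monotonicity of $R_1^A$).

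The main obstacle I anticipate is the bookkeeping around $\hat R_1^A$ versus $R_1^A$ (regularized versus unregularized reduction): these differ only on a semipolar subset of $A$, but I must be careful that the inequality $R_1^B\ge\kappa$ "on $A$" and the Riesz decomposition are applied to the right one of the two, and that "is a potential" for $\hat R_1^A$ is the correct characterization of avoidability rather than some statement about $R_1^A$. The rest is routine once the minimum principle (\cite[III.6.6]{BH}) and the existence of the harmonic-plus-potential decomposition in balayage spaces are invoked.
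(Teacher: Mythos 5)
There are two genuine gaps, both flagged by you but then incorrectly dismissed.

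\textbf{Part 1.} Your chain reads: $u/c\in\W$, $u/c\ge 1$ on $A$, hence $u/c\ge R_1^A$; and you then write $R_1^A=u$. But $u=\hat R_1^A\le R_1^A$, so what you actually get is $u/c\ge R_1^A\ge u$, i.e.\ $u(1/c-1)\ge 0$, which (since $u\ge c>0$) yields $c\le 1$ — not $c\ge 1$. This is the trivial direction and does not close the argument. Moreover, $c=\inf\hat R_1^A$ need not equal the quantity $\inf R_1^A$ in the statement. The paper's proof avoids both problems by never substituting $R_1^A$ (or $\hat R_1^A$) into the defining infimum: it sets $\gamma:=\inf R_1^A$, proves the identity $R_1^A=R_{1-\gamma}^A+\gamma$ by testing an arbitrary $u\in\W$ with $u\ge 1$ on $A$ (then $u-\gamma\in\W$, $\ge 1-\gamma$ on $A$, and conversely), and then uses the homogeneity $R_{1-\gamma}^A=(1-\gamma)R_1^A$ to get $\gamma R_1^A=\gamma$, whence $\gamma=0$ or $R_1^A\equiv 1$.

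\textbf{Part 2.} Your argument plugs $\kappa^{-1}R_1^B$ into the family defining $R_1^A$, and you justify this by asserting $R_1^B\in\W$. That is false in general: $R_1^B$ is a pointwise infimum of elements of $\W$ and need not be lower semicontinuous (e.g.\ $B$ a polar singleton gives $R_1^B=1_{B}$, not lsc); this is precisely why $\hat R_1^B$ is introduced. And you cannot simply replace $R_1^B$ by $\hat R_1^B$, since the hypothesis is $R_1^B\ge\kappa$ on $A$, and the two may disagree on $A\cap B$. The paper circumvents this by arguing on arbitrary competitors: for any $u\in\W$ with $u\ge 1$ on $B$, one has $u\ge R_1^B\ge\kappa$ on $A$, hence $u\ge R_\kappa^A=\kappa R_1^A$; taking the infimum over $u$ gives $R_1^B\ge\kappa R_1^A$, and $R_1^A\equiv 1$ finishes via part 1.

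\textbf{Part 3.} Your outline (Riesz decomposition of $\hat R_1^A$, Liouville forces the harmonic part to be a constant $c$, part 1 forces $c=0$ if $A$ is avoidable, subadditivity for (b)) is essentially the paper's argument and is fine, but it rests on a correct part 1. The converse of (a), which the paper leaves implicit, is handled cleanly in your write-up ($A$ unavoidable $\Rightarrow\hat R_1^A\equiv 1$, and a nonzero constant is not a potential).

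In short: the high-level plan is right, but the two steps where you substitute a reduced function itself into a reduced-function infimum are exactly where the $R$ versus $\hat R$ distinction bites, and the paper's proof is specifically structured to avoid ever doing that.
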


\begin{proof} 1.  Of course,  $\g:=\inf_{x\in X} R_1^A(x)\in [0,1]$. 
If $u\in \W$ such that $u\ge 1$ on $A$, then $u\ge R_1^A\ge \g$, hence $u-\g\in \W$ and $u-\g\ge 1-\g$ on $A$.
So $R_1^A -\g\ge  R_{1-\g}^A$. 
If~$u\in \W$ such that $u\ge 1-\g$ on $A$, then $u+\g\in \W$ and $u+\g\ge 1$ on~$A$.
Therefore $R_{1-\g}^A + \g\ge R_1^A$. Thus
\begin{equation}\label{gg1}
R_1^A=  R_{1-\g}^A +\g.
\end{equation} 
Since $R_{1-\g}^A= (1-\g) R_1^A$, (\ref{gg1}) shows that $ \g R_1^A=\g$.  So $\g=0$ or  $R_1^A= 1$. 

2.  Suppose that $R_1^B\ge \kappa>0$ on $A$, and let $u\in \W$ such that $u\ge 1$ on $B$.
Then $u\ge R_1^B\ge \kappa$ on $A$, and therefore $u\ge R_\kappa^A$. Hence $R_1^B\ge R_\kappa^A=\kappa R_1^A$.
If  $R_1^A=1$, we obtain that $R_1^B\ge \kappa$, and hence $B$ is unavoidable by (1).

3. (a)  Let $h$ be the greatest harmonic minorant of $\hat R_1^A$. Then $0\le h\le 1$ and $p:=\hat R_1^A-h$ is a potential.
By the Liouville property, $h$ is constant. If $A$ is avoidable, we hence see, by (1), that $h=0$ showing that
$\hat R_1^A$ is the potential $p$. 

(b)  If $A$ and $B$ are avoidable, then the  inequality  $\hat R_1^{A\cup B}\le \hat R_1^A +\hat R_1^B$ shows that $\hat R_1^{A\cup B}$
is a potential, and hence $A\cup B$ is avoidable, by (a). The converse is trivial, by (1a) in  Lemma \ref{simple}.
\end{proof}

For an application in classical potential theory (and for more
general harmonic spaces), the following simple consequence will be useful
in combination with further applications of Proposition \ref{AB-unavoidable},2.

\begin{proposition}\label{harmonic} 
Suppose that $V_n$, $n\in\nat$,
are relatively compact open  sets covering $X$ such that,
for every $x\in V_n$, the harmonic measure $\mu_x^{V_n}=
\vx^{X\setminus V_n}$ is supported by the boundary $\partial V_n$.
Then the union of all boundaries $\partial V_n$, $n\in\nat$,
is unavoidable. 
\end{proposition}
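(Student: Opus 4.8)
The plan is to reduce the statement to a single application of Proposition~\ref{AB-unavoidable},2, using the sets $V_n$ to build an unavoidable set on which the boundary union has uniformly positive reduced function. First I would introduce $B:=\bigcup_{n\in\nat}\partial V_n$ and try to show $R_1^B\ge\kappa$ on a suitable unavoidable set $A$ for some $\kappa>0$; the most natural candidate for $A$ is $X$ itself, but one must be careful that there is no uniform lower bound in general, so I expect to argue locally on each $V_n$ and then patch. The key observation is that, by hypothesis, for $x\in V_n$ the measure $\vx^{X\sms V_n}=\mu_x^{V_n}$ is carried by $\partial V_n\subset B$; since $\partial V_n$ is a subset of $X\sms V_n$ (it is in the closure but not in $V_n$, as $V_n$ is open), we get $R_1^{\partial V_n}(x)\ge \|\vx^{X\sms V_n}\|\cdot 1$ once we know $\|\vx^{X\sms V_n}\|$, hence by Lemma~\ref{simple},2 the set $X\sms V_n$ is unavoidable, so $\|\vx^{X\sms V_n}\|=1$, giving $R_1^{\partial V_n}(x)=1$ for every $x\in V_n$.

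Next I would combine these pointwise identities. Fix any $x\in X$; since the $V_n$ cover $X$, there is $n$ with $x\in V_n$, and then $R_1^B(x)\ge R_1^{\partial V_n}(x)=1$. Hence $R_1^B\equiv 1$ on all of $X$, which is exactly the definition (\ref{def-unavoidable}) of $B$ being unavoidable. In fact this shows we do not even need the full strength of Proposition~\ref{AB-unavoidable},2 with a general unavoidable $A$; the covering hypothesis lets us conclude $R_1^B=1$ directly. (Alternatively, one can phrase it as: $A:=X$ is trivially unavoidable, $R_1^B\ge 1$ on $A$ by the covering argument, so $B$ is unavoidable by Proposition~\ref{AB-unavoidable},2 with $\kappa=1$.)

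The one point that needs care — and what I'd flag as the main obstacle — is verifying $\partial V_n\subset X\sms V_n$ and that the harmonic measure $\vx^{X\sms V_n}$ really is what the hypothesis calls $\mu_x^{V_n}$, i.e.\ matching the balayage-space notation $\vx^{X\sms V_n}$ with the probabilistic ``exit distribution from $V_n$''. Since $V_n$ is open, $\partial V_n\cap V_n=\emptyset$, so $\partial V_n\subset X\sms V_n$; and the identification $\mu_x^{V_n}=\vx^{X\sms V_n}$ is precisely the convention recalled just before Lemma~\ref{simple} (the measure $\vx^A$ represents $u\mapsto R_u^A(x)$, and for $A=X\sms V_n$ this is the harmonic measure of $V_n$). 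Once the hypothesis guarantees this measure is supported on $\partial V_n$, we have $1=\|\vx^{X\sms V_n}\|=\vx^{X\sms V_n}(\partial V_n)\le R_1^{\partial V_n}(x)\le 1$, closing the argument. I would write this up in essentially four lines.
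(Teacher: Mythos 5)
Your proof is correct and follows essentially the same route as the paper: both establish $R_1^{\partial V_n}(x)=1$ for $x\in V_n$ and then invoke Proposition~\ref{AB-unavoidable},2 with $A=X$ and $\kappa=1$ (or observe directly that the covering gives $R_1^B\equiv 1$). The only difference is cosmetic: the paper obtains the intermediate equality via the sweeping identity $\vx^{\partial V_n}=(\vx^{X\setminus V_n})^{\partial V_n}=\vx^{X\setminus V_n}$ from \cite[VI.2.4, VI.9.4]{BH}, while you derive $R_1^{\partial V_n}(x)\ge\vx^{X\setminus V_n}(\partial V_n)=1$ by hand from the defining property $\int u\,d\vx^{X\setminus V_n}=R_u^{X\setminus V_n}(x)\le u(x)$ together with the support hypothesis and the trivial bound $R_1^{\partial V_n}\le 1$; your final paragraph contains the clean version of this and the slightly garbled ``hence'' in the middle paragraph is harmless.
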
 

\begin{proof} Given $x\in X$, there exists $n\in\nat$ such that $x\in
  V_n$, and then, by  \cite[VI.2.4 and VI.9.4]{BH},    
 $$
\vx^{\partial V_n}=(\vx^{X\setminus V_n})^{\partial    V_n}
  =\vx^{X\setminus V_n}.
$$ 
Since the measures $\vx^{X\setminus V_n}$ have total mass $1$, 
by Lemma \ref{simple}, an application of Proposition \ref{AB-unavoidable},2  (with $A=X$ and $\kappa=1$) finishes the proof.
\end{proof}

For Riesz potentials (isotropic $\a$-stable processes) on Euclidean space    
we obtain the following (the reader who is primarily interested in classical potential theory     
may pass directly to Section \ref{living}).

\begin{proposition}\label{riesz}
Let $X=\reald$, $d\ge 1$, $0<\a<\min\{d,2\}$, and let $\W$ be the
set of all increasing limits of Riesz potentials $G\mu\colon
x\mapsto \int |x-y|^{\a-d}\,d\mu(y)$ {\rm(}$\mu$~finite measure on
$\reald$ with compact support\,{\rm)}. Moreover, let 
$0<R_1<R_2<\dots$ be such that $\limn R_n = \infty$.
Then the following holds:
\begin{itemize} 
\item[\rm (1)] If $d\ge 2$ and $\a>1$, then the union of all
$\partial B(0,R_n)$, $n\in\nat$, is unavoidable.
\item[\rm (2)] For every $\delta>0$, the union of all shells
$B_n:=\{x\in \reald\colon R_n\le |x|\le (1+\delta)R_n\}$, $n\in\nat$,
is unavoidable.
\end{itemize} 
\end{proposition}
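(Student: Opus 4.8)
The plan is to derive both statements from Proposition \ref{AB-unavoidable},2 by exhibiting, for the radial process, a uniform lower bound on the relevant reduced function. Throughout, I would use that for isotropic $\a$-stable processes the exit distribution from a ball $B(0,R)$ has an explicit density (the Poisson kernel of the ball for the fractional Laplacian), and that the process, killed outside $X=\reald$ with $\W$ as described, has the scaling and rotational invariance of Riesz potentials.

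For part (1), with $\a>1$, I first note that the union $A$ of all spheres $\partial B(0,R_n)$ has to be compared against the full space. The key point is the classical fact, valid precisely when $\a>1$, that single points (and hence spheres, which are nonpolar) are hit: more to the point, I would show $R_1^A \ge \kappa$ on $X$ for a suitable $\kappa>0$ by a direct estimate. Concretely, take $A_0$ to be the union of the shells from part (2) (say with $\delta=1$); by part (2) that union is unavoidable, so by Proposition \ref{AB-unavoidable},2 it suffices to show $R_1^A \ge \kappa$ on each shell $B_n$, i.e. that from any point $x$ with $R_n\le|x|\le 2R_n$ the process hits some sphere $\partial B(0,R_m)$ with probability at least $\kappa$, uniformly in $n$ and $x$. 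By scaling I may rescale so that $R_n=1$; then this is a statement about a fixed compact annulus and finitely-or-infinitely many concentric spheres, and the probability of hitting, say, the unit sphere $\partial B(0,1)$ before escaping to infinity — or more simply the probability of hitting $\partial B(0,1)$ at all, using that $\a>1$ makes the radial coordinate a non-degenerate process that crosses levels — is bounded below. I would make this rigorous by an explicit harmonic-function (test-function) argument: exhibit $u\in\W$ with $u\le 1$ on $A$, comparable to a constant on the annulus, e.g. using the $\a$-harmonic function $x\mapsto |x|^{\a-d}$ off the origin together with Lemma \ref{simple}(2) to handle the behavior at $\infty$.

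For part (2), fix $\delta>0$ and let $B_n$ be the shell $\{R_n\le|x|\le(1+\delta)R_n\}$. Here I would again reduce to a scale-invariant statement: it suffices to show that there is $\kappa=\kappa(\delta)>0$ such that, for every $x\in\reald$ and every $R>0$,
\[
 P^x\bigl[\text{the process started at }x\text{ hits the shell }\{R\le|y|\le(1+\delta)R\}\bigr]\ge \kappa
 \quad\text{whenever } |x|<R \text{ or } |x|>(1+\delta)R,
\]
with the obvious modifications; combined with the trivial observation that $x$ lying in some shell already gives probability $1$, and with the fact that the shells $B_n$ exhaust large radii since $R_n\to\infty$, this yields $R_1^B\ge\kappa$ on all of $X$, whence $B$ is unavoidable by Proposition \ref{AB-unavoidable},1. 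To prove the displayed bound, I use the Poisson kernel of $B(0,R)$: for $|x|<R$ the exit position from $B(0,R)$ lands in the shell $\{R\le|y|\le(1+\delta)R\}$ with probability equal to the integral of the $\a$-stable Poisson kernel $P_R(x,y)\asymp \bigl(\tfrac{R^2-|x|^2}{|y|^2-R^2}\bigr)^{\a/2}|x-y|^{-d}$ over that shell; a short computation (which I will not carry out here) shows this integral is bounded below by a constant depending only on $\d$ and on the ratio $|x|/R$, and one reduces to $|x|/R$ bounded away from $1$ by first exiting a slightly smaller ball. For $|x|>(1+\delta)R$ one instead uses that the process, if it ever enters $B(0,(1+\delta)R)$, must cross the shell, together with transience to get a uniform lower bound on the probability of such entry (again via the explicit Green function / hitting probability of a ball, $P^x[T_{\ov B(0,\rho)}<\infty]=(\rho/|x|)^{d-\a}$ for $|x|>\rho$), being careful that $d-\a>0$.

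The main obstacle I anticipate is part (1): unlike Brownian motion, the $\a$-stable process has jumps and can overshoot a sphere, so "hitting $\partial B(0,R_m)$" is genuinely a statement about nonpolarity of a codimension-one set, and the constant $\kappa$ must be uniform over the (possibly badly spaced) radii $R_n$. I would handle this by not trying to hit a prescribed sphere, but rather any sphere in the family, and by exploiting the self-similarity to freeze one scale; the condition $\a>1$ enters exactly because it is equivalent to points being nonpolar for the one-dimensional trace, and I would cite the relevant capacity estimate (a sphere $\partial B(0,R)$ has strictly positive $\a$-capacity iff $\a>1$ when combined with its codimension) rather than reprove it. Part (2) is easier and is essentially a uniform two-sided estimate on the $\a$-stable Poisson kernel of a ball, which is standard.
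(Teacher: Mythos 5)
Your overall strategy (scaling invariance plus Proposition~\ref{AB-unavoidable},2) is the right one, and it matches the paper in spirit, but the execution contains genuine errors and is considerably more roundabout than it needs to be.

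The most serious problem is in part (2). You claim that if $|x|>(1+\delta)R$, then ``the process, if it ever enters $B(0,(1+\delta)R)$, must cross the shell.'' This intermediate-value reasoning is valid for Brownian motion but is simply false for the isotropic $\alpha$-stable process with $\alpha<2$: the paths are pure jump and can jump from $\{|y|>(1+\delta)R\}$ directly into $B(0,R)$, skipping the shell entirely. You correctly flag exactly this jump/overshoot issue as the obstacle in part (1), so it is surprising that the same point is overlooked here. A second problem is the claimed uniform bound $P^x[T_{B_n}<\infty]\ge\kappa$ for all $x$ with $|x|>(1+\delta)R_n$: the process is transient ($d>\alpha$), so this probability tends to $0$ as $|x|\to\infty$, and no such uniform constant exists. (Also, the formula $P^x[T_{\ov B(0,\rho)}<\infty]=(\rho/|x|)^{d-\alpha}$ is not exact for $\alpha<2$, only comparable; this is a minor point.)

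The paper avoids both pitfalls by never needing an estimate for $x$ \emph{outside} the relevant ball. The argument is: for every $x\in\reald$ there is some $n$ with $x\in B(0,R_n)$ (because $R_n\to\infty$); by scale and rotation invariance,
\[
\inf_{x\in B(0,R_n)}R_1^{B_n}(x)=\inf_{x\in B(0,1)}R_1^{\{1\le|\cdot|\le1+\delta\}}(x)=:\kappa,
\]
and $\kappa>0$ because the reduced function of a compact, regular (nonpolar with nonempty interior) set is a strictly positive continuous potential, hence has a positive minimum on $\ov B(0,1)$. So $R_1^B\ge\kappa$ on all of $X$, and Proposition~\ref{AB-unavoidable},2 applies with $A=\reald$. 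No Poisson kernel, no explicit hitting formula. For part (1) the same template is used directly (no detour through part (2)): here the paper invokes that $\partial B(0,1)$ is not $\alpha$-thin at any of its points when $\alpha>1$, which gives $R_1^{\partial B(0,1)}\in\mathcal P(X)$ continuous and strictly positive, and scaling does the rest. Your routing of part (1) through part (2) via Proposition~\ref{AB-unavoidable},2 is logically acceptable, but it adds a dependence that the direct argument makes unnecessary; you would be well served to replace your hands-on kernel computations with the abstract observation that $R_1$ of a nonpolar compact is bounded below on any compact, and to drop the ``must cross the shell'' step entirely.
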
  

\begin{proof} (1) The boundary $S:=\partial B(0,1)$, $n\in\nat$, is not ($\a$-)thin 
at any of its points (cf.~\cite[VI.5.4.4]{BH}),  and hence $R_1^S\in \W$ 
(in fact, $R_1^S\in \Px$). In particular,
\begin{equation*} 
     \kappa:=\inf\{R_1^S(x)\colon x\in B(0,1)\}>0.
\end{equation*} 
By scaling invariance, $\inf\{ R_1^{\partial B(0,R_n)}(x)\colon x\in B(0,R_n)\}=\kappa$, for every $n\in\nat$.
Since every $x\in\reald$ is contained in some $B(0,R_n)$ (and $\reald$ is unavoidable), 
we see, by~Proposition \ref{AB-unavoidable},2, that the union of all $\partial B(0,R_n)$,
$n\in\nat$, is unavoidable.

(2) By scaling invariance, for every $n\in\nat$,  
\begin{equation*} 
     \inf\{R_1^{B_n}(x)\colon x\in B(0,R_n)\}=     \inf\{R_1^{\{1\le |\cdot|\le 1+\delta\}} (x)\colon x\in B(0,1)\}>0.
\end{equation*} 
As in the proof of (1) we now obtain that the union of all $B_n$, $n\in\nat$, is unavoidable.
\end{proof}

\begin{remark}{\rm
If $\a\le 1$ ($\a<1$ for $d=1$), then, for every $R>0$,
the boundary $\partial B(0,R)$ is ($\a$-)polar and hence
$\vx^{\partial B(0,R)}=0$, for every $x\in \reald\setminus
\partial B(0,R)$ (cf.~\hbox{\cite[VI.5.4.4]{BH}}). So statement
of (1) in Proposition \ref{riesz} does not hold.
}
\end{remark} 

For a general balayage space,  we can still say the following.

\begin{proposition}\label{shells}
Let $(U_m)$ be an exhaustion of $X$,  that is,
let $(U_m)$ be a~sequence of relatively compact open sets in $X$ such that
$\ov U_m\subset U_{m+1}$, $m\in\nat$, and $X=\bigcup_{m\in\nat} U_m$.
Let $(k_n)$ be a sequence of natural numbers.

Then there exist $m_n\in\nat$  such that $m_{n}+k_n\le m_{n+1}$,
$n\in\nat$,
and the union~$B$ of the compact   ''shells'' 
$B_n:=\ov U_{m_{n+1}}\setminus  U_{m_n+k_n}$, $n\in\nat$,
is unavoidable.
\end{proposition}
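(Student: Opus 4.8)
The plan is to choose the indices $m_n$ inductively so that each shell $B_n = \ov U_{m_{n+1}} \setminus U_{m_n+k_n}$ captures a definite fraction of the mass of harmonic measure, no matter where one starts; the union will then be unavoidable by an iterated application of Proposition \ref{AB-unavoidable},2. The key point is that if $V$ is relatively compact and open with $\ov V \subset X$, then for a point $x$ far inside $X$ relative to $V$ — more precisely, for $x$ outside a sufficiently large relatively compact set — the measure $\vx^{X \sms V}$ has mass close to $1$: indeed, by Lemma \ref{simple},2 the set $X \sms V$ is unavoidable, so $R_1^{X\sms V} = 1$, and since $R_1^{X\sms V}$ is (the increasing limit of potentials, hence) finely continuous and equals $1$ identically, there is no issue of smallness — the mass $\|\vx^{X\sms V}\| = R_1^{X\sms V}(x) = 1$ for \emph{every} $x \in X$. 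So in fact the Brownian (Hunt) particle started anywhere eventually leaves every relatively compact open set.

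Concretely I would build the sequence $(m_n)$ as follows. Fix $m_1 = 1$. Having chosen $m_1 < \dots < m_n$, set $W_n := U_{m_n + k_n}$, a relatively compact open set. Since $\ov{W_n}$ is compact and $X = \bigcup_m U_m$, we have $\ov{W_n} \subset U_{m_{n+1}}$ for all sufficiently large indices; choose $m_{n+1} > m_n + k_n$ with this property, so that automatically $m_n + k_n \le m_{n+1}$ as required. Now I claim the shell $B_n = \ov{U_{m_{n+1}}} \sms W_n$ separates $W_{n+1} \supset \ov{W_n}$ from $X \sms U_{m_{n+1}}$ in the right sense: any path from a point of $W_n$ (in particular from any point of $B_{n-1}$, which lies in $\ov{U_{m_n}} \subset W_n$ once one checks $m_n \le m_n + k_n$) to the complement of $U_{m_{n+1}}$ must pass through the topological annulus $B_n$. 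This is where I would invoke the probabilistic picture together with the fact that $X \sms U_{m_{n+1}}$ is unavoidable: starting from $x \in B_{n-1} \subset W_n \subset \ov{U_{m_{n+1}}}$, the particle hits $X \sms U_{m_{n+1}}$ almost surely, and at the time of first exit from $U_{m_{n+1}}$ it lies on $\partial U_{m_{n+1}} \subset B_n$; hence $P^x[T_{B_n} < \infty] = 1$, i.e. $R_1^{B_n} = 1$ on $B_{n-1}$.

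Actually this already gives something stronger and cleaner than a uniform-$\kappa$ argument: each $B_n$ is \emph{itself} hit with probability one from every point of $B_{n-1}$. But to conclude that $B = \bigcup_n B_n$ is unavoidable I would argue as follows. Given any $x \in X$, pick $n_0$ with $x \in U_{m_{n_0}}$; then $x \in W_{n_0}$ (since $m_{n_0} \le m_{n_0} + k_{n_0}$), so by the argument above, starting from $x$ the particle exits $U_{m_{n_0+1}}$ a.s.\ and thereby enters $B_{n_0} \subset B$. Hence $P^x[T_B < \infty] = 1$ for every $x \in X$, which is exactly the definition of $B$ being unavoidable — no appeal to Proposition \ref{AB-unavoidable},2 is even needed, just Lemma \ref{simple},2 and the exit-boundary identity $\vx^{\partial U_{m_{n_0+1}}} = \vx^{X \sms U_{m_{n_0+1}}}$ (cf.\ the argument in Proposition \ref{harmonic}). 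The main obstacle, and the only thing requiring care, is the bookkeeping: verifying $m_n \le m_n + k_n \le m_{n+1}$ so that $\ov{U_{m_n}} \subset U_{m_n + k_n} = W_n$ and $\partial U_{m_{n+1}}$ lies in $B_n = \ov{U_{m_{n+1}}} \sms W_n$, which needs $W_n \cap \partial U_{m_{n+1}} = \emptyset$ — guaranteed because $\ov{W_n} \subset U_{m_{n+1}}$, an open set, so $\ov{W_n}$ is disjoint from $\partial U_{m_{n+1}}$. Everything else is the routine translation between "exits every relatively compact open set a.s." (Lemma \ref{simple},2) and "hits the topological shell enclosing it."
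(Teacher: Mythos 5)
Your argument has a genuine gap: it implicitly assumes the Hunt process exits a relatively compact open set by hitting its topological boundary, i.e.\ that $\vx^{X\setminus V}$ is supported on $\partial V$. That is true for Brownian motion and more generally in the setting of Proposition~\ref{harmonic}, where it is imposed as a hypothesis, but it is \emph{false} in a general balayage space. The point of Proposition~\ref{shells} is precisely to cover processes with jumps (Riesz potentials, isotropic $\alpha$-stable processes, censored processes), where the particle can leap from inside $U_{m_{n+1}}$ directly to a point far outside $\ov U_{m_{n+1}}$, bypassing your shell $B_n$ entirely. Your claim that ``each $B_n$ is itself hit with probability one from every point of $B_{n-1}$'' and the sentence ``at the time of first exit from $U_{m_{n+1}}$ it lies on $\partial U_{m_{n+1}}\subset B_n$'' are where the argument breaks. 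This is also reflected in your choice of $m_{n+1}$: you require only $\ov W_n\subset U_{m_{n+1}}$, which holds for \emph{any} $m_{n+1}>m_n+k_n$ by the exhaustion property, so you have placed no constraint on how thick $B_n$ is. With jumps, a thin shell can be jumped over with high probability, and no matter how you choose $m_{n+1}$ you cannot get hitting probability $1$.

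The paper's proof accepts this and aims only for a fixed positive lower bound. Setting $A_n:=\ov U_{m_n}$ and $V_n:=U_{m_n+k_n}$, it observes that the functions $h_m(x):=\vx^{X\setminus V_n}(\ov U_m\setminus V_n)$ are continuous on $V_n$ and increase to $\|\vx^{X\setminus V_n}\|=1$ (since $X\setminus V_n$ is unavoidable). By Dini on the compact $A_n$, one can choose $m_{n+1}\ge m_n+k_n$ with $h_{m_{n+1}}>1/2$ on $A_n$, which gives $\|\vx^{B_n}\|\ge \vx^{X\setminus V_n}(B_n)=h_{m_{n+1}}(x)>1/2$ for all $x\in A_n$. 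Since $\bigcup_n A_n=X$ is unavoidable, Proposition~\ref{AB-unavoidable},2 (with $\kappa=1/2$) finishes the proof. So, contrary to your remark, the appeal to Proposition~\ref{AB-unavoidable},2 is not optional — it is exactly what converts a uniform positive hitting probability into unavoidability. To repair your write-up you would need to (i) drop the claim of hitting probability one, (ii) choose $m_{n+1}$ using the monotone convergence $h_m\uparrow 1$ to guarantee a uniform lower bound on $\ov U_{m_n}$, and (iii) invoke Proposition~\ref{AB-unavoidable},2.
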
 

\begin{proof} We start with $m_1:=1$.
 Let $n\in\nat$ and suppose that $m_n$ has been
chosen. We define 
$$
           A_n:=\ov U_{m_n} \und  V_n:=U_{m_n+k_n}.
$$
The functions 
$h_m\colon x\mapsto \vx^{X\setminus V_n}(\ov U_m\setminus V_n)$,
 $m\ge m_n+k_n$, which are continuous  on $V_n$ (see \cite[VI.2.10]{BH}), 
are increasing to $1$. 
So there exists $m_{n+1} \ge m_n+k_n$ such that $h_{m_{n+1}}>1/2$ on the
compact $A_n$ in $V_n$, and hence  
$$
  \| \vx^{B_n}\|\ge \ve_x^{X\setminus V_n}(B_n)=h_{m_{n+1}}(x)>1/2, 
\qquad\mbox{ for every }x\in A_n.
$$
The claim of the proposition follows from 
Proposition \ref{AB-unavoidable},2, since
the union of the sets $A_n$, $n\in\nat$, is the whole space $X$, 
which, of course, is unavoidable.
\end{proof}

\section{Harmonic measures living on small sets}\label{living}

As in our Introduction, let $U\ne \emptyset$ be
a~connected open set in $\reald$, $d\ge 2$, such that 
$\real^d\setminus U$ is nonpolar if~$d=2$,
and let us consider classical potential theory on~$U$
(Brownian motion killed upon leaving $U$). 

\begin{proposition}\label{F-replace}
Let $U\setminus A$ be a champagne subdomain of $U$ such that 
$A$ is unavoidable in $U$. 
Then the following holds.
\begin{enumerate}
 \item[\rm (1)]
 For every   nonpolar compact  $F$  in $B(0,1)$,   the union $B$ of all 
sets $z+r_z F$, $z\in Z_A$, is relatively closed and unavoidable in $U$.
 \item[\rm (2)]
If $\b\in (0,1)$ and $B$ denotes the union of all $\ov B(z,\b r_z)$,
$z\in Z_A$, 
then $U\setminus B$ is a~champagne subdomain of $U$ such that 
$B$ is unavoidable.
\end{enumerate}
\end{proposition}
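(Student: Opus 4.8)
The plan is to deduce both assertions from the scaling‑invariant criterion in Proposition \ref{AB-unavoidable},2, applied "bubble by bubble". The key observation is that each bubble $\ov B(z,r_z)$, $z\in Z_A$, is (after translation and dilation by $r_z$) a copy of the unit ball $\ov B(0,1)$, and that $R_1^{\cdot}$ for Brownian motion on $U$ restricted to such a ball is governed by the \emph{same} quantity for Brownian motion on $\real^d$ (or on $B(0,1)$), because harmonic measure and reduced functions on the ball do not see $\uc$ — the ball lies at positive distance from $\partial U$ only up to the bound $\sup_{z} r_z/\dist(z,\partial U)<1$, which is exactly what we shall exploit to get a \emph{uniform} constant $\kappa$.

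For (1): fix a nonpolar compact $F\subset B(0,1)$. First I would argue that $B$ is relatively closed in $U$: since $Z_A$ is locally finite and each $z+r_zF\subset \ov B(z,r_z)$ with the balls $\ov B(z,r_z)$ pairwise disjoint and locally finite (and, if $U$ is unbounded, $r_z\to 0$), any point of $U$ has a neighbourhood meeting only finitely many of the sets $z+r_zF$, each of which is compact. Next, the crucial uniform lower bound: because $F$ is nonpolar, $\kappa_0:=R_1^F(0)>0$ where the reduced function is taken with respect to $\real^d$ (classical potential theory), equivalently $P^0[T_F<\infty]>0$; more generally, by compactness of $F$ and since $F$ is nonpolar it is nonthin at some point, and one checks $\inf\{{}^{\real^d}\!R_1^F(x)\colon x\in \ov B(0,1)\}=:\kappa_0>0$. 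By scaling and translation invariance of Brownian motion, for each $z$ we get $\inf\{{}^{\real^d}\!R_1^{z+r_zF}(x)\colon x\in \ov B(z,r_z)\}=\kappa_0$. Now I would compare ${}^{\real^d}\!R_1$ with ${}^U\!R_1$: since $z+r_zF\subset U$ and Brownian motion on $U$ is Brownian motion on $\real^d$ killed on leaving $U$, we have ${}^U\!R_1^{z+r_zF}\le {}^{\real^d}\!R_1^{z+r_zF}$ in general, which is the \emph{wrong} direction — so instead I would work on the ball: with $W_z:=B(z,\dist(z,\partial U))\subset U$ one has ${}^{W_z}\!R_1^{z+r_zF}\le {}^U\!R_1^{z+r_zF}$, and since $r_z/\dist(z,\partial U)\le c<1$ the set $z+r_zF$ sits well inside $W_z$, so scaling on the ball $W_z$ (a dilate of $B(0,1)$ by $\dist(z,\partial U)$) reduces the computation to $\inf\{{}^{B(0,1)}\!R_1^{(r_z/\dist(z,\partial U))F}(x)\colon |x|\le r_z/\dist(z,\partial U)\}$, and this is bounded below by $\inf\{{}^{B(0,1)}\!R_1^{tF}(x)\colon t\le c,\ |x|\le t\}=:\kappa>0$, the positivity being a continuity/monotonicity statement together with nonpolarity of $F$ and compactness in $t\in(0,c]$. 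Thus ${}^U\!R_1^B\ge {}^U\!R_1^{z+r_zF}\ge \kappa$ at every point of $\ov B(z,r_z)$, hence $R_1^B\ge\kappa$ on the unavoidable set $A=\bigcup_z\ov B(z,r_z)$, and Proposition \ref{AB-unavoidable},2 gives that $B$ is unavoidable in $U$.

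For (2): take $F:=\ov B(0,\b)$, which is a nonpolar compact subset of $B(0,1)$. Then the union $B$ of all $\ov B(z,\b r_z)=z+r_zF$ is, by part (1), relatively closed and unavoidable in $U$. It remains to see that $U\setminus B$ is a champagne subdomain, i.e. that $B$ is a locally finite union of pairwise disjoint closed balls with $\sup_z (\b r_z)/\dist(z,\partial U)<1$ and radii tending to $0$ if $U$ is unbounded: all of this is inherited from the corresponding properties of $\{\ov B(z,r_z)\}_{z\in Z_A}$ since $0<\b<1$ — disjointness and local finiteness are preserved on shrinking the balls (with the same centres $Z_B:=Z_A$), the ratio bound improves by the factor $\b$, and $\b r_z\to 0$ whenever $r_z\to 0$. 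Connectedness of $U\setminus B$ follows because $U\setminus B\supset U\setminus A$, which is connected and dense in $U\setminus B$ (each $\ov B(z,\b r_z)$ can be joined to $U\setminus A$ through the annular region $\b r_z<|x-z|<r_z$), so $U\setminus B$ is connected as well.

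\medskip

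The main obstacle I anticipate is the uniform positivity $\kappa>0$ in part (1): one must make sure the lower bound on $R_1^{z+r_zF}$ does not degenerate as the geometry of the bubbles varies over $z\in Z_A$. The bound $\sup_{z}r_z/\dist(z,\partial U)<1$ is precisely what prevents this — it confines, after rescaling, every configuration to the compact parameter range $t\in(0,c]$ inside the fixed ball $B(0,1)$, on which $t\mapsto \inf_{|x|\le t}{}^{B(0,1)}\!R_1^{tF}(x)$ is a strictly positive function bounded away from $0$ on $(0,c]$ (using that $tF$ is nonpolar for every $t>0$ and a Harnack/continuity argument near $t=0$). Once this uniform $\kappa$ is secured, everything else is a routine application of Proposition \ref{AB-unavoidable},2 together with elementary topological bookkeeping.
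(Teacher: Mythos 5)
Your proof follows the same core idea as the paper's: use scaling and translation invariance to get a uniform lower bound $R_1^{z+r_zF}\ge\kappa$ on $\ov B(z,r_z)$, and then invoke Proposition \ref{AB-unavoidable},2. The one place where your write-up has a real gap is the justification that $\kappa:=\inf\{{}^{B(0,1)}\!R_1^{tF}(x)\colon 0<t\le c,\ |x|\le t\}>0$, where $c:=\sup_{z}r_z/\dist(z,U^c)<1$. You appeal to ``compactness in $t\in(0,c]$'' and to ``a Harnack/continuity argument near $t=0$''; but $(0,c]$ is not compact, and $t\to 0$ is not where a degeneration could occur. The right observation is monotonicity: rescaling by $1/t$ gives
\[
\inf_{|x|\le t}{}^{B(0,1)}\!R_1^{tF}(x)=\inf_{|x|\le 1}{}^{B(0,1/t)}\!R_1^{F}(x),
\]
which is nonincreasing in $t$ (enlarging the domain only increases the probability of hitting $F$ before exit), so the infimum over $t\in(0,c]$ is attained at $t=c$ and equals $\inf_{|x|\le 1}{}^{B(0,1/c)}\!R_1^{F}(x)>0$ by nonpolarity of $F$. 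The paper sidesteps the one-parameter family entirely: it fixes a single $\ve>0$ with $(1+\ve)c<1$, so that $\ov B(z,(1+\ve)r_z)\subset U$ for every $z$, and sets $\kappa:=\inf\{{}^{B(0,1+\ve)}\!\hat R_1^F(x)\colon x\in\ov B(0,1)\}>0$; then every triple $\bigl(B(z,(1+\ve)r_z),\,z+r_zF,\,\ov B(z,r_z)\bigr)$ rescales to the \emph{same} reference configuration $\bigl(B(0,1+\ve),F,\ov B(0,1)\bigr)$ and no $t$-uniformity needs to be argued. That version is cleaner and worth adopting. Your treatment of relative closedness, of part (2), and of connectedness of $U\setminus B$ is correct and just makes explicit what the paper treats as obvious.
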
 

\begin{proof} Of course, (2) is a trivial consequence of (1).
So let $F$ be  a nonpolar compact in $B(0,1)$
and let $B$ be the union  of all compact sets $F_z:=z+r_zF$, 
$z\in Z_A$. 
Obviously,  $B$~is relatively closed in $U$. Since $\sup_{z\in Z_A} r_z/\dist(z,U^c)<1$,
there exists $\ve>0$ such that,  for every $z\in Z_A$,   
the closure of $B_z:= B(z,(1+\ve)r_z)$ is contained in $U$.
By \cite[Lemma~5.3.3 and Theorem~3.1.5]{gardiner-armitage},
$$
        \kappa:=\inf\{{}^{B(0,1+\ve)}\! \hat R_1^F(x)
               \colon x\in \ov B(0,1)\}>0.
$$
Let  $u$ be a positive  superharmonic function on~$U$ such that 
$u\ge 1$ on $B$. Then, for every $z\in Z_A$, $u\ge 1$ on 
$F_z:=z+r_zF$ and hence,  by scaling and translation invariance,  
$u\ge \kappa$ on $\ov B(z,r_z)$. Thus $u\ge \kappa$ on $A$. An application
of Proposition~\ref{AB-unavoidable},2 finishes the proof of (1).
\end{proof} 

For the existence of champagne subdomains of $U$ with unavoidable bubbles
which is  needed for an application of Proposition \ref{F-replace}, we could
use Theorem \ref{main-old}. However, let us note that using Proposition \ref{harmonic}, 
it is very  easy to  construct champagne subdomains $U\setminus B$, where
$B$ is unavoidable. Indeed, there exists $\kappa>0$ such that 
$$
            {}^{B(0,2)}\!R_1^{B(0,1/4)}\ge \kappa
               \on \ov B(0,1).
$$
Let $(V_n)$ be an exhaustion
of $U$ and     
$$
         \ve_n:=\frac 12\min\{\dist(\partial V_n, \partial V_{n-1}
                    \cup\partial V_{n+1}), 1/n\},\qquad n\in\nat
$$
(take $V_0:=\emptyset$). For every $n\in\nat$, we may choose a finite
set $Z_n$ in $\partial V_n$ such that the balls $B(z,\ve_n)$, $z\in
Z_n$, cover $\partial V_n$ and the balls $\ov B(z,\ve_n/4)$
are pairwise disjoint.
Let 
$$
A:=\bigcup\nolimits_{z\in Z_n, n\in\nat} \ov B(z,\ve_n) \und
B:=\bigcup\nolimits_{z\in Z_n, n\in\nat} \ov B(z,\ve_n/4).
$$
Then $A$ and $B$ are relatively closed in $U$ and
$U\setminus B$ is a champagne subdomain of $U$.
By Proposition \ref{harmonic} and Lemma \ref{simple}, 
$A$ is unavoidable.
Arguing similarly as in the proof of
Proposition \ref{F-replace}, we obtain that $u\ge \kappa$ on 
$A$, 
for  every positive superharmonic function $u$ on $U$ 
such that $u\ge 1$ on $B$. Hence, by Proposition \ref{AB-unavoidable},2,
$B$ is unavoidable in $U$.

Part (1) of Proposition \ref{F-replace} indicates that harmonic
measures may live on very small sets. To decide how small such
sets may really be  let us recall a few basic facts 
on measure functions  and Hausdorff measures.

Any  function $\phi\colon (0,\infty)\to (0,\infty]$ 
which is increasing and  satisfies 
 $\lim_{t\to 0}\phi(t)=0$ is called \emph{measure function}.
Given such a~function $\phi$ and a~subset~$E$ of~$\reald$, 
we define (cf.\  \cite[Definition 5.9.1]{gardiner-armitage})
$$
     M_\phi^{(\rho)}(E):=\inf\left\{\sum\nolimits_{n\in\nat} \phi(r_n)\colon E\subset 
                                 \bigcup\nolimits_{n\in\nat} B(x_n,r_n)\mbox{ and }r_n<\rho\mbox{ for each }n\right\},
$$
for $\rho\in (0,\infty)$, and 
$$
         m_\phi(E):=\lim\nolimits_{\rho\to 0} M_\phi^{(\rho)}(E).
$$
If $\phi,\psi$ are measure functions, then of course
\begin{equation}\label{pp}
                                                 m_\phi(E) \le m_\psi(E), \qquad \mbox { whenever  } \phi\le \psi\mbox{ on some interval } (0,\ve).
\end{equation} 

The \emph{Hausdorff dimension} of a  bounded set $E$ in $\reald$ is
the
 infimum of all $\g>0$ such that $m_{t^\g}(E)<\infty$
(it is at most $d$). Its \emph{logarithmic  Hausdorff dimension} is 
the infimum of all $\g>0$ such that $m_{h^\g}(E)<\infty$,
where $h(t):=(\log^+ \frac1t)\inv$ and, as usual, $\inf \emptyset :=\infty$.
If the logarithmic Hausdorff dimension of ~$E$  is finite, 
then $E$~has Hausdorff dimension~$0$.

Obviously,  $\kap$ is a measure function. If $V$ is an open set in $\reald$,
$x\in V$, and $E$~is a~Borel measurable set in $V^c$ such that
$\mu_x^V(E)>0$,
 then $E$ is nonpolar and hence
(cf.\ \cite[Theorem 6.5.5    and Theorem 5.9.4]{gardiner-armitage})
\begin{equation}\label{infty}
                                                       \mc(E)=\infty.
\end{equation} 
In particular, the Hausdorff dimension of $E$ is at least $d-2$ and, if $d=2$,  its logarithmic
Hausdorff dimension is at least $1$.

\begin{theorem}\label{main}
Let  $\phi$ be a~measure function.
Then the following statements are equivalent:   
\begin{itemize}
\item[\rm(i)]
 $\liminf_{t\to 0} \phi(t)/\kap(t)=0$.
\item[\rm(ii)]
There exists a relatively closed set $A$ in $U$ such that $m_\phi(A)=0$,
the open set $U\setminus A$ is connected, and  $\mu_x^{U\setminus A} (A)=1$, for every $x\in U\setminus A$.
\end{itemize} 
\end{theorem}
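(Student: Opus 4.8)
\smallskip

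The implication (ii)$\Rightarrow$(i) is the easy direction and I would handle it first. Suppose such a set $A$ exists. Fix $x\in U\setminus A$; then $\mu_x^{U\setminus A}(A)=1>0$, so $A$ is nonpolar and, by (\ref{infty}), $\mc(A)=\infty$. On the other hand, for a champagne-type covering argument: $m_\phi(A)=0$ means $A$ can be covered by balls $B(x_n,r_n)$ with $\sum_n\phi(r_n)$ arbitrarily small and $r_n\to 0$. If we had $\liminf_{t\to 0}\phi(t)/\kap(t)=c>0$, then $\kap(r_n)\le c^{-1}\phi(r_n)$ for all small $r_n$, so $\sum_n\kap(r_n)$ would also be arbitrarily small; but $\kap$ is (up to a constant) the capacity density, so $\sum_n\kap(r_n)$ controls the capacity of the covering and hence $\mc(A)\le C\sum_n\kap(r_n)<\infty$, contradicting $\mc(A)=\infty$. (This is exactly the content of ``(\ref{infty}) and the subsequent lines'': a set of finite $\kap$-Hausdorff measure is polar.) So (i) must hold.

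\smallskip

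For the main implication (i)$\Rightarrow$(ii), the plan is to iterate Proposition \ref{F-replace} together with the existence of small nonpolar compact Cantor-type sets (Theorem \ref{existence} in the Appendix). Start with any champagne subdomain $U\setminus A_0$ of $U$ with $A_0$ unavoidable (available from the construction via Proposition \ref{harmonic} recorded just after Proposition \ref{F-replace}, or from Theorem \ref{main-old}). Now, given (i), choose a sequence $t_k\to 0$ with $\phi(t_k)/\kap(t_k)\to 0$. The idea is to replace, in each bubble $\ov B(z,r_z)$, the ball by a scaled copy $z+r_z F_z$ of a nonpolar compact Cantor-type set $F_z\subset B(0,1)$ whose own covering number at the relevant scale is tiny. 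At stage $k$ we use Proposition \ref{F-replace}(1) to pass from the union of bubbles at stage $k-1$ to the union $B_k=\bigcup_z (z+r_z F^{(k)})$, which remains unavoidable; the point of Theorem \ref{existence} is that $F^{(k)}$ can be taken nonpolar yet covered by finitely many balls of radius $\sim t_k$ with total $\phi$-cost per bubble as small as we like. Taking $A:=\bigcap_k B_k$ (a nested intersection of relatively closed unavoidable sets, which stays unavoidable since each $B_k$ is closed and the intersection of a decreasing sequence of unavoidable closed sets in this transient setting remains unavoidable — here one uses a compactness/limit argument on the $R_1^{B_k}\equiv 1$, or alternatively arranges the construction so that a single $A$ works directly), one gets $m_\phi(A)=0$ by summing the per-bubble costs over $z\in Z_{A_0}$ with a convergent weighting, using that $\sum_z \kap(r_z)$ diverges but can be split so that at scale $t_k$ only finitely many bubbles are refined. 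The connectedness of $U\setminus A$ is preserved because each $A$ sits inside the original bubbles, which are separated, so removing them (or their Cantor subsets) from the connected champagne subdomain keeps it connected.

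\smallskip

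\textbf{Main obstacle.} The delicate point is the bookkeeping in the limit: one must refine infinitely many bubbles to smaller and smaller Cantor sets while (a) keeping the limiting set $A$ relatively closed in $U$, (b) keeping it unavoidable, and (c) making $m_\phi(A)=0$. For (b), the safe route is not to take a naive intersection but to do a single-pass construction: enumerate the bubbles, partition $Z_{A_0}$ into infinitely many infinite blocks, and on the $k$-th block replace $\ov B(z,r_z)$ by $z+r_z F^{(k)}$ where $F^{(k)}$ is a nonpolar Cantor set coverable by balls of radius $<t_k$ with $\sum$ of $\phi$-values less than $2^{-k}/(\text{card of relevant sub-block})$ — but since each block is infinite one must instead control $\sum_{z\in\text{block }k}\kap(r_z)\phi(t_{k,z})/\kap(t_{k,z})$ and use $\phi(t)/\kap(t)\to 0$ to beat the divergence of $\sum_z\kap(r_z)$; here one invokes that within block $k$ one may let the Cantor scale $t_{k,z}\to 0$ fast enough along $z$. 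The unavoidability of the resulting single set $A$ then follows in one stroke from Proposition \ref{F-replace}(1) applied blockwise together with Lemma \ref{simple} (an unavoidable set minus finitely many relatively compact pieces stays unavoidable, so finitely many unrefined or badly-refined bubbles do no harm). Verifying that this weighting can be arranged — i.e.\ that (i) genuinely suffices to overcome the necessary divergence of $\sum\kap(r_z)$ — is the crux, and it is precisely where the hypothesis $\liminf\phi/\kap=0$ (rather than $\lim$) is used, by exploiting the freedom to choose, for each bubble, its own good scale.
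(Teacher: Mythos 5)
Your (ii)\,$\Rightarrow$\,(i) argument is correct and matches the paper's: if $\phi/\kap\ge\delta$ near $0$ then $0=m_\phi(A)\ge\delta\,m_{\mathrm{cap}}(A)=\infty$ by (\ref{infty}), a contradiction.

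For (i)\,$\Rightarrow$\,(ii), however, you have overcomplicated the construction and in doing so opened a gap that you yourself flag but do not close. You propose an iteration (or a blockwise single pass) in which, at stage or block $k$, bubbles are refined by a Cantor set $F^{(k)}$ that is merely ``coverable by balls of radius $\sim t_k$ with small total $\phi$-cost,'' and you then worry correctly about whether the limiting set $A$ remains relatively closed and unavoidable; you concede that the bookkeeping needed to beat the divergence of $\sum_z\kap(r_z)$ is ``the crux'' and is not carried out. The decisive point you have missed is that Theorem \ref{existence} already produces, in one stroke, a \emph{single} nonpolar compact Cantor-type set $F\subset B(0,1)$ with $m_\phi(F)=0$ and $B(0,1)\setminus F$ connected -- the hypothesis $\liminf_{t\to 0}\phi(t)/\kap(t)=0$ is consumed entirely inside that appendix theorem. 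Once one has such an $F$, the proof is immediate: take any champagne subdomain $U\setminus B$ with $B=\bigcup_z\ov B(z,r_z)$ unavoidable (e.g.\ via Proposition \ref{harmonic}), set $A:=\bigcup_z(z+r_zF)$, and apply Proposition \ref{F-replace}(1) once (with the roles of $A$ and $B$ swapped) to see $A$ is relatively closed, unavoidable, and $U\setminus A$ is connected. The smallness $m_\phi(A)=0$ is automatic: $\phi$ is increasing and $r_z<1$, so $m_\phi(z+r_zF)\le m_\phi(F)=0$, and a countable union of $m_\phi$-null sets is $m_\phi$-null. No iteration, no intersection of unavoidable sets (which in any case need not be unavoidable), and no splitting of $Z$ into blocks is required. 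In short: your plan correctly identifies Proposition \ref{F-replace} and Theorem \ref{existence} as the tools, but you have not noticed how much Theorem \ref{existence} already delivers, and the surrogate iterative scheme you build in its place does not stand on its own.
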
 

\begin{proof} 1. Let us suppose first that (ii) holds.
Assuming that, for some $\ve,\delta>0$, $\phi/\kap \ge\delta$ on~$(0,\ve)$, we would
obtain, by (\ref{infty}),  that
$
      0= m_\phi(A)\ge \delta\, \mc (A)= \infty
$,
a contradiction.  Thus $\liminf_{t\to 0} \phi(t)/\kap(t)=0$, that is, (i) holds.

2. To prove that (i) implies (ii) we let $  h:= \phi/\kap$ and suppose 
\hbox{$\liminf_{t\to 0} h(t) =0$}. 
By Theorem \ref{existence}, there exists a nonpolar compact
 $F\subset B(0,1)$ of Cantor type such that 
$m_\phi(F)=0$ and $B(0,1)\setminus F$ is connected.

Let $B$ be any union of pairwise disjoint closed balls $B(z,r_z)$, $z\in Z$, in $U$
such that $U\setminus B$ is a champagne subdomain of~$U$ and $B$~is unavoidable.
Then the union~$A$ of all compact sets $z+r_zF$, $z\in Z$, is relatively closed in~$U$,
$U\setminus A$ is connected, and $A$ is unavoidable in~$U$, 
by Proposition \ref{F-replace} (roles of $A$ and $B$ interchanged).
\end{proof} 

Applying the implication (i)\,$\Rightarrow$\,(ii) to the measure function 
$$
\phi(t):=  t^{d-2} (\log^+ \frac 1t)\inv (\log^+\log^+ \frac 1t)^{-2}
$$
(with $\log^+ 0:=0$) we obtain a set $A$ such that 
$m_{\,\mbox{\small cap}^{1+\ve}}(A)=0$, $\ve>0$, since $\kap^{1+\ve}(t)\le \phi(t)$
for small $t$. Thus, by   (\ref{infty}), Theorem \ref{main} 
has the following immediate consequence.

\begin{corollary}\label{loga2}
There exists a relatively closed set $A$ in $U$ having the following properties:
\begin{itemize} 
\item The open set $U\setminus A$ is connected.
\item  For every  $x\in U\setminus A$, $\mu_x^{U\setminus A}(A)=1$.
\item The set $A$ has Hausdorff dimension $d-2$ and, if $d=2$,  logarithmic Hausdorff dimension $1$.
\end{itemize} 
\end{corollary}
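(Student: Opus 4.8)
The plan is to obtain the corollary as a direct application of Theorem \ref{main}: one only has to exhibit a single measure function $\phi$ for which the set $A$ furnished by the implication (i)$\Rightarrow$(ii) has, on inspection, exactly the Hausdorff dimension asserted. Following the computation preceding the statement, I would take
$$
\phi(t):= t^{d-2}\,(\log^+\tfrac1t)\inv\,(\log^+\log^+\tfrac1t)^{-2}
$$
(with $\log^+ 0:=0$). The first routine checks are that $\phi$ is a measure function — near $0$ it is a product of nonnegative increasing factors and $\phi(t)\to 0$ as $t\to 0$ — and that hypothesis (i) holds: dividing by $\kap$ one gets $\phi(t)/\kap(t)=(\log^+\tfrac1t)\inv(\log^+\log^+\tfrac1t)^{-2}$ when $d\ge 3$ and $\phi(t)/\kap(t)=(\log^+\log^+\tfrac1t)^{-2}$ when $d=2$, and in both cases this tends to $0$, so in particular $\liminf_{t\to 0}\phi(t)/\kap(t)=0$. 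Theorem \ref{main} then yields a relatively closed set $A$ in $U$ with $U\setminus A$ connected, $\mu_x^{U\setminus A}(A)=1$ for every $x\in U\setminus A$, and $m_\phi(A)=0$; the first two bullets of the corollary are thereby already in place.

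The remaining point is the dimension statement. The key elementary observation is that for every $\ve>0$ one has $\kap^{1+\ve}(t)\le \phi(t)$ for all sufficiently small $t$: for $d\ge 3$ this amounts to $(\log^+\tfrac1t)(\log^+\log^+\tfrac1t)^2\le t^{-(d-2)\ve}$, and for $d=2$ to $(\log^+\log^+\tfrac1t)^2\le (\log^+\tfrac1t)^{\ve}$, each of which holds near $0$ because a power of $1/t$, respectively a power of $\log\tfrac1t$, eventually dominates the left-hand side. Hence, by (\ref{pp}), $m_{\kap^{1+\ve}}(A)\le m_\phi(A)=0$ for every $\ve>0$. For $d\ge 3$, $\kap^{1+\ve}(t)=t^{(d-2)(1+\ve)}$, so $m_{t^\g}(A)=0$ for all $\g>d-2$, giving Hausdorff dimension at most $d-2$; for $d=2$, $\kap^{1+\ve}(t)=\bigl((\log^+\tfrac1t)\inv\bigr)^{1+\ve}$, so the logarithmic Hausdorff dimension of $A$ is at most $1$, whence also its Hausdorff dimension is $0=d-2$. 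For the matching lower bounds I would invoke (\ref{infty}): since $\mu_x^{U\setminus A}(A)=1>0$, the set $A$ is a nonpolar Borel subset of $(U\setminus A)^c$, so $\mc(A)=\infty$, and, as recorded right after (\ref{infty}), this forces the Hausdorff dimension of $A$ to be at least $d-2$ and, when $d=2$, its logarithmic Hausdorff dimension to be at least $1$. Combining the two sides gives exactly the values claimed. (If $U$ is unbounded and $A$ is not, nothing changes: $m_\phi$ and $m_{t^\g}$ are defined for arbitrary subsets of $\reald$, and $A$ is in any case a countable union of compacta of vanishing diameter, so its dimension equals the supremum of theirs.)

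As to where the difficulty lies, there is essentially no obstacle internal to the corollary — it is a bookkeeping exercise once Theorem \ref{main} is available, the only subtlety being not to forget the two-sidedness of the dimension estimate and hence the appeal to (\ref{infty}). The genuine work sits upstream: in Theorem \ref{main} itself, that is, in the unavoidability criterion Proposition \ref{AB-unavoidable},2 together with its transfer through the champagne bubbles in Proposition \ref{F-replace}, and in the construction of arbitrarily small nonpolar compact sets of Cantor type with connected complement (Theorem \ref{existence}), which is precisely what makes a measure function as thin as the $\phi$ above admissible.
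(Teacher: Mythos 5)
Your proof is correct and takes essentially the same route as the paper: the paper likewise applies Theorem \ref{main}(i)$\Rightarrow$(ii) to $\phi(t)=t^{d-2}(\log^+\tfrac1t)\inv(\log^+\log^+\tfrac1t)^{-2}$, notes that $\kap^{1+\ve}\le\phi$ near $0$ so that $m_{\kap^{1+\ve}}(A)=0$ for every $\ve>0$, and pairs this upper bound with the lower bound coming from (\ref{infty}). You merely spell out the small-$t$ comparison inequalities and the dimension bookkeeping in more detail than the paper's one-line derivation.
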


\section{Application to Riesz potentials}\label{section-riesz}

Let us next consider Riesz potentials (isotropic $\a$-stable processes) on Euclidean space,
that~is, let $X=\reald$, $d\ge 1$, $0<\a<\min\{d,2\}$, and let $\W$ be the
set of all increasing limits of Riesz potentials $G\mu\colon
x\mapsto \int |x-y|^{\a-d}\,d\mu(y)$ {\rm(}$\mu$~finite measure on
$\reald$ with compact support\,{\rm)}. The following result
extends Theorem \ref{main-old} and Theorem  \ref{loga} (in the case $U=\reald$)
to Riesz potentials.

\begin{theorem}\label{riesz-hausdorff}
\begin{itemize}
\item[\rm (1)] Let $\phi$ be a measure function. Then the following statements
are equivalent:
\begin{itemize}
\item [\rm (i)] $\liminf_{t\to 0} \phi(t) t^{\a-d}=0$.
\item [\rm (ii)] There exists a closed set $A$ in $\reald$ such that $\reald\setminus A$ is connected,     
\hbox{$m_\phi(A)=0$}, and $\|\vx^A\|=1$, for every $x\in \reald$.
\end{itemize} 
\item[\rm (2)] There exists a closed set $A$ in $\reald$ such that $\reald\setminus A$ is connected,
the Hausdorff dimension of $A$ is $d-\a$, and $\|\vx^A\|=1$, for every $x\in \reald$.
\end{itemize} 
\end{theorem}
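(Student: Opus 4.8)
The plan is to transport the proof of Theorem \ref{main} to the Riesz setting, with the classical capacity function replaced by $t\mapsto t^{d-\a}$, which reflects the behaviour of the kernel $|x-y|^{\a-d}$ near the diagonal. Two classical inputs must be replaced by their Riesz counterparts. The first is the analogue of (\ref{infty}): every non-($\a$-)polar Borel set $E$ in $\reald$ --- in particular, every unavoidable set --- satisfies $m_{t^{d-\a}}(E)=\infty$; this is the standard relation between $\a$-Riesz capacity and $(d-\a)$-dimensional Hausdorff measure (a set of finite $\mathcal H^{d-\a}$-measure is $\a$-polar). The second is the analogue of Proposition \ref{F-replace}: if $F\subset B(0,1)$ is a non-($\a$-)polar compact, then, since the Riesz kernel is strictly positive, $\kappa:=\inf\{R_1^F(x)\colon x\in\ov B(0,1)\}>0$, and the scaling of the kernel gives $R_1^{z+r_zF}(x)=R_1^F((x-z)/r_z)$. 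Hence, if $\reald\setminus B$ is a champagne subset with $B=\bigcup_{z\in Z}\ov B(z,r_z)$ unavoidable and $u\in\W$ satisfies $u\ge 1$ on $A:=\bigcup_{z\in Z}(z+r_zF)$, then $u\ge\kappa$ on each $\ov B(z,r_z)$, hence on $B$, so $R_1^A\ge\kappa$ on $B$ and $A$ is unavoidable by Proposition \ref{AB-unavoidable},2. Moreover, $A$ is closed (the balls being locally finite), $\reald\setminus A$ is connected provided $B(0,1)\setminus F$ is (the argument of Proposition \ref{F-replace}), and $m_\phi(A)=0$ whenever $m_\phi(F)=0$ and all $r_z\le 1$ --- for then $m_\phi(z+r_zF)=0$ for each $z$, by monotonicity of $\phi$, and one concludes by countable subadditivity of $m_\phi$.

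Granting this, part (1) follows at once. For (ii)$\Rightarrow$(i): an unavoidable $A$ is non-($\a$-)polar, so $m_{t^{d-\a}}(A)=\infty$; were (i) false we would have $\phi(t)\ge\delta\, t^{d-\a}$ on some $(0,\ve)$, whence $m_\phi(A)\ge\delta\, m_{t^{d-\a}}(A)=\infty$ by (\ref{pp}), contradicting $m_\phi(A)=0$. For (i)$\Rightarrow$(ii): put $h(t):=\phi(t)/t^{d-\a}$, so $\liminf_{t\to 0}h(t)=0$, and use Theorem \ref{existence}, with the capacity function $t\mapsto t^{d-\a}$, to obtain a non-($\a$-)polar compact $F\subset B(0,1)$ of Cantor type with $m_\phi(F)=0$ and $B(0,1)\setminus F$ connected. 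By the construction of Section \ref{champagne-bal}, which applies to Riesz potentials (cf.\ Examples \ref{examples}), there is a champagne subset $\reald\setminus B$ with $B=\bigcup_{z\in Z}\ov B(z,r_z)$ unavoidable and, $\reald$ being unbounded, $r_z\to 0$; after shrinking we may take $r_z\le 1$ for all $z$. Then $A:=\bigcup_{z\in Z}(z+r_zF)$ has the properties required in (ii).

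For part (2) I would apply the implication (i)$\Rightarrow$(ii) to
$$
\phi(t):=  t^{d-\a} (\log^+ \frac 1t)\inv (\log^+\log^+ \frac 1t)^{-2}
$$
(with $\log^+0:=0$), which satisfies $\lim_{t\to 0}\phi(t)\,t^{\a-d}=0$. This produces a closed $A\subset\reald$ with $\reald\setminus A$ connected, $\|\vx^A\|=1$ for all $x$, and $m_\phi(A)=0$. For every $\ve>0$ one has $t^{(d-\a)(1+\ve)}\le\phi(t)$ near $0$, hence $m_{t^{(d-\a)(1+\ve)}}(A)=0$ by (\ref{pp}) and the Hausdorff dimension of $A$ is at most $d-\a$; since $A$ is non-($\a$-)polar, $m_{t^{d-\a}}(A)=\infty$ forces it to be at least $d-\a$. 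Thus it equals $d-\a$.

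The obstacle lies not in the logic, which merely repeats that of Theorem \ref{main}, but in the two transfer steps: establishing the Riesz capacity / Hausdorff-measure relation behind the analogue of (\ref{infty}) --- the real content of the dimension lower bound --- and verifying that Theorem \ref{existence} and the champagne construction of Section \ref{champagne-bal} are stated with enough generality to be fed the capacity function $t\mapsto t^{d-\a}$; once this is granted, the analogue of Proposition \ref{F-replace} is routine, using only the strict positivity and scaling of the Riesz kernel. (For $d=1$, the connectedness assertion needs an obvious minor adaptation.)
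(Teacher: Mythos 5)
Your proposal is correct, and the logic matches the paper's for the direction (ii)$\Rightarrow$(i) and for part (2). The one place where you genuinely diverge from the paper is the construction of the unavoidable union of balls $B$ in (i)$\Rightarrow$(ii). You invoke Corollary \ref{a-stable} (equivalently, Theorem \ref{unavoidable-bal}, hence Theorem \ref{KZ}) from Section~\ref{champagne-bal}; the paper's proof of Theorem \ref{riesz-hausdorff} is instead self-contained within Section~\ref{section-riesz}, building $B$ directly: it starts from the unavoidable union of spheres (if $\a>1$) or shells (if $\a\le 1$) provided by Proposition \ref{riesz}, then uses scaling invariance and the lower bound $\kappa=\inf_{B(0,4)}R_1^{B(0,1)}>0$ to replace each sphere/shell by a locally finite union of small disjoint closed balls, via Proposition \ref{AB-unavoidable},2. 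This avoids the heavy quasi-additivity machinery of Theorem \ref{KZ} entirely. Your route is shorter if Corollary \ref{a-stable} is granted, but it reverses the paper's logical order (Section \ref{section-riesz} precedes Section \ref{champagne-bal}); the paper's route is more elementary and keeps the Riesz case independent of the balayage-space construction. Two smaller remarks: your care in arranging $r_z\le 1$ to deduce $m_\phi(z+r_zF)=0$ from $m_\phi(F)=0$ via monotonicity is warranted for a bare measure function (in the paper's direct construction this is automatic because $\b_n\le 1/(4n)$); and for part (2) the paper uses $\phi(t)=t^{d-\a}(\log^+\frac1t)^{-1}$, whereas your double-log choice is unnecessary here, though of course it also works.
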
 

\begin{proof}    
If $E\subset\reald$ is Borel measurable and  not ($\a$-)polar, then  $m_{t^{d-\a}}(E)=\infty$ 
 (see \cite[Theorem III.3.14]{landkof}), and therefore its  Hausdorff dimension is at least $d-\a$. 
Hence  the implication (ii)\,$\Rightarrow$\,(i) in (1) follows as in the proof of Theorem~\ref{main}.

To prove the implication (i)\,$\Rightarrow$\,(ii), let $B$ be any locally finite union of pairwise disjoint closed balls $\ov B(z,r_z)$ which
is unavoidable. Such a set $B$ is easily obtained. Indeed,  by \cite[V.4.6]{BH},
\begin{equation*} 
   \kappa:=\inf\{R_1^{B(0,1)}(x)\colon x\in B(0,4)\} >0.
\end{equation*} 
Let $0<R_1<R_2<\dots$ be such that $\limn R_n=\infty$. If $\a\le 1$, we assume that, for some $\delta>0$,
$(1+\delta)R_n<R_{n+1}$, $n\in\nat$, and define
\begin{equation*} 
    B_n:=\{x\in \reald\colon R_n\le |x|\le (1+\delta) R_n\}, \qquad n\in\nat.
\end{equation*} 
If $\a>1$, we omit $\delta$, that is, we define $B_n:=\partial B(0,R_n)$. 
By Proposition \ref{riesz}, the union of all $B_n$ is unavoidable (in $\reald$).  Next we fix
$$
      0<\b_n \le \frac 14\min\{\dist(B_n, B_{n-1}\cup B_{n+1}), 1/n\}
$$
(with $B_0:=\emptyset$)
and choose a finite set $Z_n$ in $B_n$ such that the balls $\ov B(z,\b_n)$, $z\in Z_n$,  are pairwise disjoint
and the balls $B(z,4\b_n)$ cover $B_n$. If $x\in B_n$, then there exists $z\in Z_n$ such that $x\in B(z,4\b_n)$,
and hence $    R_1^{B(z,\b_n)}(x)\ge \kappa$, 
by translation and scaling invariance. By Proposition \ref{AB-unavoidable},2, the union $B$ of all
$B(z,\b_n)$, $z\in Z_n$, $n\in\nat$, is unavoidable. 

Now let $\phi$ be any measure function such that $\liminf_{t\to 0} \phi(t)t^{\a-d}=0$.
There exists a compact $F\subset B(0,1)$ of Cantor type (such that $B(0,1)\setminus F$ is connected), 
 which is not $(\a$-)polar, but satisfies $m_{\phi}(F)=0$
(see Theorem \ref{existence}).  Then, by \cite[VI.5.1 and  V.4.6]{BH},
\begin{equation*}
           \kappa':=\inf\{R_1^F(x)\colon x\in B(0,1)\}>0.
\end{equation*} 
Let $F_z:=z+r_z F$, $z\in Z$.  
 By translation and scaling invariance, for every $z\in Z$,
\begin{equation*} 
            R_1^{F_z}\ge \kappa' \on B(z,r_z).
\end{equation*} 
The union $A$ of all $F_z$, $z\in Z$, is closed in $\reald$ and $\reald\setminus A$ is connected.
Clearly, $m_\phi(A)=0$ and $A$ is unavoidable, by Proposition \ref{AB-unavoidable},2. 

Taking $ \phi(t):=t^{d-\a}(\log^+\frac 1t)\inv$ we obtain that the Hausdorff dimension of $A$
is at most $d-\a$, and hence equal to $d-\a$.
\end{proof}

\section{Champagne subsets of balayage spaces}\label{champagne-bal}

In this section we shall prove results on champagne subsets
of balayage spaces with small unavoidable unions of bubbles. 
These results will have immediate applications to various classes of harmonic
spaces and to non-local theories as, for example, Riesz potentials
on $\reald$ and censored stable processes on open sets
in $\reald$.

Let $(X,\W)$ be a balayage space such that points are polar
and the function $1$ is harmonic. Let $\rho$ be a metric on~$X$
 which is compatible with the topology of $X$. 
For every $x\in X$ and $r>0$, we define the open ball of center $x$
and radius $r$ by
$$
           B(x,r):=\{y\in X\colon \rho(x,y)<r\}. 
$$
We suppose that, for every compact $K$ in $X$, 
there exist $0<a\le 1$ and  $\ve>0$ such that
\begin{equation}\label{rtwor}
           R_1^{B(x,r)\setminus \ov B(x,r/2)}(x)\ge a,\qquad
\mbox{ for all }x\in K\mbox{ and }0<r< \ve.
\end{equation} 
Further, we  assume that we have a lower
semicontinuous  numerical function \hbox{$G>0$} on $ X\times X$,
finite and  continuous 
off the diagonal, and, for some $\rho_0>0$,
 a strictly increasing continuous function $\kap$ on $\left(0,\rho_0\right]$
with $\lim_{r\to 0}\kap(r)=0$  such that the following holds:
\begin{itemize}
\item [\rm (i)]
   For every $y\in X$,  $ G(\cdot,y)\mbox{ is a potential with superharmonic support } \{y\}$.   
\item [\rm (ii)]
For every $p\in \Px$, there exists a measure $\mu$ on $X$    such that 
\begin{equation}\label{p-rep}
p=G\mu:=\int G(\cdot,y)\,d\mu(y).
\end{equation} 
\item [\rm (iii)]
There exists a constant $c\ge 1$, 
such that, for every compact $K$ in $X$, there exists $0<\ve\le \rho_0$    
satisfying 
\begin{equation}\label{G-local}
     c\inv \le   G(x,y)\cdot \kap(\rho(x,y))  \le c, \qquad\mbox{ for all } x\in
     K\mbox{ and } y \in B(x,\ve).
\end{equation} 
\item [\rm (iv)] Doubling property: There exists a constant $C>1$
  such that, for all $0<r\le  \rho_0$,   
\begin{equation}\label{doubling}
\kap(r)\le C\kap(r/2).
\end{equation}
\end{itemize} 

\begin{remarks}\label{G-remarks}{\rm
1. If harmonic measures for relatively compact open sets $V$ are supported by $\partial V$, then, by the minimum principle,
(\ref{rtwor}) holds with $a=1$.

2. By (i), the measure $\mu$ in (ii) is supported by the superharmonic support  of         
$p$, that is, by the smallest closed set such that $p$ is harmonic  on its complement. 

3. Suppose that (i) holds and that there exists a measure $\mu_0$ on $X$ such that,     
 for some sub-Markov resolvent $\vvl$ on $X$ with proper potential kernel $V_0$, we have $V_0f=G(f\mu_0)$,
$f\in\B^+(X)$,  and the set of all $\mathbbm V$-excessive functions is $\W$ (see Section  \ref{sec-bal}, in particular,
Theorem \ref{bal-res}). 
Then, by \cite{maagli-87}, for every $p\in \Px$, there exists a~\emph{unique} measure $\mu$ such that 
(\ref{p-rep}) holds.

4. Of course, it is sufficient to have (\ref{G-local}) for a sequence 
$(K_n)$ of compact sets covering $X$ such that each $K_n$
is contained in the interior of $K_{n+1}$.  

5. If there exist $c\ge 1$ and $\g>0$ such that, for every compact $K$ in $X$,
there exists $\ve>0$ satisfying
\begin{equation*} 
      c\inv \rho(x,y)^{-\g} \le   G(x,y)  \le c \rho(x,y)^{-\g}, \qquad\mbox{ for all } x\in
     K\mbox{ and } y \in B(x,\ve),
\end{equation*} 
then (iii) and (iv) hold with $\kap (r):=r^\g$ and $C=2^\g$.

6. Suppose that  there exists  $c_0>0$ such that, for all ~$x,y,z\in X$,  
\begin{equation*} 
          G(x,y)\le c_0 G(y,x) \und \min\{G(x,y),G(y,z)\}\le c_0 G(x,z). 
\end{equation*} 
Then there \emph{exists} a metric $\rho$ on $X$ and constants $c,\g>0$  (see \cite[Proposition 14.5]{heinonen},
\cite[pp.\ 1209--1212]{convexity}, \cite{H-uniform})
such that $\rho$  is compatible with the topology of $X$ and
\begin{equation*} 
                                    c\inv \rho(x,y)^{-\g} \le G(x,y) \le c \rho(x,y)^{-\g},\qquad\mbox{  for all } x,y\in X.
\end{equation*} 
 }
\end{remarks}

\begin{examples}\label{examples}{\rm 
Taking Euclidean distance $\rho$ 
in (1) -- (4), and (6):

1. $X\ne \emptyset$ connected open set in $\reald$, classical potential theory:
 
a) $d\ge 3$:
   $c=1+\eta$,  $\eta>0$ arbitrary
   ($c=1$ if~$X=\reald$), $0<\rho_0< \infty$ arbitrary,
   $\kap(r)=r^{d-2}$, and $C=2^{d-2}$.

b) $d=2$,   $\real^2\setminus X$ nonpolar:
       $c=C=1+\eta$,  $\eta>0$ arbitrary, $\rho_0=2^{-1/\eta}$,
    $\kap(r)=(\log (1/r))\inv$. 

2. $X=\reald$, $d\ge 1$, $0<\a<\min\{2,d\}$,
   Riesz potentials (isotropic $\a$-stable processes):
   $c=1$, $\kap(r)=r^{d-\a}$, $C=2^{d-\a}$.

3. $X\ne\emptyset$ bounded $C^{1,1}$ open set  in $\reald$, $d\ge 2$,
   $\a\in (1,2)$, censored $\a$-stable process on $X$:
$\kap(r)=r^{d-\a}$, $C=2^{d-\a}$ (see Section \ref{section-censored}).

4. Of course, harmonic spaces given by (locally) uniformly elliptic partial differential
operators of second order on open sets in $\reald$ are covered as well (having local comparison
of the corresponding Green functions with the classical one).

5. Examples, where the underlying topological space is still some $\real^m$,
but the metric is  no longer the Euclidean metric, are given by sublaplacians
on stratified Lie algebras (see \cite[Theorem 1.1]{han-hue-subl}, where, by \cite[Proposition 14.5]{heinonen},
 the quasi-metric $d_N$ is equivalent to a power of a metric).
Special cases    for such sublaplacians are the 
Laplace-Kohn operators on  Heisenberg groups $\mathbbm H_n$, $n\in\nat$
(see \cite[VIII.5.7]{BH}).

6. Finally, we note that, more generally than in  our second standard example,  
our assumptions are satisfied by  isotropic unimodal L\'evy processes
on $\reald$, $d\ge 3$, having a lower scaling property for the characteristic function $\psi$
(see Section \ref{levy}).
}
\end{examples}

Aiming at the result in  Theorem \ref{unavoidable-bal} we claim the following.  

\begin{theorem}\label{KZ}
Let $0<\kappa< (cC)^{-4}$,
$\eta\in (0,1)$, and $h\colon (0,1)\to (0,1)$ satisfying  
$\lim_{t\to 0} h(t)=0$.
Further, let  $K\ne \emptyset $ be a  compact in $X$
 and let $K'$ be a
compact neighborhood of $K$. 

Then there exist a~finite set $Z$ in $K'$ 
{\rm(}even in $K$, if $K$ is not thin at any of its points{\rm)}
and radii $0<r_z<\min\{\eta,\rho_0\}$, $z\in Z$, such that the following holds.
\begin{itemize}
\item The closed balls $\ov B(z,r_z)$, $z\in Z$,  are
 pairwise disjoint subsets  of $K'$.
\item The union $E$ of all $\ov B(z,r_z)$, $z\in Z$, satisfies
  $\|\vx^E\|\ge \kappa$, for every $x\in K$.\footnote{Let us
 note that, for $d=2$ in
 Example \ref{examples}.1, $\kappa$ may be as close to $1$ as we want.}
\item The sum $\sum_{z\in Z} \kap(r_z)\, h(r_z)$ is strictly smaller than
  $\eta$.
\end{itemize} 
\end{theorem}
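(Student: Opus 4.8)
The plan is to construct the set $Z$ and radii $r_z$ by a direct covering argument, using the local comparison (iii) of $G$ with $\kap$ and the doubling property (iv) to control the capacities of the balls, and using the non-thinness hypothesis (\ref{rtwor}) together with the representation property (ii) of potentials by $G$-potentials to guarantee that a suitable union of small balls carries harmonic mass at least $\kappa$ from every point of $K$. First I would replace $K'$ by a slightly smaller compact neighborhood $L$ of $K$ (still a neighborhood), fix the constants $0<a\le 1$ and $0<\ve\le\rho_0$ coming from (\ref{rtwor}) and (\ref{G-local}) applied to the compact $L$, and choose a small scale $\rho_1\le\min\{\eta,\ve\}$; all balls used will have radius below $\rho_1$, so their comparison constants are uniform. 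The key local estimate is: for $x\in L$ and $0<r<\rho_1$, the quantity $R_1^{\ov B(x,r)}(y)$ for $y$ near $x$ is comparable, via (iii) and (\ref{p-rep}), to $\kap(\rho(x,y))/\kap(r)$ up to the constant $c^2$ (this is the standard ``capacity of a ball'' computation: the equilibrium potential of $\ov B(x,r)$ equals $G\nu$ for the equilibrium measure $\nu$ supported on $\ov B(x,r)$, and $G(y,\cdot)$ on that ball is within $c$ of $1/\kap(\rho(x,\cdot))$, while $\|\nu\|\,\approx 1/\kap(r)$ because $R_1^{\ov B(x,r)}(x)=1$).

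Next I would build $Z$ as a maximal $2r$-separated set for a cleverly chosen radius function, or more simply: cover $K$ by finitely many balls $B(x_i,r_i)$, $i=1,\dots,N$, with $x_i\in K$ (or in $L$), with the $r_i$ chosen small enough that (a) $h(r_i)$ is tiny, (b) the closed balls $\ov B(x_i,r_i)$ lie in $L$, and (c) after discarding overlaps we still cover $K$. To arrange pairwise disjointness of the closed balls while keeping a positive mass estimate, I would use a Vitali-type selection: pass to a subcollection of pairwise disjoint balls $\ov B(z,5r_z)$, $z\in Z$, whose five-times dilates cover $K$; then $E:=\bigcup_{z\in Z}\ov B(z,r_z)$ is a disjoint union, and for $x\in K$ there is $z\in Z$ with $x\in B(z,5r_z)$, whence by the local capacity estimate and the doubling property (applied $\lceil\log_2 5\rceil$ times, costing a factor $C^{3}$) one gets $\|\vx^E\|\ge R_1^{\ov B(z,r_z)}(x)\ge (cC)^{-3}\,\kappa_0$ for an absolute $\kappa_0$; here I would also invoke (\ref{rtwor}) to handle the regime where $x$ sits in the annulus $B(z,5r_z)\setminus\ov B(z,r_z)$ rather than in $\ov B(z,r_z)$ itself, combining $R_1^{\ov B(z,r_z)}$ with the lower bound $a$ from the non-thinness on annuli. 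Tracking the constants so that the final lower bound is at least the given $\kappa<(cC)^{-4}$ is a bookkeeping matter, and the factor $(cC)^{-4}$ in the hypothesis is exactly what leaves room for it. The statement ``$Z$ may be taken in $K$ if $K$ is not thin at any of its points'' follows because then the centers $x_i$ can be chosen in $K$ and the relevant base estimate $R_1^{\ov B(x_i,r_i)}(x_i)=1$ together with non-thinness gives the annulus bound with centers on $K$; the footnote about $d=2$ being arbitrarily close to $1$ is because there $c=C=1+\eta$ can be taken arbitrarily close to $1$.

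Finally, the smallness of $\sum_{z\in Z}\kap(r_z)h(r_z)<\eta$ is obtained by choosing the radii $r_z$ small enough, \emph{depending on the running cardinality of $Z$}: since $\lim_{t\to 0}h(t)=0$, once we know (after the Vitali selection) that $|Z|=N$ and the radii lie below some threshold, we can a priori bound $\sum_{z\in Z}\kap(r_z)h(r_z)\le N\max_z\kap(r_z)\,\max_z h(r_z)$, and $\max_z h(r_z)$ can be forced below $\eta/(N\cdot\sup_{r<\rho_1}\kap(r))$ by shrinking $\rho_1$; but $N$ itself depends on $\rho_1$ through the covering, so one must order the quantifiers correctly. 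The clean way is: first fix a covering of $K$ by $N$ balls of a fixed small radius $r_0$ (determining $N=N(r_0)$), \emph{then} shrink each of those balls' radius uniformly to a common $r\le r_0$ small enough that $N\,\kap(r_0)\,h(r)<\eta$ --- note $\kap$ is increasing so $\kap(r)\le\kap(r_0)$ --- while re-running the Vitali selection at scale $r$ keeps the same upper bound $N$ on the count (a covering of $K$ by $r_0$-balls refines to one by $r$-balls of cardinality controlled by $N$ times a doubling constant of the metric space $L$, which I would absorb). The main obstacle I anticipate is precisely this interplay: we need the count $|Z|$ bounded \emph{before} we fix the radii (so that $\sum\kap(r_z)h(r_z)$ can be made small), yet the mass bound $\|\vx^E\|\ge\kappa$ must hold \emph{uniformly} in $x\in K$ regardless of how small the radii become --- and it does, because the local capacity estimate $R_1^{\ov B(z,r_z)}(x)\approx\kap(\rho(x,z))/\kap(r_z)$ is scale-invariant in the right way, with $\rho(x,z)\le 5r_z$ forcing the ratio to stay bounded below. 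Getting the constant accounting tight enough to land below $(cC)^{-4}$ is the delicate point, but the slack in the hypothesis is tailored to it.
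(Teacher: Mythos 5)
Your proposal takes a genuinely different route from the paper's --- a Vitali-type covering of $K$ by balls with a direct local ``capacity of a ball'' estimate, rather than the paper's approach of partitioning the equilibrium measure $\mu$ of a neighborhood of $K$ and matching the radii $r_z$ to the local masses $\mu(L_z)$. Unfortunately, two of the steps you flag as ``bookkeeping'' are in fact the crux, and as written they do not go through.

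First, the asserted local estimate $R_1^{\ov B(z,r)}(y)\approx \kap(r)/\kap(\rho(z,y))$ (note: not $\kap(\rho)/\kap(r)$ as you wrote) requires a \emph{lower} bound on the mass of the equilibrium measure $\nu_z$ of $\ov B(z,r)$. The reasoning you give --- ``$\|\nu_z\|\approx\kap(r)$ because $R_1^{\ov B(z,r)}(z)=1$'' --- yields only the \emph{upper} bound $\|\nu_z\|\le c\,\kap(r)$: indeed $1=G\nu_z(z)=\int G(z,y)\,d\nu_z(y)\ge c^{-1}\kap(r)^{-1}\|\nu_z\|$, since $\rho(z,y)\le r$ forces $G(z,y)\ge c^{-1}\kap(r)^{-1}$. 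Evaluating at $z$ gives no lower bound, because $G(z,\cdot)$ is unbounded near $z$; a priori $\nu_z$ could be concentrated near the center. In the general balayage setting, nothing in (iii)--(iv) or (\ref{rtwor}) rules this out directly. The paper circumvents this by using (\ref{rtwor}) together with the representation (\ref{p-rep}) to produce a measure $\mu_z$ supported on the \emph{annulus} $\ov B(z,r_z)\setminus B(z,r_z/2)$, where the two-sided comparison $c^{-1}\le G(z,y)\kap(\rho(z,y))\le c$ holds with $\rho(z,y)$ uniformly comparable to $r_z$, so that $a/2< G\mu_z(z)\le cC\kap(r_z)^{-1}\|\mu_z\|$ gives the needed lower bound on $\|\mu_z\|$. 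This annulus construction is essential and is missing from your argument.

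Second, your resolution of the quantifier interplay fails. Fixing a covering by $N$ balls of radius $r_0$ and then ``shrinking each radius uniformly to a common $r\le r_0$'' destroys the covering; re-running a Vitali selection at scale $r$ gives a count $N(r)\sim (r_0/r)^d N$, so $\sum_z\kap(r_z)h(r_z)\sim N(r)\,\kap(r)\,h(r)\sim N\,\kap(r_0)\,(r_0/r)^{\alpha}\,h(r)$ (taking $\kap(t)=t^{d-\alpha}$ for concreteness), which need not tend to $0$ since $h$ has no specified rate of decay. (The ``doubling constant of the metric space $L$'' you invoke is also not among the hypotheses.) The paper's decisive idea is to make $\sum_z\kap(r_z)$ independent of the covering scale $\beta$ by \emph{defining} $\kap(r_z)=(a\gamma\tau)^{-1}\mu(L_z)$, so that $\sum_z\kap(r_z)=(a\gamma\tau)^{-1}\mu(L)$ is a fixed number; smallness of $\sum\kap(r_z)h(r_z)$ then follows simply from making $\sup_{t<\beta} h(t)$ small. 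Your proposal has no analogue of this.

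Finally, even granting the annulus fix for the first gap, your approach yields a lower mass bound of roughly $a/(2c^2C^4)$, which involves the non-thinness constant $a\le 1$ and is strictly below $(cC)^{-4}$; the theorem claims every $\kappa<(cC)^{-4}$. The paper achieves this sharper constant precisely because its two-sided comparison $G\nu\approx G\mu$ with the equilibrium potential of $K'$ does not pick up the factor $a$ in the final estimate (it is absorbed into the choice of $r_z$, not into the mass bound). So the constant claim in the theorem is not reachable by your method either.
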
 

 Essentially, the idea for our proof 
is the following. Let $\mu$ denote the equilibrium measure for $K$.
For $\b>0$, we consider a partition of $K$ into finitely 
many Borel measurable sets $K_z$, $z\in Z$, such that  
$K\cap B(z,\b/3)\subset K_z\subset K\cap B(z,\b)$  
 and choose $0<r_z < \b/3$    
such that $\kap(r_z)$ is approximately $\mu(K_z)$ (possible if $\b$
is small).
Then the closed balls $\ov B(z,r_z)$ are pairwise 
disjoint and the sum $\sum_{z\in Z} \kap(r_z)h(r_z)$ is bounded by
a multiple of $\mu(K)\max_{z\in Z}h(r_z)$, which is smaller than $\eta$ provided
$\b$~is sufficiently small. 
For every $z\in Z$, we define
$\nu_z:=\mu(K_z)\|\mu_z\|\inv \mu_z$, where $\mu_z$ is the equilibrium measure
of $B(z,r_z)$. Let $\nu$ denote the sum of all $\nu_z$, $z\in Z$.
If we can show that $c_1 G\nu \le G\mu\le C_1 G\nu$ (which will require
some effort), we may conclude that
$$
        R_1^E\ge c_1 G\nu\ge c_1C_1\inv G\mu,
$$
where $G\mu=1$ on $K$, if $K$ is not thin at any of its points.

Since $K$ may not have this property and since we do not know
if the measures~$\mu_z$ have enough mass near the boundary of $B(z,r_z)$
(no problem,  if harmonic measures for open sets $V$ are supported by $\partial V$), we have to proceed in a 
more subtle  way.

In a way, our approach resembles to what has been done in \cite{aikawa-borichev}
to obtain a~result on  quasi-additivity of capacities.  
In \cite{aikawa-borichev}, however,  
given equilibrium measures on well separated small sets are spread out on larger balls to obtain 
a one-sided estimate between the corresponding potentials, whereas we cut a measure, given 
on a large set, into pieces, which are concentrated on small balls and lead to a two-sided
estimate.

\begin{proof}[Proof of Theorem \ref{KZ}]
We fix
$0<\ve<\dist(K,X\setminus K')/3$ and $0<a\le 1$ 
such that (\ref{rtwor}) and (\ref{G-local}) hold 
with $K'$ instead  of $K$.
If $K$ is not thin at any of its points,  let $\vp:=1_K$. If not, we choose $\vp\in\C(X)$ 
such that $1_K\le  \vp\le 1$ and the support of 
$\vp$ is contained in the $\ve$-neighborhood of $K$. 
Then $R_\vp$ is a continuous potential
which is equal to $1$ on $K$ and harmonic outside the support of $\vp$.
By (\ref{p-rep}), there exists a~measure~$\mu$ on~$X$ such that     
$G\mu=R_\vp$. The support $L$ of $\mu$ is contained
in the $\ve$-neighborhood of $K$; it is even a~subset of~$K$,
if $K$~is not thin at any of its points.

There exists  $\g>0$ such that 
\begin{equation}\label{def-g}
\kappa\le \frac {1-\g}{c^2C^2(\g cC+c^2C^2)}\, .
\end{equation}  
Let $\tau:=\inf_{x\in K'} G\mu(x)$. Then $0<\tau\le 1$ and  there exists
$0<R<\ve/3$ such that 
\begin{equation}\label{kato}
                 \sup\nolimits_{0<t<R}   h(t)\le  \frac
                              {a\g\tau \eta}{\mu(L)}
 \und            
  \sup\nolimits_{x\in L} G(1_{B(x,R)}\mu)\le \tau':=
                     \frac {a\g\tau} {c\, C^{3}},
\end{equation} 
where the second inequality follows from the fact that
$G\mu\in\C_b(X)$, and hence $\mu$
does not charge points (which are polar).
(Indeed, for every $x\in L$, there exists $0<s_x<\ve/3$
such that $G(1_{B(x,2s_x)}\mu)\le \tau'$. There exist $x_1,\dots,x_m\in
L$ such that the balls $B(x_1,s_{x_1}),\dots, B(x_m,s_{x_m})$
cover $L$.
Let $R:=\min\{s_{x_1},\dots,s_{x_m}\}$.
Given $x\in L$, there exists $1\le j\le m$ such that $x\in
B(x_j,s_{x_j})$, hence $B(x,R)\subset B(x_j,2s_{x_j})$, 
and therefore $G(1_{B(x,R)}\mu)\le \tau'$.)

Since, by assumption, $G>0$ and $G$ is continuous off the diagonal, there exists
$\delta>0$ such that,  for all $x\in K'$,
\begin{equation}\label{G-global'}
       G(x,y)\le C G(x,y'),\quad\mbox{ if } y,y'\in
       K'\setminus B(x,R)\mbox{ and }\rho(y,y')<\delta.
\end{equation} 
Finally, let 
$$
\b:=(1/2)\min\{\delta, R, \eta, \rho_0\}. 
$$
 There exists a finite set $Z$ in $L$ such that the
balls $B(z,\b/3)$, $z\in Z$, are pairwise disjoint and the balls 
$B(z,\b)$  cover $L$. There exists a partition of $L$ into
Borel measurable sets $L_z$, $z\in Z$, such that 
\begin{equation}\label{kz}
             L\cap B(z,\b/3)\subset L_z \subset L\cap B(z,\b).
            \end{equation} 
Indeed, let $Z=\{z_1,\dots,z_M\}$ and, for every $1\le j\le M$,
let $P_j':=L\cap B(z_j,\b/3)$, $P_j'':=L\cap B(z_j,\b)$, and let $L'$
be the union of the pairwise disjoint sets $P_1',\dots,P_M'$.
 We recursively define $L_{z_1},\dots, L_{z_M}$ by $L_{z_1}:=P_1'\cup
 (P_1''\setminus L')$ and
$$
       L_{z_j}:=(P_j'\cup (P_j''\setminus L'))\setminus 
(L_{z_1}\cup\dots\cup L_{z_{j-1}}), \qquad 1<j\le M.
$$

For the moment, let us fix $z\in Z$. Since 
$1\le c G(z,\cdot)\kap(\b)$ on $B(z,\b)$ by (\ref{G-local}), 
we see, by  (\ref{kato}) and (\ref{doubling}), that
$$
       (a\g\tau)\inv  \mu(B(z,\b))\le (a\g\tau)\inv c\, \kap(\b)
      G(1_{B(z,\b)}\mu)(z) \le C^{-3} \kap  (\b)\le \kap(\b/8). 
$$
So $(a\g\tau)\inv \mu(L_z)<\kap(\b/8)$, by (\ref{kz}), and hence
there exists (a unique)
\begin{equation}\label{def-rz}
    0<r_z<\b/8 \quad\mbox{ such that } \quad
             \kap(r_z)=(a\g\tau)\inv \mu(L_z).
\end{equation} 
In particular, by (\ref{kato}),
$$
         \sum\nolimits_{z\in Z} \kap(r_z) \, h(r_z)
         < \frac{\eta}{\mu(L)}\,  \sum\nolimits_{z\in Z} \mu(L_z) =\eta.
$$

By (\ref{rtwor}), for every $z\in Z$, there exists 
$\vp_z\in \K^+(X)$ such that $\vp_z\le 1_{B(z,r_z)\setminus \ov B(z,r_z/2)}$
and $R_{\vp_z}(z)>a/2$. Since $R_{\vp_z}\in \Px$ and $R_{\vp_z}$ is harmonic outside the compact 
$A_z:=\ov B(z,r_z)\setminus B(z,r_z/2)$, 
there exists a~measure $\mu_z$ on $A_z$  such that   
\begin{equation}\label{mz}
                G\mu_z=R_{\vp_z}.
\end{equation} 
For every $y\in A_z$,
$G(z,y) \kap(r_z) \le C G(z,y)\kap (r_z/2) \le cC$,
and hence,  by~(\ref{def-rz}),   
$$
\frac {\mu(L_z)} {2\g\tau}=\frac {a\kap(r_z)}2< G\mu_z(z) \kap(r_z)= \int
G(z,y)\kap(r_z)\,d\mu_z(y)\le c C \|\mu_z\|.
$$
Let 
\begin{equation}\label{def-nz}
      \nu_z:=\mu(L_z)\,\|\mu_z\|\inv \mu_z.
\end{equation} 
Then $G\nu_z\le 2\g\tau cC G\mu_z\le 2\g\tau cC\le 2\g cC G\mu$ on $K'$.
By the minimum principle, 
\begin{equation}\label{nmG}
 G\nu_z\le  2\g c C G\mu.
\end{equation} 
 Defining $\nu:=\sum_{z\in Z} \nu_z$
we claim that 
\begin{equation}\label{munu}
(2\g c C+c^2C^2)\inv   G\nu \le   G\mu \le (1-\g)\inv c^{2}C^2 G\nu.
\end{equation} 
Having (\ref{munu}), the proof of the proposition is easily finished.
Indeed, the measure $\nu$ is supported by the union $E$
of all closed balls $\ov B(z,r_z)$, $z\in Z$, and $G\nu$
is continuous. Since $G\mu\le 1$, the first inequality  and the minimum principle hence yield that 
\begin{equation}\label{nu-E}
     (2\g cC+ c^2C^2)\inv    G\nu\le R_1^E. 
\end{equation} 
Finally, (\ref{def-g}),  the second inequality of (\ref{munu}), and (\ref{nu-E})  imply that 
$$
  \kappa  1_K  \le  \kappa G\mu\le \frac{1-\g}{c^2C^2  (2\g cC+ c^2C^2)}\, G\mu
\le R_1^E.
$$

So it remains to prove that (\ref{munu}) holds. 
To that end,  let us fix $z\in Z$ and define
$$
       V:=B(z,R), \qquad \mu':=1_{L\setminus V}\mu.
$$
By (\ref{kato}) and the minimum principle, $G(1_V\mu)\le \g  G\mu$, and hence
\begin{equation}\label{mu'}
          G\mu'\ge  (1-\g) G\mu.
\end{equation} 
Let  $x\in B(z,\b)$, $z'\in Z\setminus \{z\}$, and  $y,y'\in L_{z'}$.
Then $\rho(y,y')<2\b\le\delta$. 
If $L_{z'}\cap B(x,R)=\emptyset$,  we hence conclude, by
(\ref{G-global'}), that
$$
       G(x,y)\le C G(x,y').
$$
Let us suppose now that  $L_{z'}\cap B(x,R)\ne \emptyset$. Then
$\max\{\rho(x,y),\rho(x,y')\}< R+2\b$, hence $y,y'\in B(x,\ve)$.
If $y\in L_{z'}\setminus V$, then $\rho(x,y)\ge \rho(z,y)-\rho(x,z)\ge R-\b\ge \b$,
and hence $\rho(x,y')< \rho(x,y)+\rho(y,y')< 3\rho(x,y)$. 
If $x\in \ov B(z,r_z)$ and $y\in \ov B(z',r_{z'})$, then  $\rho(x,y')\le 4 \rho(x,y)$, since $\rho(x,z')\ge (\frac 23-\frac 18)\b=
\frac {13}{24}\b$,
$ \rho(x,y')\le \rho(x,z')+ \b\le \frac{37}{13}\rho(x,z')$, and $\rho(x,y)\ge \rho(x,z')-\frac 18\b\ge \frac {10}{13}\rho(x,z')$.
By the monotonicity of $\kap$ and (\ref{doubling}),  in~both cases
$$
\kap(\rho(x,y'))\le \kap(4\rho(x,y))\le C^2\kap(\rho(x,y)),
$$
 and therefore, by (\ref{G-local}) (with $K'$ in place of~$K$), 
$$
    G(x,y)\le c \bigl(\kap(\rho(x,y))\bigr)\inv
      \le c C^2\bigl(\kap(\rho(x,y'))\bigr)\inv
    \le c^2C^2 G(x,y').
$$

By integration, we conclude that 
\begin{equation}\label{z-mu}
G(1_{L_{z'}\setminus V}\mu)\le c^2C^2\,\frac
{\mu(L_{z'}\setminus V)}{\|\mu_{z'}\|}\, G\mu_{z'} \le c^2C^2
G\nu_{z'}  \on B(z,\b),
\end{equation} 
and
\begin{equation}\label{z-nu}
G\nu_{z'}\le c^2C^2 G(1_{L_{z'}}\mu)\on \ov B(z,r_z).
\end{equation} 
Summing (\ref{z-mu}) over all $z'\in Z\setminus \{z\}$ we obtain that $G\mu'\le c^2C^2 G\nu$ on $B(z,\b)$
and hence, by (\ref{mu'}), $G\mu\le (1-\g )\inv c^2C^2 G\nu$ on $B(z,\b)$. Since the balls $B(z,\b)$, $z\in Z$,
cover the support $L$ of $\mu$, an application of the the minimum principle yields the second inequality of~(\ref{munu}).
Summing (\ref{z-nu}) over all $z'\in Z\setminus \{z\}$ and using (\ref{nmG}), we see that
$ G\nu\le 2\g cC G\mu+c^2C^2 G\mu'\le( 2\g cC +c^2C^2)G\mu$ on $\ov B(z,r_z)$. By the minimum
principle, the second inequality of  (\ref{munu}) follows.

\end{proof}

In the classical case (see Example \ref{examples}.1), Theorem \ref{KZ}
implies an improved version of \cite[Theorem 1.1]{HN-champagne} (recalled in this paper also   
as  Theorem  \ref{main-old}) 
in the (most natural) case, where $\lim_{t\to 0} h(t)=0$.

\begin{corollary}\label{corollary-classic}
Let $U$ be a nonempty connected open set in $\reald$, $d\ge 2$,
such that $\reald\setminus U$ is nonpolar if $d=2$.
Let $(V_n)$ be an exhaustion of $U$ by relatively compact 
open sets $V_n$, $n\in\nat$, such that $\partial V_n$ is not thin at any of its points.
Finally, suppose that  $h\colon (0,1)\to (0,1)$ satisfies 
\hbox{$\lim_{t\to 0} h(t)=0$}, and let
$\psi\in \C(U)$ such that $0<\psi\le \dist(\cdot,U^c)$.  

Then, for every $\delta>0$,  there exist finite sets $Z_n$
 in $\partial V_n$ and
$0<r_z<\psi(z)$, $z\in Z_n$, $n\in\nat$,  such that 
for the union $Z$ of all $Z_n$ 
and the union $B$ of all $\ov B(z,r_z)$, $z\in Z$, the following holds:
\begin{itemize} 
\item $U\setminus B$ is a~champagne subdomain of~$U$ and $B$
is unavoidable in   $U$.
\item $\sum\nolimits_{z\in Z}  \kap (r_z)h(r_z)<\delta$.
\end{itemize} 
\end{corollary}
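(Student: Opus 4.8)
The plan is to apply Theorem~\ref{KZ} separately on each compact boundary $\partial V_n$, with its parameter $\eta$ chosen to decay fast in $n$, and then to glue the resulting pieces, using Proposition~\ref{harmonic} to see that $\bigcup_n\partial V_n$ is unavoidable and Proposition~\ref{AB-unavoidable},2 to transfer unavoidability to $B$.

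First I would fix the data needed to invoke Theorem~\ref{KZ} in the classical situation of Example~\ref{examples}.1: picking $\eta_0>0$ fixes the constants $c,C\ge 1$ and $\rho_0>0$ and the capacity function $\kap$ of Example~\ref{examples}.1, and I then fix once and for all a threshold $\kappa_0\in(0,(cC)^{-4})$. Since $\ov V_n\subset V_{n+1}$, the sets $\partial V_n$ are nonempty compact subsets of $U$, pairwise disjoint, and they form a locally finite family in $U$ that eventually leaves every compact subset of $U$; hence one may choose $\ve_n>0$ so small that the closed $\ve_n$-neighbourhoods $\ov W_n$ of $\partial V_n$ are compact subsets of $U$, pairwise disjoint, and still form a locally finite family in $U$. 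Finally set $\psi_n:=\min_{\partial V_n}\psi>0$ and
$$
\eta_n:=\min\bigl\{\delta\,2^{-n-1},\ \tfrac12\psi_n,\ \tfrac12\bigr\}\in(0,1).
$$

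For each $n$ I would apply Theorem~\ref{KZ} with $K:=\partial V_n$ (not thin at any of its points by hypothesis, so the resulting finite set may be taken inside $K$), $K':=\ov W_n$, the given $h$, the threshold $\kappa_0$, and the parameter $\eta_n$. This yields a finite set $Z_n\subset\partial V_n$ and radii $0<r_z<\eta_n$, $z\in Z_n$, so that the balls $\ov B(z,r_z)$, $z\in Z_n$, are pairwise disjoint subsets of $\ov W_n$, so that $R_1^{E_n}\ge\kappa_0$ on $\partial V_n$ for $E_n:=\bigcup_{z\in Z_n}\ov B(z,r_z)$, and so that $\sum_{z\in Z_n}\kap(r_z)\,h(r_z)<\eta_n\le\delta\,2^{-n-1}$. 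I would then put $Z:=\bigcup_n Z_n$ and $B:=\bigcup_{z\in Z}\ov B(z,r_z)=\bigcup_n E_n$. The verification that $U\setminus B$ is a champagne subdomain of $U$ is then routine: each $Z_n$ is nonempty (else $E_n=\emptyset$ and $R_1^{E_n}\ge\kappa_0>0$ would fail on the nonempty $\partial V_n$), so $Z$ is infinite; since $E_n\subset\ov W_n$ and $\{\ov W_n\}$ is locally finite in $U$, the set $B$ is relatively closed in $U$ and $Z$ is locally finite; the bubbles are pairwise disjoint because Theorem~\ref{KZ} handles those at a common level while distinct $\ov W_n$ are disjoint; for $z\in Z_n$ one has $r_z<\tfrac12\psi_n\le\tfrac12\psi(z)\le\tfrac12\dist(z,U^c)$, which gives both $r_z<\psi(z)$ and $\sup_{z\in Z}r_z/\dist(z,\partial U)\le\tfrac12<1$; and $r_z<\delta\,2^{-n-1}$ with each $Z_n$ finite forces the radii to tend to $0$ as $z\to\infty$ when $U$ is unbounded (connectedness of $U\setminus B$ follows as usual from each bubble being compactly contained in $U$ together with local finiteness). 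Summing, $\sum_{z\in Z}\kap(r_z)h(r_z)<\sum_n\delta\,2^{-n-1}=\delta/2<\delta$.

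It remains to check that $B$ is unavoidable in $U$. As $(V_n)$ is an exhaustion, the $V_n$ are relatively compact open sets covering $U$, and by continuity of the Brownian paths the harmonic measure $\mu_x^{V_n}=\vx^{U\setminus V_n}$ is carried by $\partial V_n$ for every $x\in V_n$; hence, by Proposition~\ref{harmonic} (and Lemma~\ref{simple} for the total masses), $A:=\bigcup_n\partial V_n$ is unavoidable in $U$. Since $E_n\subset B$ gives $R_1^B\ge R_1^{E_n}\ge\kappa_0$ on $\partial V_n$ for every $n$, we get $R_1^B\ge\kappa_0$ on $A$, and Proposition~\ref{AB-unavoidable},2 (with this unavoidable $A$, this $B$, and $\kappa=\kappa_0$) shows that $B$ is unavoidable. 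The only genuinely delicate point, as opposed to bookkeeping, is the coordinated choice of $\ve_n$ and $\eta_n$ in the first step: they must be small enough to ensure simultaneously that the radii stay below $\psi$, that the champagne ratio is bounded away from $1$, that bubbles from different levels are disjoint, that $B$ is relatively closed with $Z$ locally finite, and that the capacity sum is summable to less than $\delta$ — but each of these is only a routine smallness requirement once Theorem~\ref{KZ} is in hand.
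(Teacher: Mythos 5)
Your proof is correct and follows the same route as the paper's: apply Theorem~\ref{KZ} on each $\partial V_n$ with a geometrically decaying parameter $\eta_n$, note by Proposition~\ref{harmonic} that $\bigcup_n\partial V_n$ is unavoidable, and then pass to $B$ via Proposition~\ref{AB-unavoidable},2. The only cosmetic difference is how cross-level disjointness of bubbles is secured: the paper folds $\dist(\partial V_n,\partial V_{n-1}\cup\partial V_{n+1})$ into the definition of $\eta_n$, while you enforce it by choosing pairwise disjoint compact neighbourhoods $\ov W_n$ as the $K'$ in each application of Theorem~\ref{KZ} — both are valid.
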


\begin{proof}
 By Proposition \ref{harmonic}, the union $A$ of all boundaries
  $\partial V_n$, \hbox{$n\in\nat$}, is unavoidable in~$U$.
Let $\delta\in (0,1)$ and   
$$
\eta_n:=(1/2) \min\{2^{-n}\delta, 
\dist(\partial V_n, \partial V_{n-1}\cup \partial V_{n+1}),
       \inf \psi(\partial V_n)\},\qquad n\in\nat
$$
(with $V_0:=\emptyset$).

By Theorem  \ref{KZ}, we may choose $\kappa>0$
(by Example \ref{examples}.1, any  $0<\kappa<2^{-4(d-2)}$ will do),
finite sets $Z_n$ in $\partial V_n$, 
and $0<r_z<\eta_n$,  $z\in Z_n$, $n\in\nat$,  such that
$$
\sum\nolimits_{z\in Z_n} \kap(r_z) h(r_z)<\eta_n,
$$
 the closed balls 
$\ov B(z,r_z)$, $z\in Z_n$, are pairwise disjoint
 and    their union $E_n$ satisfies 
$$
\|\U \vx^{E_n}\|\ge \kappa, \qquad x\in \partial V_n.
$$
Let 
$$
Z:=\bigcup\nolimits_{n\in\nat}Z_n \und 
      B:=\bigcup\nolimits_{z\in Z}\ov B(z,r_z)=\bigcup\nolimits_{n\in\nat} E_n.
$$
Clearly, $U\setminus B$ is a champagne subdomain of $U$
and
$$
\sum\nolimits_{z\in Z}\kap(r_z)h(r_z)<\sum\nolimits_{n\in\nat}2^{-n}\delta=\delta.
$$
If $x\in A$, then $x\in \partial V_n$ for some  $n\in\nat$, and  hence,
$ \|\U\vx^B\|\ge \|\U\vx^{E_n}\|\ge \kappa$.
So $B$ is unavoidable, by Proposition \ref{AB-unavoidable},2, that is,
$\|\U\vx^B\|=1$, for every $x\in U$.
\end{proof} 

Let us return to the general situation we were considering before
this application of Theorem \ref{KZ}. 

\begin{theorem}\label{unavoidable-bal}
Let $h\colon (0,1)\to (0,1)$ be such that $\lim_{t\to 0} h(t)=0$, let $\delta>0$
and $\psi\in \C(X)$, $\psi>0$. 

Then  there exist a locally finite set $Z$ in $X$ and \hbox{$0<r_z<\psi(z)$},  
\hbox{$z\in Z$},  
such that the closed balls $\ov B(z,r_z)$ are pairwise disjoint,
the union  of all $\ov B(z,r_z)$ is unavoidable, and 
$\sum_{z\in Z} \kap(r_z) h(r_z)<\delta$.
\end{theorem}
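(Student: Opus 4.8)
The plan is to deduce Theorem~\ref{unavoidable-bal} from Theorem~\ref{KZ} by a standard exhaustion argument, very much as in the proof of Corollary~\ref{corollary-classic}, but now applied to the shells produced by Proposition~\ref{shells} rather than to the boundaries $\partial V_n$ (which need not be available in a general balayage space). First I would fix an exhaustion $(U_m)$ of $X$ by relatively compact open sets with $\ov U_m\subset U_{m+1}$. The idea is that Proposition~\ref{shells} gives, for any prescribed sequence $(k_n)$ of natural numbers, indices $m_n$ with $m_n+k_n\le m_{n+1}$ such that the union of the compact shells $B_n:=\ov U_{m_{n+1}}\setminus U_{m_n+k_n}$ is unavoidable. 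Each $B_n$ is compact, and $K_n':=\ov U_{m_{n+1}+1}\setminus U_{m_n+k_n-1}$ (shrinking/enlarging slightly, and intersecting with the exhaustion so as to stay relatively compact) serves as a compact neighborhood of $B_n$ inside a region disjoint from the corresponding neighborhoods of $B_{n-1}$ and $B_{n+1}$.

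Next I would choose the parameters for the $n$-th application of Theorem~\ref{KZ}. Fix $\kappa\in(0,(cC)^{-4})$ once and for all. Put
$$
\eta_n:=\tfrac12\min\{2^{-n}\delta,\ \rho_0,\ \dist(B_n,X\sms K_n')\cap\dist(B_n,B_{n-1}\cup B_{n+1}),\ \inf\psi(B_n)\},
$$
where the awkward middle term just means: small enough that balls of radius $<\eta_n$ centered in $B_n$ stay in $K_n'$ and do not meet the analogous region for $n\pm1$. Applying Theorem~\ref{KZ} with $K=B_n$, $K'=K_n'$, this $\eta_n$, and the given $h$, I obtain a finite set $Z_n\subset K_n'$ and radii $0<r_z<\eta_n$, $z\in Z_n$, such that the closed balls $\ov B(z,r_z)$, $z\in Z_n$, are pairwise disjoint subsets of $K_n'$, their union $E_n$ satisfies $\|\vx^{E_n}\|\ge\kappa$ for every $x\in B_n$, and $\sum_{z\in Z_n}\kap(r_z)h(r_z)<\eta_n$. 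By the choice of $\eta_n$ we also have $r_z<\psi(z)$ for $z\in Z_n$, and the families $\{\ov B(z,r_z):z\in Z_n\}$ for different $n$ lie in pairwise disjoint regions, so together with the fact that each $K_n'$ is relatively compact, the set $Z:=\bigcup_n Z_n$ is locally finite and the closed balls $\ov B(z,r_z)$, $z\in Z$, are all pairwise disjoint.

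Finally I assemble the conclusion. Put $B:=\bigcup_{z\in Z}\ov B(z,r_z)=\bigcup_{n\in\nat}E_n$. Then $\sum_{z\in Z}\kap(r_z)h(r_z)=\sum_{n\in\nat}\sum_{z\in Z_n}\kap(r_z)h(r_z)<\sum_{n\in\nat}\eta_n\le\sum_{n\in\nat}2^{-n}\delta=\delta$. For unavoidability: if $x\in B_n$ for some $n$, then $\|\vx^B\|\ge\|\vx^{E_n}\|\ge\kappa$; since $\bigcup_n B_n$ is unavoidable by Proposition~\ref{shells}, Proposition~\ref{AB-unavoidable},2 (applied with ``$A$'' $=\bigcup_n B_n$ and this $\kappa$) gives that $B$ is unavoidable. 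This finishes the proof. The only point requiring a little care — and the main obstacle — is the bookkeeping of neighborhoods: one must choose the $K_n'$ and the thresholds $\eta_n$ so that Theorem~\ref{KZ} can be applied on each shell \emph{and} the resulting bubble families for consecutive $n$ stay disjoint and the whole collection stays locally finite; all of this is routine once the exhaustion and the shells of Proposition~\ref{shells} are in hand, and no new analytic input beyond Theorem~\ref{KZ} is needed.
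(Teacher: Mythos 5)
Your proposal is correct and follows essentially the same route as the paper: apply Proposition~\ref{shells} (the paper uses $k_n=4$) to obtain an unavoidable union of compact shells, fatten each shell $B_n$ to a compact neighborhood $K_n'$ with the $K_n'$ pairwise disjoint, invoke Theorem~\ref{KZ} on each pair $(B_n,K_n')$ with thresholds $\eta_n\le 2^{-n}\delta$ controlled also by $\inf\psi$, and conclude via Proposition~\ref{AB-unavoidable},2. The only cosmetic difference is that you leave $(k_n)$ unspecified and phrase the disjointness of the $K_n'$ informally; the paper simply takes $k_n=4$ and $K_n'=\ov U_{m_{n+1}+1}\setminus U_{m_n+3}$ so that the gap needed for pairwise disjointness is automatic, but your bookkeeping, once pinned down (one needs $k_n$ at least $3$ or so for the particular $K_n'$ you wrote), is routine and carries through.
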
 

\begin{proof} Let us choose an exhaustion $(U_m)$ of $X$. 
By Proposition \ref{shells}, there exist $m_n\in \nat$, $n\in\nat$,
such that $m_{n}+4\le m_{n+1}$ and the union $A$ of 
the  compact ''shells'' $K_n:=\ov U_{m_{n+1}}\setminus U_{m_n+4}$
is unavoidable. For every $n\in\nat$, the compact 
$$
K_n':=\ov U_{m_{n+1}+1}\setminus U_{m_n+3}
$$ 
is a neighborhood of
$K_n$, and the sets $K_n'$, $n\in\nat$, are pairwise disjoint.
Assuming without loss of generality that $\delta<1$ we define 
$$
\eta_n:=\min\{2^{-n}\delta, \inf \psi(K_n')\}.
$$
Let $0<\kappa<(cC)^{-4}$.
By Theorem \ref{KZ}, there are  finite sets $Z_n$ in $K_n'$
and $0<r_z<\eta_n$, $n\in\nat$, such that
$$
\sum\nolimits_{z\in Z_n} \kap(r_z) h(r_z)<\eta_n,
$$
 the closed balls 
$\ov B(z,r_z)$, $z\in Z_n$, are pairwise disjoint,  contained
in $K_n'$, and    their union $E_n$ satisfies 
$$
\|\vx^{E_n}\|\ge \kappa, \qquad x\in K_n.
$$
The proof is finished in a similar way as 
the proof of Corollary \ref{corollary-classic}.
\end{proof} 

Applying Theorem \ref{unavoidable-bal}  to Example \ref{examples}.2
we  obtain the following.

\begin{corollary}\label{a-stable}
Let $X=\reald$, $d\ge 1$, $0<\a<\min\{d,2\}$, and let $\W$ be the
convex cone of all increasing limits of Riesz potentials $G\mu\colon
x\mapsto \int |x-y|^{\a-d}\,d\mu(y)$ {\rm(}$\mu$~finite measure on
$\reald$ with compact support\,{\rm)}.
Let $h\colon (0,1)\to (0,1)$ be such that $\lim_{t\to 0} h(t)=0$,
and let $\psi\in \C(X)$, $\psi>0$. 

Then, for every $\delta>0$,
there is a locally finite set $Z$ in $X$ and \hbox{$0<r_z<\psi(z)$},
\hbox{$z\in Z$},  
such that the closed balls $\ov B(z,r_z)$ are pairwise disjoint,
the union  of all~$\ov B(z,r_z)$ is unavoidable, and 
$\sum_{z\in Z} r_z^{d-\a} h(r_z)<\delta$.
\end{corollary}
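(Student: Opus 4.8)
The plan is to read Corollary~\ref{a-stable} off as the instance of Theorem~\ref{unavoidable-bal} indicated in Example~\ref{examples}.2, so the work is just to confirm that the $\a$-Riesz setting satisfies all the standing hypotheses of Section~\ref{champagne-bal}. First I would record the structural facts: $(\reald,\W)$ is a balayage space in which points are polar and the function $1$ is harmonic (cf.\ \cite{BH}); points are polar because $\a<\min\{d,2\}\le d$, so the kernel $G(x,\cdot)=|x-\cdot|^{\a-d}$ is unbounded near $x$ and $\{x\}$ carries no nonzero measure with bounded potential, while $1$ is harmonic since the isotropic $\a$-stable process is conservative. I take $\rho$ to be the Euclidean metric, $G(x,y):=|x-y|^{\a-d}$, and $\kap(r):=r^{d-\a}$; since $\a<d$ this $\kap$ is strictly increasing and continuous on all of $(0,\infty)$ with $\lim_{r\to0}\kap(r)=0$, so any $\rho_0>0$ is admissible, and $G$ is $>0$, lower semicontinuous, and continuous off the diagonal.

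Next I would check the four conditions of Section~\ref{champagne-bal}. Conditions (i) (each $G(\cdot,y)=|\cdot-y|^{\a-d}$ is a potential whose superharmonic support is $\{y\}$) and (ii) (the Riesz representation $p=G\mu$ for $p\in\Px$, see (\ref{p-rep})) are classical. Condition (iii), i.e.\ (\ref{G-local}), holds with $c=1$ and arbitrary $\ve$, since $G(x,y)\,\kap(|x-y|)=|x-y|^{\a-d}\,|x-y|^{d-\a}=1$ whenever $x\ne y$, and the doubling property (\ref{doubling}) holds with $C:=2^{d-\a}>1$ because $\kap(r)=2^{d-\a}(r/2)^{d-\a}$. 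The one condition still needing an argument is (\ref{rtwor}): by translation and dilation invariance of the set-up (the cone $\W$ being invariant under these maps and $1$ being scale invariant), $R_1^{B(x,r)\sms\ov B(x,r/2)}(x)$ is independent of $x$ and $r$; denote its value by $a$. Then $a\le1$ because $1\in\W$ dominates $1_{B(0,1)\sms\ov B(0,1/2)}$, and $a>0$ because the open annulus $B(0,1)\sms\ov B(0,1/2)$ contains a non-polar compact $F$, whose equilibrium potential $\hat R_1^F=G\mu_F$ is strictly positive everywhere ($G>0$ and $\mu_F\ne0$), so $a=R_1^{B(0,1)\sms\ov B(0,1/2)}(0)\ge\hat R_1^F(0)>0$. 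Hence (\ref{rtwor}) holds globally, a fortiori for every compact $K\subset\reald$.

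All hypotheses being in place, I would then invoke Theorem~\ref{unavoidable-bal} directly with the given $h$ (which satisfies $\lim_{t\to0}h(t)=0$), $\delta>0$, and $\psi\in\C(X)$, $\psi>0$: it produces a locally finite $Z\subset\reald$ and radii $0<r_z<\psi(z)$ such that the closed balls $\ov B(z,r_z)$ are pairwise disjoint, their union is unavoidable, and $\sum_{z\in Z}\kap(r_z)\,h(r_z)<\delta$; as $\kap(r_z)=r_z^{d-\a}$, this is exactly the asserted bound $\sum_{z\in Z}r_z^{d-\a}h(r_z)<\delta$. The hard part, to the extent there is one, is simply the verification of (\ref{rtwor}) above; everything else is bookkeeping, and all the genuine content sits in Theorem~\ref{KZ} and Theorem~\ref{unavoidable-bal}, which we are entitled to use.
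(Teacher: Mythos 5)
Your proposal is correct and follows the paper's own route exactly: the paper dispatches Corollary~\ref{a-stable} with the one-line remark that it is Theorem~\ref{unavoidable-bal} specialized via Example~\ref{examples}.2 ($c=1$, $\kap(r)=r^{d-\a}$, $C=2^{d-\a}$), and your write-up simply makes explicit the verifications the paper leaves implicit, in particular the scaling-invariance argument for~(\ref{rtwor}), which the paper obtains in the more general L\'evy setting via Proposition~\ref{true-capacity},3. No gaps.
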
 

\begin{remark}{\rm    
Let $\a\in (1,2)$ and $0<R_1<R_2<\dots$ such that $\limn R_n=\infty$. Using Proposition \ref{riesz} it is easy to see that
(as in the classical case) we may choose  $Z=\bigcup_{n\in\nat}  Z_n$, where  $Z_n\subset \partial B(0,R_n)$ 
and $r_z$ is the same for all $z\in Z_n$.
}
\end{remark}

\section{Application to isotropic unimodal semigroups}\label{levy}

To cover more general isotropic unimodal L\'evy processes, 
we study  
 (in a purely analytic way) the following  isotropic situation.

Let $\mathbbm P=(P_t)_{t>0} $ be a right continuous sub-Markov semigroup on $\reald$, $d\ge 1$, and let
$V_0$ denote the potential kernel of $\mathbbm P$ (see Section \ref{sec-bal}): 
$$
                V_0f(x)=\int_0^\infty P_tf(x)\,dt, \qquad f\in \B^+(\reald),\ x\in\reald.
$$

Further, let $g$ be a decreasing  numerical function on $\left[0,\infty\right)$ such that  
$0<g<\infty$ on $(0,\infty)$, $\lim_{r\to 0} g(r)=g(0)=\infty$, $\lim_{r\to \infty} g(r)=0$, and 
\begin{equation}\label{g-ass}
\int_0^1 g(r) r^{d-1}\,dr<\infty
\end{equation} 
(later on we shall replace (\ref{g-ass}) by the stronger property (\ref{g-decay})). 
We suppose that 
\begin{equation}\label{ass-1}
                                                   G_0:=g(|\cdot|)\in \es  \und  V_0f(x)=G_0\ast f(x)=\int G_0(x-y) f(y)\,dy,
\end{equation} 
for all $f\in \B^+(\reald)$ and $x\in \reald$ (where $\es$ is the set of all $\mathbbm P$-excessive functions).

\begin{remarks} {\rm 1}. 
Of course, the assumptions, including {\rm (\ref{g-decay})},  are satisfied in the classical case      
and by Riesz potentials with $g(r)=r^{\a-d}$, $\a\in (0,2)$, $\a<d$. 

{\rm 2}. 
 Let us note that {\rm (\ref{ass-1})} holds with $G_0=\int_0^\infty p_t\,dt$, 
 if there exists a~Borel measurable 
function  $(t,x)\mapsto p_t(x)$ on $(0,\infty)\times \reald$ such that each $p_t$ is radial and decreasing 
{\rm(}that is, $p_t(x)\le p_t(y)$ if $|y|\le  |x|${\rm)}, $p_s\ast p_t=p_{s+t}$, for all $s,t>0$, and $P_tf=p_t\ast f$,
for every $f\in \B^+(\reald)$. 

{\rm 3.} 
 In particular, our hypotheses, including {\rm (\ref{g-decay})},  are satisfied by  the transition semigroups of
the isotropic unimodal L\'evy processes $\mathfrak X=(X_t, P^x)$ studied   
in~{\rm \hbox{\cite{grzywny, mimica-vondracek}}} {\rm(}see also {\rm\cite{hawkes}}{\rm)}.  
It is assumed that the characteristic function~$\psi$ for such a process~$\mathfrak X$ {\rm(}given by 
$  e^{-t\psi(x)}=   E^0 [e^{i\langle x,X_t\rangle}]$,  $t>0$, $x\in\reald${\rm)} satisfies a \emph{weak lower scaling 
condition}: There exist  $\a>0$,  $0\le C_L\le 1$, and $R_0>0$ such that 
\begin{equation}\label{wlsc}
\psi(\lambda x)\ge C_L\lambda^\a \psi(x),\qquad \mbox{for all } \lambda\ge 1\mbox{  and }  x\in B(0,R_0)^c
\end{equation} 
{\rm(}see {\rm\cite[p.~2]{grzywny}}  and {\rm \cite[(1.4)]{mimica-vondracek})}.  Having shown that $g(r)\approx r^{-d} \psi(1/r)\inv$
{\rm(}see {\rm \cite[Proposition 1, Theorem 3]{grzywny}} or {\rm \cite[Lemma 2.1]{mimica-vondracek})},  
condition {\rm (\ref{wlsc})}
implies that {\rm (\ref{g-wlsc})}  holds {\rm(}which in turn leads to  {\rm (\ref{g-decay})}{\rm )}. 

 Examples in the case $d\ge 3$ are listed in  {\rm\cite[p.~3]{mimica-vondracek}};
for $d\le 2$, see {\rm\cite[Section 6]{mimica-vondracek}}.

{\rm4.}
For the general possibility of constructing new examples by subordination see Theorem \ref{subordination}.    
\end{remarks}

\begin{lemma}\label{VC}
For every bounded $f\in \B^+(\reald)$ having  compact support, the function~$V_0f$ is contained in $\es\cap C(\reald)$ and  vanishes at infinity.
\end{lemma}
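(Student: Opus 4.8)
*For every bounded $f\in \B^+(\reald)$ having compact support, the function $V_0f$ is contained in $\es\cap C(\reald)$ and vanishes at infinity.*

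The plan is to treat $V_0f=G_0\ast f$ as the convolution of the radial, decreasing, locally integrable function $G_0=g(|\cdot|)$ with the bounded, compactly supported function $f$, and to read off all three assertions from the hypotheses on $g$, chiefly (\ref{g-ass}). Write $K:=\supp f$, $M:=\|f\|_\infty$, $R_K:=\max\{|y|\colon y\in K\}$, and let $\omega$ denote the surface area of the unit sphere in $\reald$. The first (bookkeeping) step is to observe that $V_0f$ is bounded, hence finite-valued: if $\dist(x,K)\ge 1$, then $g(|x-y|)\le g(1)$ for all $y\in K$ by monotonicity, so $V_0f(x)\le Mg(1)\lambda_d(K)$; otherwise $K\subset B(x,\diam K+1)$, and polar coordinates together with (\ref{g-ass}) give $V_0f(x)\le M\omega\int_0^{\diam K+1}g(r)r^{d-1}\,dr<\infty$.

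For excessiveness I would use that $\mathbbm P$ is a kernel semigroup: by Tonelli and the semigroup property, $P_sV_0f=\int_0^\infty P_{s+t}f\,dt=\int_s^\infty P_uf\,du$, which increases to $V_0f$ as $s\downarrow 0$ by monotone convergence. Thus $V_0f$ is supermedian with $\sup_{s>0}P_sV_0f=V_0f$, i.e.\ $V_0f\in\es$ (cf.\ Section~\ref{sec-bal}).

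The real work is continuity. Fix $x_0\in\reald$ and $\rho\in(0,1]$; for $|x-x_0|<\rho/2$ I would split
$$
|V_0f(x)-V_0f(x_0)|\le\int_{B(x_0,\rho)}\bigl(g(|x-y|)+g(|x_0-y|)\bigr)|f(y)|\,dy+\int_{\reald\setminus B(x_0,\rho)}\bigl|g(|x-y|)-g(|x_0-y|)\bigr|\,|f(y)|\,dy.
$$
Since $B(x_0,\rho)\subset B(x,2\rho)$, monotonicity and polar coordinates bound the first term by $\ve(\rho):=2M\omega\int_0^{2\rho}g(r)r^{d-1}\,dr$, which is independent of $x$ and tends to $0$ as $\rho\to 0$ by (\ref{g-ass}). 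In the second term the integrand is dominated by $2g(\rho/2)\,1_K\in L^1(\reald)$ (because $|x-y|\ge\rho/2$ and $|x_0-y|\ge\rho/2$ there) and converges pointwise for a.e.\ $y$ as $x\to x_0$ (the exceptional set lies in $\{x_0\}$ together with the countably many spheres about $x_0$ on which the monotone function $g$ jumps, hence is Lebesgue-null), so dominated convergence makes it tend to $0$ as $x\to x_0$. Letting first $x\to x_0$ and then $\rho\to 0$ yields continuity of $V_0f$ at $x_0$.

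Finally, for decay at infinity: if $|x|>R_K$, then $|x-y|\ge|x|-R_K$ for every $y\in K$, so $V_0f(x)\le M\lambda_d(K)\,g(|x|-R_K)\to 0$ as $|x|\to\infty$ because $g(r)\to 0$. I expect the continuity step — more precisely, securing a bound near the singularity of $g$ that is uniform in $x$ — to be the only point requiring care; the remaining steps are routine manipulations with the monotonicity and integrability of $g$.
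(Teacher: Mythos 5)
Your proof is correct and takes the same route as the paper, which dispatches the lemma in a single sentence by citing $V_0(\B^+(\reald))\subset\es$, (\ref{g-ass}), (\ref{ass-1}), and $\lim_{r\to\infty}g(r)=0$; you have simply supplied the details. Your careful handling of the (at most countable) jump set of the monotone function $g$ in the dominated-convergence step is a point worth noting, since the continuity of $G_0$ away from the origin is only established later in the paper (in the proof of Theorem~\ref{green-rep}) via the balayage-space machinery, and so cannot be invoked here.
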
 

\begin{proof} Since $V_0(\B^+(\reald))\subset \es$, 
the statement follows immediately from  (\ref{g-ass}), (\ref{ass-1}),  and  our transience property $\lim_{r\to \infty} g(r)=0$.
\end{proof} 

The next result  as well as Theorem \ref{green-rep} is of interest in its own right.  

\begin{theorem} \label{bal-true}
\begin{enumerate}
\item[\rm 1.]
 $(\reald,\es)$ is a balayage space. 
\item[\rm 2.]
Every point in $\reald$ is polar.
\item[\rm 3.] 
Borel measurable finely open $U\ne\emptyset$ have strictly positive Lebesgue measure.
\item[\rm 4.] 
If \,$\mathbbm P$ is a Markov semigroup, then the constant $1$ is harmonic.
\end{enumerate}
\end{theorem}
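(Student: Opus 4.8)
The plan is to establish the four assertions in Theorem \ref{bal-true} in the order given, exploiting the abstract machinery of balayage spaces together with the concrete convolution structure $V_0f = G_0 \ast f$.

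\textbf{Part 1: $(\reald,\es)$ is a balayage space.}
The strategy is to verify the standard characterization of balayage spaces via the sub-Markov resolvent $\mathbbm V = (V_\lambda)_{\lambda>0}$ associated with the semigroup $\mathbbm P$ (cf.\ Section~\ref{sec-bal} of the Appendix). One knows from the resolvent equation that $V_0 = \liml V_\lambda$ in the increasing sense, and that $\es$ (the $\mathbbm P$-excessive functions) coincides with the $\mathbbm V$-excessive functions. To invoke the criterion that the excessive functions of a sub-Markov resolvent form a balayage space, I would check: (a) the potential kernel $V_0$ is proper, i.e.\ $V_0 = \sup_n V_0 f_n$ for a sequence $f_n \in \B^+(\reald)$ with $V_0 f_n$ bounded --- this follows from Lemma~\ref{VC} by taking $f_n = 1_{B(0,n)}$ and noting $V_0 1_{B(0,n)} \in \C(\reald)$ vanishes at infinity, hence is bounded, while $V_0 1_{B(0,n)} \uparrow V_0 1 = G_0 \ast 1$; (b) there are enough continuous potentials separating points and $1 \in \es$ is excessive so $\es$ contains enough functions --- the convolution kernel $G_0 = g(|\cdot|)$ with $g>0$, $g(0)=\infty$, gives $V_0 f$ strictly positive and continuous for suitable $f$, and translates $G_0(\cdot - y)$ separate points. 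The key structural inputs are that $G_0$ is lower semicontinuous (being a decreasing function of $|\cdot|$ with value $\infty$ at $0$), finite and continuous off the diagonal, and locally integrable by (\ref{g-ass}). I expect the main obstacle here to be the bookkeeping of which precise form of the balayage-space axioms to cite from \cite{BH} (convergence axiom, separation, and the existence of a strict potential); most likely the paper's Appendix states a clean ``excessive functions of a nice resolvent'' criterion that does the heavy lifting, and I would quote that.

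\textbf{Parts 2 and 3: polarity of points and positivity of Lebesgue measure of finely open sets.}
For Part 3, suppose $U$ is Borel measurable, finely open, nonempty, with Lebesgue measure zero. Then $1_U = 0$ a.e., so for any $f \in \B^+(\reald)$ one has $V_0(1_U f) = G_0 \ast (1_U f) = 0$; in probabilistic terms the process spends zero expected time in $U$, which forces $P^x[T_{\reald \setminus U} = 0] = 1$ for quasi-every $x$, contradicting fine-openness of $U$ at its points (the complement would be finely dense in $U$). More cleanly: the finely continuous version of $V_0 1_U$ is identically $0$, but if $x \in U$ and $U$ is finely open then the fine neighborhood $U$ of $x$ would have to be charged, a contradiction with $g > 0$ on $(0,\infty)$. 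Part 2 then follows: a single point $\{y\}$ has Lebesgue measure zero, so it cannot contain a nonempty finely open set; by the characterization of polar sets in a balayage space with absolutely continuous kernel, a set that is Lebesgue-null and ``thin everywhere'' is polar --- and indeed $\{y\}$ is polar precisely because $G_0(\cdot - y) = g(|\cdot - y|)$ is a potential whose value at $y$ is $g(0) = \infty$ while it is finite everywhere else, so $\hat R_1^{\{y\}} \le G_0(\cdot-y)/g(\rho)$ on $\reald \setminus B(y,\rho)$ for every $\rho$, giving $\hat R_1^{\{y\}} = 0$. I would order these as: Part 3 first (direct from absolute continuity of $V_0$), then Part 2 as a consequence plus the $g(0)=\infty$ argument.

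\textbf{Part 4: if $\mathbbm P$ is Markov then $1$ is harmonic.}
Here $P_t 1 = 1$ for all $t>0$. Harmonicity of $1$ means $1$ is finely continuous (clear) and satisfies $\vx^{X \setminus V}(1) = 1$, i.e.\ $R_1^{X\setminus V} = 1$ on every relatively compact open $V$; equivalently $1$ equals its own reduite off relatively compact sets, or: for the associated Hunt process, $P^x[T_{X\setminus V} < \infty] = 1$. Since $(\reald,\es)$ is transient (by $g(\infty)=0$, the kernel $V_0$ is proper), the process leaves every relatively compact set in finite time almost surely, so $T_{X \setminus V} < \infty$ a.s. Combined with $P^x$-conservativeness ($\mathbbm P$ Markov means no killing, so the process does not die), we get $\vx^{X\setminus V}(1) = P^x[T_{X\setminus V}<\infty] = 1$. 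Thus $1$ is harmonic. The only subtlety --- and the step I would be most careful about --- is justifying that the Markov (mass-preserving) property of the semigroup really transfers to conservativeness of the Hunt process realization and hence to $\|\vx^A\| = 1$ whenever $A$ is ``hit''; this is where I would lean on the semigroup--process correspondence developed in the Appendix (Section~\ref{sec-bal}), specifically that $P_t 1 = 1$ forces the lifetime to be predictable-infinite on the event of not escaping to infinity, together with transience handling the escape.

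Overall, I expect Part~1 to be where the real work sits (assembling the balayage-space axioms from the convolution data), while Parts~2--4 are fairly short deductions once Part~1 and the absolute-continuity/transience properties of $V_0$ are in hand; the genuinely delicate point across the whole proof is the passage between the analytic hypotheses on $\mathbbm P$ and $G_0$ and the potential-theoretic/probabilistic conclusions, which the Appendix is presumably designed to streamline.
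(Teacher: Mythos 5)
Your proposal is correct and follows essentially the same route as the paper: Part~1 via the appendix criterion (Theorem~\ref{II.4.7} together with the properness and continuity furnished by Lemma~\ref{VC} and Theorem~\ref{V-approx}), Part~2 via $G_0\in\es$ with $G_0(0)=g(0)=\infty$ plus translation invariance, Part~3 via the fact that the resolvent charges fine neighborhoods (the ``moreover'' clause of Theorem~\ref{II.4.7}) combined with absolute continuity of $V_0(x,\cdot)$, and Part~4 by the transience--conservativeness argument that underlies the cited \cite[III.7.6]{BH}. The only real looseness is that your Part~3 invokes ``the process spends positive time in fine neighborhoods'' without naming Theorem~\ref{II.4.7}'s $\liml\lvl(x,U)=1$, and your parenthetical ``lower semicontinuous (being a decreasing function of $|\cdot|$)'' is a non sequitur (decreasing does not give l.s.c.; what matters is $G_0\in\es$) -- but neither affects the substance.
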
 

\begin{proof} 1. Consequence of Lemma \ref{VC} and  Theorems \ref{II.4.7} and \ref{V-approx}. 

2. Since $G_0\in \es$, the origin is polar. By translation invariance, every point in~$\reald$ is polar.

3. Let $\mathbbm V=(V_\lambda)_{\lambda>0}$ be the resolvent of $\mathbbm P$ and let 
$U$ be a Borel measurable finely open set   and $x\in U$. 
 By Theorem \ref{II.4.7}, there exists
$\lambda>0$ such that $V_\lambda(x,U)>0$. The proof is finished, since $V_0(x,\cdot)\ge V_\lambda(x,\cdot)$
and $V_0(x,\cdot)$ is absolutely continuous with respect to Lebesgue measure
(having the density $y\mapsto G(x-y)$). 

4. True, by \cite[III.7.6]{BH}. 
\end{proof}

The following proposition  will be useful for us (and shows that any open set satisfying an exterior cone condition
is regular for the Dirichlet problem):

\begin{proposition}\label{cone}
Let $z,z_0\in\reald$, $z\ne z_0$  and $0<r<|z-z_0|$.
Then the open set $U_0:=\convex(\{z\}\cup B(z_0,r))\setminus \{z\}$
is not thin at $z$, that is, $z$ is contained in the fine closure of $U$.\footnote{The fine topology
 is the coarsest topology for which all functions in $\W$ are continuous,
and $\convex (A)$ denotes the convex hull of $A$.}
\end{proposition}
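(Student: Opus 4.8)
The plan is to show that $z$ is a regular (non-thin) point of $U_0$ by exhibiting a cone at $z$ contained in $U_0$ and then invoking a scaling/Riesz-potential argument of the type already used in the paper. Concretely, $U_0 = \convex(\{z\}\cup B(z_0,r))\setminus\{z\}$ contains, for every $0<s<|z-z_0|$, a truncated open cone with vertex $z$, axis in the direction $z_0-z$, fixed half-angle $\theta>0$ (where $\sin\theta = r/|z-z_0|$), and height $s$; call it $\Gamma_s$. Since the $\Gamma_s$ shrink to $z$ as $s\to 0$ and are all similar to one fixed cone $\Gamma_1$ via the dilation $x\mapsto z + s(x-z)$, it suffices to show that $\liminf_{s\to 0} R_1^{\Gamma_s\cap (B(z,s)\setminus \ov B(z,s/2))}(z) > 0$, or more simply that a fixed annular piece of $\Gamma_1$ is non-$\a$-thin at $z$ with a scale-invariant amount of balayage.

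First I would fix the reference set $\Lambda := \Gamma_1 \cap \bigl(B(z,1)\setminus \ov B(z,1/2)\bigr)$, a nonempty open subset of $U_0$ whose closure does not contain $z$; it is nonpolar (it has positive Lebesgue measure, which by Theorem \ref{bal-true}.3 is incompatible with polarity, and in any case an open set is nonpolar). Hence $\kappa_0 := R_1^{\ov\Lambda}(z) > 0$: indeed $\ov\Lambda$ is a compact nonpolar set not containing $z$, so its equilibrium potential is a nonzero potential, strictly positive at $z$ because $G(z,\cdot)>0$ on $\ov\Lambda$ and the equilibrium measure of a nonpolar compact is nonzero (this uses (i), (ii) in Section~\ref{champagne-bal}, exactly as in the proof of Theorem~\ref{KZ}, or alternatively \cite[VI.5.1]{BH} in the Riesz case). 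Next, by the scaling invariance of $G$ under the dilations $\delta_s\colon x\mapsto z+s(x-z)$ — more precisely, $G(\delta_s x,\delta_s y) = s^{\a-d}G(x,y)$ in the Riesz case and the analogous local comparison (\ref{G-local}) with $\kap(r)=r^{\a-d}$ in general — one gets $R_1^{\delta_s(\ov\Lambda)}(\delta_s z) = R_1^{\ov\Lambda}(z) = \kappa_0$, because balayage of the constant $1$ is scale invariant. But $\delta_s(\ov\Lambda) = \ov\Lambda_s$ where $\Lambda_s := \Gamma_s\cap(B(z,s)\setminus\ov B(z,s/2)) \subset U_0$, and $\delta_s z = z$. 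Therefore $R_1^{\ov\Lambda_s}(z) = \kappa_0$ for all small $s$.

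Finally, since every fine neighborhood of $z$ contains some $B(z,s)$, and $\ov\Lambda_s\setminus\{z\} = \Lambda_s \subset U_0 \cap B(z,s)$ with $R_1^{U_0\cap B(z,s)}(z) \ge R_1^{\ov\Lambda_s}(z) = \kappa_0 > 0$ (monotonicity of $R_1^{(\cdot)}$, plus the fact that $z\notin\ov\Lambda_s$ so the reduced function on $\ov\Lambda_s$ agrees with that on $\Lambda_s$), the reduced functions $R_1^{U_0\cap W}(z)$ do not tend to $0$ as $W$ runs through a neighborhood basis of $z$. By the standard characterization of thinness (a set $E$ is thin at $z\notin E$ iff $\hat R_1^{E\cap W}(z)\to 0$ as $W\downarrow\{z\}$, see \cite[VI.5.4]{BH}), $U_0$ is not thin at $z$, i.e. $z$ lies in the fine closure of $U_0$.

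The main obstacle is making the scaling step rigorous in the \emph{general} balayage-space setting rather than the clean Riesz case: there $G$ is not exactly homogeneous, only locally comparable to $\kap(\rho(\cdot,\cdot))$ via (\ref{G-local}), and $\kap$ is merely doubling. So instead of exact scale invariance of $R_1^{(\cdot)}$ one must argue that the two-sided bounds $c^{-1}\le G(x,y)\kap(\rho(x,y))\le c$ together with the doubling property (\ref{doubling}) force $R_1^{\ov\Lambda_s}(z)$ to stay bounded below by a constant independent of $s$ — essentially by comparing, on each scale $s$, the equilibrium potential of $\ov\Lambda_s$ with $\kap(s)\inv$ times a fixed mass, exactly the mechanism of (\ref{def-rz})–(\ref{mz}) in the proof of Theorem~\ref{KZ}. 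Once that uniform lower bound is in hand, the thinness criterion closes the argument.
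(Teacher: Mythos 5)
Your approach works in the pure Riesz case, where $G$ is exactly homogeneous, but it does not work in the setting in which Proposition~\ref{cone} is actually stated, and you correctly sense but do not close the resulting gap. Proposition~\ref{cone} lives in Section~\ref{levy}, where the only structure available is that the potential kernel is convolution with $G_0=g(|\cdot|)$ for a decreasing $g$ satisfying the integrability condition~(\ref{g-ass}) -- there is no homogeneity, no condition~(\ref{G-local}), no doubling, and no capacity function $\kap$ yet. The identity $R_1^{\delta_s(\ov\Lambda)}(z)=R_1^{\ov\Lambda}(z)$ that your argument hinges on uses exact scaling of $G$, which is simply false for a general $g$. Your proposed repair -- replacing exact scaling by the two-sided comparison~(\ref{G-local}) and the doubling property~(\ref{doubling}) -- is circular at this point in the paper: those properties are only derived \emph{after} Proposition~\ref{cone} (they require the Main Assumption~(\ref{g-decay}), which is introduced after Theorem~\ref{green-rep}, and in fact Proposition~\ref{cone} itself is used to prove the continuity of $G_0$ in Theorem~\ref{green-rep}). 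Even granting them, ``the mechanism of (\ref{def-rz})--(\ref{mz})'' does not by itself produce a scale-uniform lower bound on $R_1^{\ov\Lambda_s}(z)$; that would be a nontrivial additional estimate, not a one-line remark. So the scaling step is a genuine gap, not merely a technicality.

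The paper's proof avoids scaling entirely and uses only rotation invariance together with polarity of points. Reduce to $z=0$ and set $R=|z_0|$. Finitely many balls $B(z_j,r)$, $z_j\in\partial B(0,R)$, cover $\partial B(0,R)$, so the cones $U_j:=\convex(\{0\}\cup B(z_j,r))\setminus\{0\}$ cover $B(0,R)\setminus\{0\}$. Since $\{0\}$ is polar (Theorem~\ref{bal-true},2), the punctured ball is not thin at $0$; by finite subadditivity of thinness, some $U_j$ is not thin at $0$; by radial invariance of $G_0$, all the $U_j$ are rotations of $U_0$, so $U_0$ is not thin at $0$. This argument needs nothing beyond what is already in place, in particular no homogeneity or scaling. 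You should adopt this covering-plus-rotation argument rather than trying to force a Riesz-style dilation argument into a setting where dilations are not symmetries.
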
 

\begin{proof} We may assume without loss of generality that $z=0$. Let $R:=|z_0|$. There exist  
 $z_1,\dots, z_m\in \partial B(0,R)$ such that  the balls $B(z_j,r)$, $0\le j\le m$, cover~$\partial B(0,R)$. 
Then $B(0,R)\setminus \{0\} $ is covered by the union of the sets
 $U_j:=\convex(\{0\}\cup B(z_j,r))\setminus \{0\}$, $0\le j\le m$.  
 Since the origin is polar,  it is contained 
in the fine closure of~$B(0,R)\setminus \{0\}$, hence in the fine closure of one of these sets. By radial invariance, 
the origin is contained  in the fine closure of $U_0$.  
\end{proof} 

\begin{corollary}\label{ball-reg}
Every open ball $B(x,r)$, $x\in\reald$, $r>0$, is finely dense in the closed ball $\ov B(x,r)$.     

In particular, for all $Z\subset \reald$ and $r_z>0$, $z\in Z$, the union of all $B(z,r_z)$
is unavoidable if and only if the union of all $\ov B(z,r_z)$ is unavoidable.
\end{corollary}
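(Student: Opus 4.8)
The plan is to read off the first assertion from Proposition~\ref{cone}, and then to derive the ``in particular'' statement from the fact that, by the very definition of the fine topology as the coarsest topology making every member of $\W$ continuous, each $u\in\W$ has finely closed level sets $\{u\ge 1\}$.

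For the first assertion, fix $x\in\reald$, $r>0$ and a point $z\in\partial B(x,r)$; it suffices to show that $z$ lies in the fine closure of $B(x,r)$. I would pick $z_0$ on the open segment joining $x$ to $z$, so that $z_0\in B(x,r)$ and $0<|z-z_0|<r$, and then a radius $\rho$ with $0<\rho<\min\{|z-z_0|,\,r-|z_0-x|\}$. The second bound forces $\ov B(z_0,\rho)\subset B(x,r)$, and the first is exactly what is needed to apply Proposition~\ref{cone} (with $z_0,\rho$ in the roles of its $z_0,r$): it gives that $U_0:=\convex(\{z\}\cup B(z_0,\rho))\setminus\{z\}$ is not thin at $z$, i.e.\ $z$ lies in the fine closure of $U_0$. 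Since $B(x,r)$ is convex, contains $B(z_0,\rho)$, and has $z$ in its Euclidean closure, every convex combination $\lambda z+(1-\lambda)y$ with $y\in B(z_0,\rho)$ and $\lambda\in[0,1)$ lies in $B(x,r)$; the only point of the truncated cone with $\lambda=1$ is $z$ itself, which has been removed, so $U_0\subset B(x,r)$. Hence $z$ lies in the fine closure of $B(x,r)$. As $z\in\partial B(x,r)$ was arbitrary and fine closures are contained in Euclidean closures, the fine closure of $B(x,r)$ equals $\ov B(x,r)$, which is the claimed fine density.

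For the ``in particular'' statement, set $B:=\bigcup_{z\in Z}B(z,r_z)$ and $\ov B:=\bigcup_{z\in Z}\ov B(z,r_z)$, and write $B^{f}$ for the fine closure of $B$. By the first part, the fine closure of $B(z,r_z)$ is $\ov B(z,r_z)$, and $B(z,r_z)\subset B$, so $\ov B(z,r_z)\subset B^{f}$ for every $z$ and therefore $B\subset\ov B\subset B^{f}$. The key point is that $R_1^B=R_1^{B^{f}}$: if $u\in\W$ and $u\ge 1$ on $B$, then $B\subset\{u\ge 1\}$ and the latter set is finely closed, whence $u\ge 1$ on $B^{f}$; thus $R_1^B$ and $R_1^{B^{f}}$ are infima of the same family of functions in $\W$. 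Since $A\mapsto R_1^A$ is monotone, the chain $B\subset\ov B\subset B^{f}$ then squeezes $R_1^{\ov B}$ between $R_1^B$ and $R_1^{B^{f}}=R_1^B$, so $R_1^B=R_1^{\ov B}$; in particular one of them is identically $1$ precisely when the other is, which is the asserted equivalence of unavoidability.

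I do not expect a genuine difficulty: the potential-theoretic content is entirely absorbed by Proposition~\ref{cone} together with the fine continuity of elements of $\W$, and the rest is elementary convex geometry. The one place that needs a little care is arranging the two strict inequalities on $\rho$ simultaneously, namely $\rho<|z-z_0|$ (the hypothesis of Proposition~\ref{cone}) and $\ov B(z_0,\rho)\subset B(x,r)$ (so that the truncated cone stays inside the open ball); this is exactly why $z_0$ is taken on the segment $[x,z]$ rather than chosen arbitrarily near $z$.
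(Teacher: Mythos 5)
Your proof is correct and follows essentially the same route as the paper: both reduce the first claim to Proposition~\ref{cone} by exhibiting a truncated cone with apex $z\in\partial B(x,r)$ contained in $B(x,r)$, the only difference being that the paper takes the cleaner choice $z_0=x$ and $\rho=r/2$ (so the inclusion $\convex(\{z\}\cup B(x,r/2))\setminus\{z\}\subset B(x,r)$ is immediate), whereas you place $z_0$ on the segment $[x,z]$ and shrink $\rho$, which works but is slightly more elaborate. Your argument for the ``in particular'' statement, via fine closedness of $\{u\ge1\}$ for $u\in\W$ and monotonicity of $A\mapsto R_1^A$, is exactly the standard justification the paper leaves implicit.
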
 

\begin{proof} Let $x\in\reald$, $r>0$, $z\in \partial B(x,r)$. Then \hbox{$\convex(\{z\}\cup B(x,r/2))\setminus \{z\}\subset B(x,r)$}.
Thus,  by Proposition \ref{cone},  the point $z$ is contained in the fine  closure of $B(x,r)$. 
\end{proof} 

For all $x,y\in \reald$, let    
$$
        G(x,y):= G_0(x-y).
$$
Then $G$ is symmetric (that is, $G(x,y)=G(y,x)$, $x,y\in\reald$), continuous outside the diagonal,  and it is 
a  Green function for $(\reald,\es)$:

\begin{theorem}\label{green-rep}    
\begin{itemize} 
\item[\rm 1.]                        
For every $y\in\reald$, $G_y:=G(\cdot,y)$     
is a potential with superharmonic support $\{y\}$.
\item[\rm 2.]           
Let $\mu$ be a~measure on $\reald$.   
Then $G\mu:=\int G_y\,d\mu(y)\in \es$ and,
provided $G\mu$ is a~potential,\footnote{For the definition of potentials on balayage spaces see Section 2.}
the support of~$\mu$ is the superharmonic support of~$G\mu$. 
\item[\rm 3.] 
For every potential $p$ on $\reald$, there exists a~{\rm(}unique{\rm)} measure $\mu$ on $\reald$
such that $p=G\mu$. 
\end{itemize}
\end{theorem}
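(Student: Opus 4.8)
The plan is to establish the three assertions in the order 1, 3, 2: assertion~3 will be a short application of a known representation theorem once assertion~1 is available, and the support statement in assertion~2 will use the uniqueness from assertion~3. The real work is in assertion~1. First, $G_y\in\es$ by translation invariance: since $V_0f=G_0\ast f$ for every $f\in\B^+(\reald)$, the potential kernel $V_0$ commutes with translations, hence so does the resolvent $(V_\lambda)_{\lambda>0}$ (recovered from $V_0$ through $V_\lambda(I+\lambda V_0)=V_0$), and therefore $\es$ is translation invariant; as $G_0\in\es$ by (\ref{ass-1}), also $G_y=G_0(\cdot-y)\in\es$. Next, $G_y$ is harmonic on $\reald\setminus\{y\}$: putting $\varphi_\varepsilon:=|B(0,\varepsilon)|\inv 1_{B(y,\varepsilon)}$, each $V_0\varphi_\varepsilon\in\es$ is harmonic off $\ov B(y,\varepsilon)$ (the potential of a function is harmonic off its support), and $V_0\varphi_\varepsilon\to G_y$ pointwise as $\varepsilon\downarrow 0$ (at $x\ne y$ by continuity of $g(|x-\cdot|)$ at $y$, while $V_0\varphi_\varepsilon(y)\ge g(\varepsilon)\to\infty=G_y(y)$); by monotonicity of $g$, $V_0\varphi_\varepsilon(x)\le g(|x-y|-\varepsilon)$ for $|x-y|>\varepsilon$, so the family is locally uniformly bounded on every compact set avoiding $y$, and a Harnack-type convergence argument (cf.\ \cite{BH}) gives harmonicity of $G_y$ off $\{y\}$. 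Finally, $G_y$ is not harmonic on any neighbourhood of $y$: since $G_y=g(|\cdot-y|)\le g(r)$ on $B(y,r)^c$, harmonicity of $G_y$ on $B(y,r)$ would give
$$
   g(0)=G_y(y)=R_{G_y}^{B(y,r)^c}(y)=\int G_y\,d\ve_y^{B(y,r)^c}\le g(r)\,\|\ve_y^{B(y,r)^c}\|\le g(r)<\infty ,
$$
a contradiction; hence the superharmonic support of $G_y$ is exactly $\{y\}$. The same estimate shows $G_y$ is a potential: its greatest harmonic minorant $h$ satisfies $0\le h\le G_y$, is harmonic on $\reald$, and $h(y)=R_h^{B(y,r)^c}(y)\le g(r)$ for every $r>0$; letting $r\to\infty$ gives $h(y)=0$, and the minimum principle on the connected space $\reald$ (cf.\ \cite{BH}) forces $h\equiv0$.

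For assertion~3, by Theorem \ref{bal-true}.1 the pair $(\reald,\es)$ is a balayage space with $\W=\es$, the potential kernel $V_0$ of the resolvent of $\mathbbm P$ is proper by Lemma \ref{VC}, and $V_0f=G(f\mu_0)$ with $\mu_0$ Lebesgue measure by (\ref{ass-1}). Together with assertion~1 (which is precisely condition~(i)), these are exactly the hypotheses of Remarks \ref{G-remarks}.3, so by \cite{maagli-87} every potential $p\in\Px$ on $\reald$ admits a unique measure $\mu$ with $p=G\mu$.

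For assertion~2, first $G\mu=\int G_y\,d\mu(y)\in\es$ for any measure $\mu$, since $(x,y)\mapsto G(x,y)$ is lower semicontinuous and the integral of a Borel family of excessive functions against a measure is excessive (cf.\ \cite{BH}). Moreover $G\mu$ is harmonic on $\reald\setminus\supp\mu$: for compact $K$ disjoint from $\supp\mu$ each $G_y$, $y\in\supp\mu$, is harmonic on the interior $K^\circ$ (by assertion~1, as $y\notin K$), so by Fubini $R_{G\mu}^{(K^\circ)^c}(x)=\int R_{G_y}^{(K^\circ)^c}(x)\,d\mu(y)=\int G_y(x)\,d\mu(y)=G\mu(x)$ for $x\in K^\circ$; as such $K^\circ$ exhaust $\reald\setminus\supp\mu$, the superharmonic support of $G\mu$ is contained in $\supp\mu$. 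For the reverse inclusion, assume $G\mu$ is a potential and, for contradiction, harmonic on an open set $W$ meeting $\supp\mu$. From $G\mu=G(1_W\mu)+G(1_{W^c}\mu)$ both summands lie in $\es$ and are $\le G\mu$, hence potentials; $G(1_{W^c}\mu)$ is harmonic on $W$ by the previous step, so $G(1_W\mu)$ is a potential harmonic on $W$, whence $G(1_W\mu)=R_{G(1_W\mu)}^{W^c}$. By the balayage identity for the symmetric Green function~$G$ (cf.\ \cite{BH}), $R_{G(1_W\mu)}^{W^c}=G\nu$ for a measure $\nu$ carried by the closed set $W^c$; uniqueness (assertion~3) then gives $1_W\mu=\nu$, so $\mu(W)=0$, contradicting $W\cap\supp\mu\ne\emptyset$. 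Hence $\supp\mu$ equals the superharmonic support of $G\mu$.

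The main obstacle is assertion~1, and within it the harmonicity of $G_y$ off $\{y\}$: the setting is (possibly) non-local, so path-continuity is unavailable and one must argue through the approximating potentials $V_0\varphi_\varepsilon$ and a convergence/Harnack argument rather than any ``the process leaves $V$ through $\partial V$'' reasoning. The reverse inclusion in assertion~2 is also somewhat technical, being what forces the logical dependence on the uniqueness of assertion~3 and on the balayage identity $R_{G\nu}^{W^c}=G\nu'$ with $\nu'$ carried by $W^c$, which is at our disposal because $G$ is symmetric.
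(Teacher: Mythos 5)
Your outline for parts (2) and (3) is sound: part~(3) via \cite{maagli-87} is explicitly endorsed in the paper (``We note that (3) also follows from \cite{maagli-87}''), and your Fubini/balayage argument for the support statement in part~(2) is a reasonable alternative to the paper's citation of \cite{HN-rep-potential}. The trouble is in part~(1), where there are two genuine gaps.

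First, the continuity of $g$ on $(0,\infty)$. You invoke ``continuity of $g(|x-\cdot|)$ at $y$'' to justify the pointwise convergence $V_0\varphi_\varepsilon(x)\to g(|x-y|)$ for $x\ne y$, but continuity is not among the hypotheses. The assumptions give only that $g$ is decreasing and $G_0$ is lower semicontinuous (being excessive), and a decreasing, lower semicontinuous function is right continuous but can still have left jumps. The paper proves continuity of $G_0$ off the origin as a separate step, using the fine-topological input of Proposition~\ref{cone} and Corollary~\ref{ball-reg}: for a radial decreasing $G_0$, fine continuity at $z$ together with the fine density of $B(0,|z|)$ in $\ov B(0,|z|)$ forces $G_0(z)\ge\limsup_{x\to z}G_0(x)$. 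That argument is not optional; without it your pointwise convergence need not hold.

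Second, and more seriously, the ``Harnack-type convergence argument'' for the harmonicity of $G_y$ on $\reald\setminus\{y\}$ is not available in this generality. Theorem~\ref{green-rep} is proved under the hypothesis (\ref{g-ass}) alone, before the stronger scaling/doubling hypothesis (\ref{g-decay}) is imposed; there is no Harnack inequality at your disposal, and in fact establishing one is essentially what the later hypotheses and references like \cite{grzywny} are for. Moreover, the approximants $V_0\varphi_\varepsilon$ are not increasing as $\varepsilon\downarrow 0$, so you cannot use a monotone-limit theorem, and local uniform boundedness on compacts away from $y$ does not give harmonicity of the limit in a non-local balayage space: the mean-value identity $h(x)=\int h\,d\vx^{\uc}$ involves a harmonic measure $\vx^{\uc}$ that can charge all of $U^c$, including points arbitrarily close to $y$ where $V_0\varphi_\varepsilon$ blows up, so neither dominated nor monotone convergence applies to justify passing to the limit in the mean-value identity. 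The paper avoids the limiting argument entirely: fixing $U\subset\subset W$ with $0\notin\ov W$ and $v:=V_01_{B(0,r)}$ (harmonic off $\ov B(0,r)$ by construction), it computes $\vx^\wc(v)=v(x)$ on one hand, and $\vx^\wc(v)=\int_{B(0,r)}\vx^\wc(G_y)\,dy\le\int_{B(0,r)}\vx^{(y+U)^c}(G_y)\,dy\le v(x)$ on the other; the squeeze forces $\vx^{(y+U)^c}(G_y)=G_y(x)$, i.e.\ $G_0(x-y)=\hat R_{G_0}^{\uc}(x-y)$, for a.e.\ $y\in B(0,r)$, and then Theorem~\ref{bal-true}.3 (Borel finely open sets have positive Lebesgue measure) together with fine continuity upgrades this to every $y$, in particular to $y=0$. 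This balayage computation is the crux of the paper's proof and it is precisely the step your sketch glosses over. You should replace the Harnack-limit heuristic by this (or an equivalent) direct argument.

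A smaller point: for $G_y$ being a potential, once continuity off $\{y\}$ and the vanishing of $g$ at infinity are in hand, the conclusion is immediate from the standard fact that a positive superharmonic function tending to zero at infinity is a potential; your detour through the greatest harmonic minorant $h$ and ``the minimum principle on the connected space $\reald$'' needs a justification that a nonnegative harmonic function vanishing at a single point vanishes identically, which is not automatic in a general balayage space and is not something the paper relies on.
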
 

\begin{proof} Having (1) we shall obtain (2) and (3) from (\ref{ass-1}) and  \cite[Lemma 2.1 and Theorem 4.1]{HN-rep-potential}, 
since using Lemma~\ref{VC} we may construct $f\in \C^+(\reald)$, $f>0$,  such that $V_0f\in \C(\reald)$. 
 We note that (3) also follows from \cite{maagli-87}. 

To prove (1) we may assume without loss of generality that $y=0$.   We   know     
that $G_0\in\es$ is not only lower semicontinuous, but also radial and decreasing. 
Therefore, by Proposition  \ref{cone}, $G_0$ is continuous on $\reald\setminus \{0\}$.
Indeed, let $z\in\reald$. 
Then 
$$
\g:=\inf\{ G_0(x):|x|<|z|\}=\limsup\nolimits_{x\to z} G_0(x).
$$
By Corollary \ref{ball-reg}, $G_0(z)\ge   \g$, since $G_0$ is finely continuous.   
So $G_0$ is continuous at~$z$.  
Moreover, $G_0$ vanishes at infinity, since $\lim_{r\to\infty} g(r)=0$.  
Hence $G_0$ is a potential. 

To see that $G_0$ is harmonic on $\reald\setminus \{0\}$, let us fix a bounded open
set~$U\ne\emptyset$ in~$\reald$  and a bounded open neighborhood~W
of~$\ov U$ such that $0\notin \ov W$. We define 
$$
       r:=\frac 13\min\{\dist(\ov U,W^c), \dist (0,\ov W)\}, \qquad v:=V_01_{B(0,r)}=\int_{B(0,r)}  G_y \,dy,
$$
and fix $x\in U$. Since $\ov W\cap \ov B(0,r)=\emptyset$, we know that
\begin{equation}\label{vwv}
              \vx^\wc(v)   =v(x)
\end{equation} 
(immediate consequence of \cite[II.7.1]{BH} or, probabilistically, from the strong Markov property 
for a~corresponding Hunt process). 
Further, for every $y\in B(0,r)$, $G_y\in \es$ and $y+U\subset W$, hence 
$\vx^\wc(G_y)\le  \vx^{(y+U)^c}(G_y)\le  G_y(x)$. 
So we obtain that
\begin{equation*} 
    \vx^\wc(v)=\int_{B(0,r)} \vx^\wc(G_y)\,dy\le \int_{B(0,r)} \vx^{(y+U)^c}(G_y)\,dy\le \int_{B(0,r)} G_y(x)\,dy=v(x).
\end{equation*} 
Having (\ref{vwv}) we see that $\vx^{(y+U)^c}(G_y)=  G_y(x)$  for almost every $ y\in B(0,r)$, where, by translation
invariance, $\vx^{(y+U)^c}(G_y)=\ve_{x-y}^\uc(G_0)=\hat R_{G_0}^\uc(x-y)$. So, for almost every $ y\in B(0,r)$,
\begin{equation}\label{Gxy}
                     G_0(x-y)=\hat R_{G_0}^\uc(x-y).
\end{equation} 
By fine continuity and Theorem \ref{bal-true},3, (\ref{Gxy}) holds for \emph{every}  $y\in B(0,r)$. 
In particular, $G_0(x)=\hat R_{G_0}^\uc(x)=\vx^\uc(G_0)$. This finishes the proof. 
\end{proof}

To get property (\ref{G-local}) (even with $c=1$) it suffices to  define
$$
                \kap(r):=g(r)\inv, \qquad r>0.
$$

For every ball $B$ let $|B|$ denote the Lebesgue measure of $B$   
 and let $\lambda_B$ denote normalized Lebesgue measure on $B$
(the measure on $B$ having density $1/|B|$ with respect to Lebesgue measure). 
From now on, let us replace (\ref{g-ass}) by the following stronger hypothesis.

\begin{ass} 
There exist $C_G\ge 1$ and $0<r_0\le \infty$ such that, for every $0<r< r_0$,   
\begin{equation}\label{g-decay}
       d     \int_0^r  s^{d-1}g(s) \,ds \le C_G \, r^d g(r) 
\end{equation} 
or, equivalently,    
\begin{equation}\label{vg}
       G\lambda_{B(0,r)}(0) = \frac 1{ |B(0,r)|}        \int_{B(0,r)} G_y(0)\, dy \le C _G \,g(r).
\end{equation}                    
\end{ass}

Let us  note   that (\ref{g-decay}) holds with    constant     
 $C_G=(d/\a) C$, if $g$ has  the following \emph{decay property}:  There exists $C>0$   such that  
\begin{equation}\label{g-wlsc}
 g(\g r)\le C \g^{\a-d}  g(r), \qquad \mbox{ for all  } 0<\g<1  \mbox{ and } 0<r< r_0.
\end{equation} 
Indeed, if (\ref{g-wlsc}) holds and $0<r< r_0$, then
$$
       \int_0^r s^{d-1} g(s)\,ds= r^d\int_0^1 \g^{d-1} g(\g r)\,d\g
\le   C r^d g(r) \int_0^1 \g^{d-1}\g^{\a-d}\, d\g=\a\inv   C r^d g(r)
 $$
(of course, the argument shows that we still get  (\ref{g-decay}), if $\g^{\a-d}$ in (\ref{g-wlsc}) is replaced by any $f(\g)\ge 0$
with $\int_0^1 \g^{d-1} f(\g)\,d\g<\infty$).

Moreover, we observe that, by  (\ref{vg}),      
\begin{equation}\label{vg-global}
             G\lam_{B(0,r)} (x)\le C_G g(r)\qquad\mbox{ for all } x\in  \reald\mbox{  and } 0<r<r_0.
\end{equation} 
 Indeed, defining $B:=B(0,r)$ and $A:=B\cap B(x,r)$ we obtain,  by symmetry, that
$\int_A G_y(x)\,d\lam_B(y)=\int_A G_y(0)\,d\lam_B(y)$.
Further,  $G_y(x)\le g(r)\le G_y(0)$, if  $y\in B\setminus A$. Therefore 
$\int G_y(x)\,d \lam_B(y)\le \int G_y(0)\,d  \lam_B(y)$. 

And, last but not least, since  $g(r/2)\le g$ on $(0,r/2)$ and $d\int_0^{r/2}   s^{d-1}\,ds=(r/2)^d$,    
 we get the  following \emph{doubling property}:
There exists $1\le C_D\le 2^d C_G$ such that    
\begin{equation}\label{g-doubling}
       g(r/2) \le  C_D g(r),  \qquad \mbox{ for every } 0<r< r_0.
\end{equation}
So $\kap$ satisfies (\ref{doubling}) with $C=C_D$.

Another consequence of (\ref{g-decay}) is the following result (cf.\ \cite[Lemmas 2.5, 2.7]{mimica-vondracek}; 
if~$r_0=\infty$ and $|x|\ge r>0$, then  $g(|x|+r)/g(r)\ge C_D\inv g(|x|)/g(r)$).

\begin{proposition}\label{true-capacity}
For every $B:=B(0,r)$,  $0<r< r_0/4$, the following holds.
\begin{enumerate}
\item[\rm 1.]
For every $x\in \reald$,      
$$
\frac {g(|x|)}{g(r)}\ge  R_1^B(x)\ge C_G\inv \,  \frac{g(|x|+r)}{g(r)}.
$$
\item[\rm 2.] 
Let $\mu$ be the \emph{equilibrium measure} for~$B$, that is, $R_1^B=G\mu$.
Then 
$$
 C_G\inv \kap(r)\le \|\mu\|\le C_D \kap(r).
$$
\item[\rm 3.]
Property {\rm (\ref{rtwor})} holds with $a=C_D^{-2} C_G\inv$  and $\ve=r_0$.  
\end{enumerate}
\end{proposition}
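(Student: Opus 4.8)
The plan is to prove the three parts in order, exploiting that $R_1^B = G\mu$ for the equilibrium measure $\mu$ together with the decay hypothesis (\ref{g-decay}) and its consequences (\ref{vg-global}), (\ref{g-doubling}).

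\medskip

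\noindent\textbf{Part 1.} Fix $B=B(0,r)$ with $0<r<r_0/4$. For the upper bound on $R_1^B(x)$, the point is that $y\mapsto g(|x|)/g(r)$ (viewed as a constant in $y$ times... more precisely) the function $u:=g(r)\inv G_0(\,\cdot\,)$ should be compared: since $g$ is decreasing, $G_0 = g(|\cdot|)\ge g(r)$ on $\ov B$, so $g(r)\inv G_0 \ge 1$ on $B$, and $g(r)\inv G_0\in\es\subset\W$ is a potential; hence $R_1^B\le g(r)\inv G_0$, i.e. $R_1^B(x)\le g(|x|)/g(r)$. For the lower bound, I use the equilibrium identity $R_1^B=G\mu$ with $\supp\mu\subset\ov B$: for $x\in\reald$ and $y\in\ov B$ one has $|x-y|\le |x|+r$, so by monotonicity $G_y(x)=g(|x-y|)\ge g(|x|+r)$, whence $R_1^B(x)=\int G_y(x)\,d\mu(y)\ge g(|x|+r)\|\mu\|$. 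It then remains to bound $\|\mu\|$ from below by $C_G\inv\kap(r)=C_G\inv g(r)\inv$, which is Part 2 and will be proved independently; combining gives $R_1^B(x)\ge C_G\inv g(|x|+r)/g(r)$.

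\medskip

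\noindent\textbf{Part 2.} For the upper bound on $\|\mu\|$: evaluate $R_1^B=G\mu$ at $0$. Since $g$ is decreasing, $G_y(0)=g(|y|)\ge g(r)$ for $y\in B$, so $1\ge R_1^B(0)=\int G_y(0)\,d\mu(y)\ge g(r)\|\mu\|$, giving $\|\mu\|\le g(r)\inv$; but we want the sharper $\|\mu\|\le C_D g(r)\inv=C_D\kap(r)$ --- actually the crude bound $\|\mu\|\le g(r)\inv\le C_D g(r)\inv$ already suffices since $C_D\ge1$. For the lower bound on $\|\mu\|$: integrate $R_1^B=G\mu$ against $\lam_B$. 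On one hand $\int R_1^B\,d\lam_B\le 1$. On the other hand, Fubini gives $\int R_1^B\,d\lam_B=\int\bigl(\int G_y\,d\lam_B\bigr)d\mu(y)=\int G\lam_B(y)\,d\mu(y)$, and by (\ref{vg-global}), $G\lam_B(y)\le C_G g(r)$ for every $y$; but that is the wrong direction. Instead use symmetry and a \emph{lower} bound: for $y\in\ov B$ and $z\in B$, $|y-z|<2r<r_0/2$, so $G\lam_B(y)=\int_B g(|y-z|)\,dz/|B|\ge g(2r)\ge C_D\inv g(r)$ by (\ref{g-doubling}). Hence $1\ge C_D\inv g(r)\|\mu\|$... again a bound with $C_D$. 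To land the stated $C_G\inv g(r)\le\|\mu\|$ one instead observes $R_1^B\ge R_1^B$ at a point interior, or better: evaluate $G\lam_B(0)\le C_G g(r)$ via (\ref{vg}) and note $R_1^B\ge g(r)\inv G\lam_{B(0,r)}$ fails monotonicity; the cleanest route is $\int_B R_1^B\,d\lam_B = \int G\lam_B\,d\mu$ with the \emph{upper} estimate $G\lam_B\le C_G g(r)$ giving nothing, so I will instead lower-bound $R_1^B$ pointwise on a fixed sub-ball by Part~1 and integrate; since $R_1^B(x)\ge C_G\inv g(2r)/g(r)\ge (C_GC_D)\inv$ on $\ov B$... this circularity means I should derive $\|\mu\|\ge C_G\inv\kap(r)$ directly from $R_1^B=G\mu\le 1$ is impossible; rather, from $G\mu(0)\le1$ is the wrong side. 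The correct argument: $\|\mu\| = \mu(\ov B)$ and $G\lam_B(0)\le C_G g(r)$ gives, since $G\lam_B$ is $\es$ and $\ge g(2r)\ge C_D\inv g(r)$ on $\ov B\supset\supp\mu$... I will use $\int G\lam_B\,d\mu\le \sup_{\ov B}G\lam_B\cdot\|\mu\|$ paired with $\int G\lam_B\,d\mu=\int R_1^B\,d\lam_B\le1$ only bounds $\|\mu\|$ from \emph{above}. Thus for the lower bound I instead note $R_1^B=1$ quasi-everywhere on $B$, so $\int_B G\mu\,d\lam_B$... I will settle this by: $C_G g(r)\ge G\lam_B(0)=\int G_z(0)d\lam_B(z)\ge$ nothing direct. \emph{Resolution:} the honest proof of $\|\mu\|\ge C_G\inv\kap(r)$ uses that $G\mu=R_1^B\le1$ everywhere and $G\mu(y)\ge$ a lower bound forcing mass; concretely, for $y$ with $|y|\ge r$, $G\mu(y)\le1$ while $G\mu(y)\ge g(|y|+r)\|\mu\|$; but also by hypothesis (\ref{vg}) applied after a translation one shows $\|\mu\|\cdot C_G g(r)\ge \int_B G\mu\,d\lam_B$; since $G\mu=1$ nearly everywhere on $B$ (the equilibrium potential equals $1$ on $B$ up to a polar set, and $B$ is finely dense in $\ov B$ by Corollary \ref{ball-reg}, and Lebesgue-null polar sets carry no $\lam_B$ mass by Theorem \ref{bal-true}.3), $\int_B G\mu\,d\lam_B=1$, whence $\|\mu\|\ge (C_G g(r))\inv=C_G\inv\kap(r)$. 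This is the key step and the main obstacle --- it requires knowing $R_1^B=1$ Lebesgue-a.e. on $B$, which follows since $R_1^B\ge\hat R_1^B$ equals $1$ q.e.\ on $B$ and polar sets are Lebesgue-null.

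\medskip

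\noindent\textbf{Part 3.} To verify (\ref{rtwor}) with $a=C_D^{-2}C_G\inv$ and $\ve=r_0$: fix $x$ and $0<r<r_0$, and set $E:=B(x,r)\setminus\ov B(x,r/2)$. By translation invariance assume $x=0$. One shows $R_1^E(0)\ge a$. Let $\mu$ be the equilibrium measure of $B(0,r)$ and $\mu_{1/2}$ that of $B(0,r/2)$; by Part~2, $\|\mu\|\ge C_G\inv g(r)\inv$ and $\|\mu_{1/2}\|\le C_D g(r/2)\inv\le C_D^2 g(r)\inv$ using (\ref{g-doubling}) twice --- actually $g(r/2)\inv\le (C_D g(r))... $ wait, $g(r/2)\le C_D g(r)$ gives $g(r/2)\inv\ge C_D\inv g(r)\inv$, wrong direction; since $g$ decreasing, $g(r/2)\ge g(r)$, so $g(r/2)\inv\le g(r)\inv$, hence $\|\mu_{1/2}\|\le C_D g(r)\inv$. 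The measure $\mu-\mu_{1/2}$ should be concentrated on the annulus $\ov E$ (this is the classical fact that the equilibrium measure of an annulus, or the difference of equilibrium measures, lives on the outer sphere and the region between); more robustly, I use $R_1^{\ov B(0,r)} - R_1^{\ov B(0,r/2)}$: both are potentials, their difference is $\ge0$ on $B(0,r/2)^c$ and its Riesz measure sits in $\ov B(0,r)\setminus B(0,r/2)=\ov E\,$(ish), and evaluating at $0$: $R_1^{\ov E}(0)\ge R_1^{\ov B(0,r)}(0)-R_1^{\ov B(0,r/2)}(0)$? That inequality is false in general. Instead: $R_1^E(0)=\int_E$-type; I will argue via $R_1^{\ov B(0,r)} = G\mu$ and note $G(1_{\ov E}\mu)(0)=G\mu(0)-G(1_{\ov B(0,r/2)}\mu)(0)$, and $1_{\ov B(0,r/2)}\mu$ has total mass $\le\|\mu\|$ with $G$-potential at $0$ at most $g(r/2)\|1_{\ov B(0,r/2)}\mu\|$-bounded... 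The clean statement: since $G\mu=1$ on $B(0,r)$ Lebesgue-a.e., and $G(1_{\ov B(0,r/2)}\mu)(0)\le \|\mu\| g(0+)=\infty$ --- no. I will instead directly lower-bound $R_1^E(0)$ by observing that the \emph{balayage} of $G\mu$ onto $E^c$ from $0$ is small, or simply: $R_1^E \ge R_1^{\ov B(0,r)} - R_p$ where $p$ is the part of $G\mu$ coming from $\mu|_{\ov B(0,r/2)}$, evaluated via $\|\mu|_{\ov B(0,r/2)}\|\le C_D g(r)\inv$ and $\sup_{\ov B(0,r/2)}G_y$ near $0$... This final reduction --- extracting enough equilibrium mass near the outer boundary of $B(0,r)$ --- is the expected main obstacle, and I anticipate the authors handle it by the mass estimates of Parts 1--2 combined with $G\mu(0)=1$ and (\ref{g-doubling}), yielding the constant $a=C_D^{-2}C_G\inv$.
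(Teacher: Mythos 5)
Your Parts 1 and 2 are essentially correct. Your Part~2 lower bound, after several false starts, lands on the paper's actual argument: $\int G\lam_B\,d\mu = \int G\mu\,d\lam_B = 1$ (using Fubini, symmetry of $G$, and $G\mu = R_1^B = 1$ on $B$; this is an exact equality, not just ``$\le 1$'' as you first write, which is what unblocks the argument), combined with $G\lam_B \le C_G g(r)$ from (\ref{vg-global}). Your Part~2 upper bound via $1 = G\mu(0) \ge g(r)\|\mu\|$ is a small simplification over the paper, which instead uses $1 = \int G\lam_B\,d\mu$ together with $G\lam_B \ge C_D^{-1}g(r)$ on $\supp\mu$ (a consequence of (\ref{p-lower}) and doubling). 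Your Part~1 lower bound deduces the result from Part~2 ($R_1^B(x)\ge g(|x|+r)\|\mu\|$ and $\|\mu\|\ge C_G^{-1}/g(r)$), which is a legitimate reordering with no circularity; the paper instead proves Part~1 self-contained: $C_G^{-1}g(r)^{-1}G\lam_B \le 1$ everywhere by (\ref{vg-global}) and it is a potential supported in $\ov B$, so by the minimum principle $R_1^B \ge C_G^{-1}g(r)^{-1}G\lam_B \ge C_G^{-1}g(|x|+r)/g(r)$, using (\ref{p-lower}).

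Part~3 is a genuine gap, and you acknowledge as much. All of your attempted reductions --- equilibrium measure of the annulus, subtracting equilibrium potentials of the two balls, extracting outer mass from the equilibrium measure of $B(0,r)$ --- are the dead ends you identify, and none of them is the intended route. The paper's argument is much more direct and bypasses the annulus's equilibrium measure entirely. Write the target annulus as $B(x,4r)\setminus \ov B(x,2r)$ with $0 < r < r_0/4$ (so the outer radius ranges over $(0,r_0)$, giving $\ve = r_0$). Pick $x' := x + (0,\dots,0,3r)$, so that $B(x',r)\subset B(x,4r)\setminus \ov B(x,2r)$. By monotonicity of the reduced function, $R_1^{B(x,4r)\setminus\ov B(x,2r)}(x)\ge R_1^{B(x',r)}(x)$, and by translation invariance $R_1^{B(x',r)}(x) = R_1^{B(0,r)}(x-x')$ with $|x - x'| = 3r$. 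Part~1 then gives $R_1^{B(0,r)}(x-x')\ge C_G^{-1}g(4r)/g(r)$, and two applications of the doubling property (\ref{g-doubling}) give $g(4r)\ge C_D^{-2}g(r)$, yielding exactly $a = C_D^{-2}C_G^{-1}$. The idea you were missing is to lower-bound the reduced function of the annulus by the reduced function of a single small ball inscribed in it, and then evaluate Part~1's lower bound off-center (at distance $3r$ from the small ball's center) rather than trying to work with the annulus itself.
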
 

\begin{proof}   
The first inequality in (1) holds, since $G_0\in \es$ and $g(r)\inv G_0\ge 1$ on $B$.
Moreover, we know that $G\lam_B\in \es\cap \C(\reald)$ and $G\lam_B$ vanishes at infinity.
 By (\ref{vg-global}) and the minimum principle (or \cite[II.7.1]{BH}),  
\begin{equation}\label{R1} 
R_1^B\ge (C_Gg(r))\inv  G\lam_B. 
\end{equation} 
For every $x\in\reald$, we have  $g(|x-y|)\ge g(|x|+r)$, $y\in B$, and hence 
\begin{equation}\label{p-lower}
 G\lam_B(x)\ge  g(|x|+r). 
\end{equation} 
Clearly, (\ref{R1}) and (\ref{p-lower}) imply the second inequality in (1).

Using the doubling property (\ref{g-doubling}), we conclude from (\ref{p-lower})  that
\begin{equation}\label{p-lower-2}
       G\lam_B(x) \ge  g(|x|+r)\ge  g(2r) \ge C_D\inv   g(r), \qquad\mbox{for every }x\in B.
\end{equation} 
Moreover, $G\mu=R_1^B=1$ on $B$, hence, by Fubini's theorem and the symmetry of $G$,  
$$
         1=\int G\mu\,d\lam_B=\int\bigl(\int G(y, x)\,d\mu(x)\bigr) \,d\lam_B(y)=\int G\lam_B\,d\mu.
$$
Therefore (2) follows from (\ref{vg-global})  and (\ref{p-lower}).    

Finally,  
 by (1),  we have $R_1^B\ge C_G\inv g(4r)/g(r)\ge C_D^{-2} C_G\inv$ on $\partial B(0,3r)$.
\hbox{If~$x\in\reald$} and $x':=x+(0,\dots,0,3r)$, then $|x'-x|=3r$,  
$B(x',r)\subset  B(x,4r)\setminus \ov B(x,2r)$,  and hence, using translation invariance,  
$$
R_1^{B(x,4r)\setminus \ov B(x,2r)} (x)\ge R_1^{B(x',r)} (x) \ge C_D^{-2} C_G\inv.
$$
\end{proof} 
 
We now obtain the following. 

\begin{theorem}\label{iso} 
Let $\mathbbm P=(P_t)_{t>0} $ be a right continuous sub-Markov semigroup on~$\reald$, $d\ge 1$, 
such that the function $G_0: x\mapsto g(|x|) $ is $\mathbbm P$-excessive, where 
 $g>0$ is  decreasing, $0<g<\infty$ on $(0,\infty)$, $\lim_{r\to 0} g(r)=g(0)=\infty$, $\lim_{r\to \infty} g(r)=0$, 
and {\rm(\ref{g-decay})}~holds.
Further, suppose that the potential kernel $V_0$ \!of \,$\mathbbm P$ is given by convolution with $G_0$.  

\begin{enumerate}          
\item[\rm 1.] 
Let  $h\colon (0,1)\to (0,1)$,  $\lim_{t\to 0} h(t)=0$,  let $\delta>0$ and $\vp\in \C(\reald)$, $\vp >0$. 
Then  there exist a locally finite set $Z$ in $\reald$ and \hbox{$0<r_z<\vp(z)$}, 
\hbox{$z\in Z$},  
such that the closed balls $\ov B(z,r_z)$ are pairwise disjoint,
the union  of all $\ov B(z,r_z)$ is unavoidable, and 
$\sum_{z\in Z} \kap(r_z) h(r_z)<\delta$.
 \item[\rm 2.] 
Let $Z\subset \reald$ and $r_z>0$, such that the  union of all $\ov B(z,r_z)$,     
$z\in Z\subset \reald$, is~unavoidable. Then
\begin{equation}\label{nec}
\sum\nolimits_{z\in Z}  g(|z|) \kap ( r_z)=  \sum\nolimits_{z\in Z}  \frac{g(|z|)} {g(r_z)} =           \infty.
\end{equation} 
In particular,  $\sum_{z\in Z} \kap r_z=   \infty$. 
\end{enumerate}
\end{theorem}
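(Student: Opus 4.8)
The plan is to deduce assertion~1 directly from Theorem~\ref{unavoidable-bal}, and assertion~2 from the tail-divergence argument behind~(1c) in Lemma~\ref{simple}, fed with the upper capacity estimate of Proposition~\ref{true-capacity}.1. For assertion~1 I would simply verify that the standing hypotheses of Section~\ref{champagne-bal} are all in force here, with $X=\reald$, $\rho$ the Euclidean metric, $\kap:=1/g$ and $\rho_0:=r_0$. By Theorem~\ref{bal-true}, $(\reald,\es)$ is a balayage space in which points are polar and the function~$1$ is harmonic; by Theorem~\ref{green-rep} the function $G(x,y):=G_0(x-y)$ satisfies~(i) and~(ii); since $G(x,y)\,\kap(\rho(x,y))=g(|x-y|)/g(|x-y|)=1$ for $x\ne y$, the local comparison~(\ref{G-local}) holds with $c=1$; the doubling property~(\ref{doubling}) is precisely~(\ref{g-doubling}) with $C=C_D$; and~(\ref{rtwor}) holds with $a=C_D^{-2}C_G^{-1}$ and $\ve=r_0$ by Proposition~\ref{true-capacity}.3. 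An application of Theorem~\ref{unavoidable-bal} with $\psi:=\vp$ then produces a locally finite $Z\subset\reald$ and radii $0<r_z<\vp(z)$ such that the balls $\ov B(z,r_z)$ are pairwise disjoint, their union is unavoidable, and $\sum_{z\in Z}\kap(r_z)h(r_z)<\delta$.

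For assertion~2, let $B:=\bigcup_{z\in Z}\ov B(z,r_z)$ be unavoidable. The one estimate needed is that, for \emph{every} $z\in Z$, the function $g(r_z)^{-1}G_0(\,\cdot-z)$ belongs to $\W$ and dominates $1_{\ov B(z,r_z)}$ (because $g$ is decreasing), so that
\[
   R_1^{\ov B(z,r_z)}(x)\le\frac{g(|x-z|)}{g(r_z)}=g(|x-z|)\,\kap(r_z)\qquad(x\in\reald);
\]
this is the first inequality of Proposition~\ref{true-capacity}.1, and no smallness of $r_z$ enters. Now fix an arbitrary finite $J\subset Z$. The set $B_J:=\bigcup_{z\in Z\setminus J}\ov B(z,r_z)$ contains $B$ with a relatively compact set removed, hence is unavoidable by~(1a) and~(1b) in Lemma~\ref{simple}; therefore $R_1^{B_J}(0)=1$, and by countable subadditivity of reductions together with the displayed bound at $x=0$,
\[
   1=R_1^{B_J}(0)\le\sum_{z\in Z\setminus J}R_1^{\ov B(z,r_z)}(0)\le\sum_{z\in Z\setminus J}\frac{g(|z|)}{g(r_z)}\,.
\]
Since $J$ was an arbitrary finite set, no tail of the series $\sum_{z\in Z}g(|z|)/g(r_z)$ drops below~$1$, so the series diverges; this is~(\ref{nec}). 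Finally, $\sum_{z\in Z}\kap(r_z)=\infty$, because $g(|z|)$ is bounded above off a finite set (in particular when $Z$ is locally finite, as in the output of assertion~1), so that there $\kap(r_z)=1/g(r_z)$ dominates a fixed multiple of $g(|z|)/g(r_z)$ and the divergence carries over.

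The content of assertion~1 is pure bookkeeping: every axiom of Section~\ref{champagne-bal} has already been supplied by Theorems~\ref{bal-true} and~\ref{green-rep} and Proposition~\ref{true-capacity}. The only points requiring a moment's care, both in assertion~2, are that the upper capacity estimate holds with no restriction on the radius and that removing a relatively compact set preserves unavoidability (which is~(1b) in Lemma~\ref{simple}); granted these, the divergence is exactly the argument for~(1c) in Lemma~\ref{simple}.
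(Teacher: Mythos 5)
Your handling of assertion~1 and of the divergence~(\ref{nec}) matches the paper exactly: for assertion~1, the verification that the standing hypotheses of Section~\ref{champagne-bal} are met (balayage space and polarity of points by Theorem~\ref{bal-true}, properties (i)--(ii) by Theorem~\ref{green-rep}, (\ref{G-local}) with $c=1$ since $G(x,y)\kap(\rho(x,y))\equiv 1$, (\ref{doubling}) via (\ref{g-doubling}), (\ref{rtwor}) via Proposition~\ref{true-capacity}.3) reproduces what the paper sets up before Theorem~\ref{iso}, and the appeal to Theorem~\ref{unavoidable-bal} is precisely the paper's one-line proof of~1. Your derivation of (\ref{nec}) is also the paper's: you in effect re-prove Lemma~\ref{simple}(c) from parts (a) and (b), while the paper simply cites~(c); both then invoke the upper estimate $R_1^{\ov B(z,r_z)}(0)\le g(|z|)/g(r_z)$, and you are right that this bound needs no restriction on $r_z$ (only $G_0\in\es$ and monotonicity of $g$).

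The gap is in the very last sentence. You assert that ``$g(|z|)$ is bounded above off a finite set,'' but the hypothesis of assertion~2 is an arbitrary $Z\subset\reald$; nothing prevents $Z$ from accumulating at the origin, in which case there are infinitely many $z$ with $g(|z|)$ arbitrarily large, and your claim is false. Your parenthetical ``in particular when $Z$ is locally finite'' is exactly the case you cover, but the theorem is not restricted to it, and the paper flags this explicitly: ``not having assumed that the set $Z$ is locally finite, we have to work a little.'' The paper's workaround is needed and does not follow from what you wrote: first, either infinitely many $z$ satisfy $\kap(r_z)\ge\kap(1)$ (then divergence is immediate), or only finitely many do, which may be discarded by Lemma~\ref{simple}(b); then all $r_z<1$, so the balls with $|z|<1$ all lie in the relatively compact set $B(0,2)$ and may also be discarded by~(b); for the remaining $Z'$ (whose union is still unavoidable) one has $g(|z|)\le g(1)$, and reapplying the (\ref{nec})-argument to $Z'$ yields $\sum_{z\in Z'}\kap(r_z)=\infty$, whence $\sum_{z\in Z}\kap(r_z)=\infty$. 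Without this two-step reduction your proof establishes $\sum_{z\in Z}\kap(r_z)=\infty$ only under an additional locally-finiteness hypothesis that the theorem does not make.
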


\begin{proof}
1. Consequence of Theorem \ref{unavoidable-bal}.

2.  By Lemma \ref{simple},(c),  $\sum_{z\in Z} R_1^{\ov B(z,r_z)}(0)=\infty$. By Proposition~\ref{true-capacity},1
and translation invariance, $ R_1^{\ov B(z,r_z)}(0)\le g(|z|)/g(r_z)$, for every $z\in Z$. So (\ref{nec}) holds.

The proof will be finished using  Lemma \ref{simple},(b). However, not having assumed that the set $Z$ is locally finite,
we have to work a little.
Clearly, $\sum_{z\in Z} \kap (r_z)=   \infty$, unless the set of all $z\in Z$ such that  $\kap (r_z)\ge \kap(1)$ is finite. 
By Lemma \ref{simple},(b), it hence suffices to consider the case, where $\kap(r_z)<\kap(1)$ and hence $r_z<1$,
for every $z\in Z$. But then, of course, the balls $\ov B(z,r_z)$ with $|z|< 1$ are contained in~$B(0,2)$. Hence,
applying Lemma \ref{simple}(b) once more, we may assume without loss of generality that $|z|\ge 1$
for every $z\in Z$. Having (\ref{nec}) we now immediately see that  $\sum_{z\in Z} \kap r_z=   \infty$, since $g(|z|)\le g(1)$,
whenever $|z|\ge 1$.            
\end{proof}

\section{Application to censored stable processes}\label{section-censored}

Throughout this section let  $U$ be a (non-empty) bounded $C^{1,1}$ open set  in $\reald$, $d\ge 2$,
and $\a\in (1,2)$. Let  $\mathfrak X$ be the censored $\a$-stable process on $U$
(see \cite{bogdan-burdzy-chen, chen-kim, chen-kim-song}) and let $E_{\mathfrak X}$ denote the set of all excessive functions 
for $\mathfrak X$. 

We  claim that $(U, E_{\mathfrak X})$ is a balayage space satisfying the assumptions
made in Section \ref{champagne-bal}  
and that the following analogue of Theorems \ref{riesz-hausdorff} and  \ref{a-stable} holds.

\begin{theorem}\label{censored-hausdorff}
\begin{enumerate}  \item[\rm (1)] 
 Let $\psi\in \C(U)$,  $0<\psi\le \dist(\cdot,\uc)$,   
and $h\colon (0,1)\to (0,1)$ with
 $\lim_{t\to 0} h(t)=0$.
Then, for every $\delta>0$,
there is a locally finite set~$Z$ in~$U$ and \hbox{$0<r_z<\psi(z)$},
\hbox{$z\in Z$},  
such that the closed balls $\ov B(z,r_z)$ are pairwise disjoint,
the union  of all~$\ov B(z,r_z)$ is unavoidable in $U$, and 
$\sum_{z\in Z} r_z^{d-\a} h(r_z)<\delta$.
\item[\rm (2)] 
 Let $\phi$ be a measure function with \hbox{$\liminf_{t\to 0} \phi(t) t^{\a-d}=0$}.
Then there exists a~relatively closed set $A$ in $U$ such that $A$ is unavoidable, \hbox{$m_\phi(A)=0$}
and, for every connected component~$D$ of~$U$, the set~$D\setminus A$ is connected.
\item[\rm (3)]
There exists a relatively closed set $A$ in $U$ such that $A$ is unavoidable,
the Hausdorff dimension of $A$ is $d-\a$, and, for every connected component~$D$ of~$U$, 
the set~$D\setminus A$ is connected.
\end{enumerate} 
\end{theorem}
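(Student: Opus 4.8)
The plan is to show that $(U,E_{\mathfrak X})$ fits the framework of Section \ref{champagne-bal} and then let the abstract results do the work. First I would recall, from \cite{bogdan-burdzy-chen,chen-kim,chen-kim-song}, that for $\a\in(1,2)$ and $U$ a bounded $C^{1,1}$ open set the censored $\a$-stable process $\mathfrak X$ is a conservative Hunt process on $U$ that never approaches $\partial U$; consequently $(U,E_{\mathfrak X})$ is a balayage space on which the constant $1$ is harmonic, single points are polar (since $\a<2\le d$), and $\mathfrak X$ possesses a symmetric Green function $G$. The decisive input is the sharp two-sided Green function estimate: away from $\partial U$ one has $G(x,y)\asymp|x-y|^{\a-d}$, and the boundary-decay factors in the general estimate are bounded above and below by positive constants as soon as $|x-y|$ is small compared with $\delta_U(x)$ and $\delta_U(y)$. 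I would take $\rho$ to be Euclidean distance and $\kap(r):=r^{d-\a}$. Then (iv) holds with $C=2^{d-\a}$; (i) and (ii) follow from the explicit $G$ together with a Riesz-type representation of potentials, exactly as in Theorem \ref{green-rep} (or \cite{maagli-87}); (iii) is precisely the local Green estimate just quoted, with $\ve$ on a compact $K\subset U$ chosen so small that $|x-y|\ll\delta_U(x)$ for $y\in B(x,\ve)$; and (\ref{rtwor}) follows from the lower Green bound by the argument of Proposition \ref{true-capacity},3. Part (1) of the theorem is then immediate from Theorem \ref{unavoidable-bal}, applied to this balayage space with $\kap(r)=r^{d-\a}$.

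For parts (2) and (3) I would imitate the proof of Theorem \ref{riesz-hausdorff}, replacing the exact scaling invariance used there (which fails for the censored process) by the local comparison (iii). Start from part (1) with $\psi(z):=\tfrac12\dist(z,\uc)$, obtaining a union $B$ of pairwise disjoint closed balls $\ov B(z,r_z)$, $z\in Z$, which is unavoidable in $U$ and whose bubbles satisfy $r_z<\tfrac12\dist(z,\uc)$; then for $x\in\ov B(z,r_z)$ one has $\delta_U(x)\gtrsim r_z\asymp|x-y|$ for $y$ in the bubble, so $G$ is comparable to the Riesz kernel on $\ov B(z,(1+\ve)r_z)$ with constants independent of $z$. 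Given a measure function $\phi$ with $\liminf_{t\to0}\phi(t)t^{\a-d}=0$, use Theorem \ref{existence} to pick a compact $F\subset B(0,1)$ of Cantor type with $B(0,1)\setminus F$ connected, $m_\phi(F)=0$, and $F$ nonpolar for the isotropic $\a$-stable process; by the local comparison (capacity being a local notion, comparable Green functions give comparable capacities) the censored-process balayage satisfies $\inf\{R_1^{z+r_zF}(x)\colon x\in\ov B(z,r_z)\}\ge\kappa'$ for a constant $\kappa'>0$ independent of $z$. Hence $A:=\bigcup_{z\in Z}(z+r_zF)$ is unavoidable in $U$ by Proposition \ref{AB-unavoidable},2. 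Since $r_z<1$ and $\phi$ is increasing, any cover of $F$ by balls $B(x_n,\rho_n)$ with $\sum\phi(\rho_n)<\ve$ rescales to a cover of $z+r_zF$ by balls of radii $r_z\rho_n$ with $\sum\phi(r_z\rho_n)\le\sum\phi(\rho_n)<\ve$, so $m_\phi(z+r_zF)=0$ for every $z$ and therefore $m_\phi(A)=0$. Connectedness of $D\setminus A$ for each component $D$ of $U$ follows as in Theorem \ref{main}: the sets $z+r_zF$ have empty interior, lie in pairwise disjoint balls disjoint from $A$ on their boundary spheres, and each $B(z,r_z)\setminus(z+r_zF)$ is connected, so any path in $D$ joining two points of $D\setminus A$ (meeting only finitely many bubbles, $Z$ being locally finite) can be pushed off $A$. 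This gives (2); part (3) follows by taking $\phi(t):=t^{d-\a}(\log^+\tfrac1t)\inv$, which forces the Hausdorff dimension of $A$ to be at most $d-\a$, while it is at least $d-\a$ because $A$ carries harmonic measure, hence is nonpolar, and every non-$\a$-polar set has Hausdorff dimension at least $d-\a$ (\cite{landkof}; nonpolarity for $\mathfrak X$ and for the isotropic process coincide at small scales inside $U$).

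The step I expect to be the main obstacle is the starting point of the second paragraph: making precise, and uniform in $z\in Z$, the assertion that on the bubbles $\ov B(z,r_z)$ the censored-process Green function — equivalently, capacities and hitting probabilities of small subsets — is comparable to that of the isotropic $\a$-stable process with constants not depending on $z$. This requires combining the global sharp Green estimates for censored stable processes with the geometric fact that $r_z\ll\delta_U(z)$ along the bubbles of the champagne subdomain we build, and checking that killing $\mathfrak X$ upon leaving a small ball perturbs its jump kernel and Green function only by uniformly bounded factors. Verifying the balayage-space axioms and property (\ref{rtwor}) is comparatively routine once $G$ and its estimates are in hand.
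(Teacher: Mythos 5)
Your outline matches the paper's proof almost exactly: verify the champagne-bal axioms (Section 5) for $(U,E_{\mathfrak X})$ with $\kappa(r)=r^{d-\alpha}$ via the sharp Green function estimates, invoke Theorem \ref{unavoidable-bal} for part (1), and for parts (2)--(3) replace each bubble by a scaled copy $z+r_zF$ of a nonpolar Cantor compact from Theorem \ref{existence}, using a uniform Green function comparison on the bubbles to get the hitting lower bound needed for Proposition \ref{AB-unavoidable},2. The ``main obstacle'' you flag at the end --- a comparison between the censored Green function and the Riesz kernel, uniform over the bubbles --- is in fact already resolved in the paper by Lemma \ref{GGmu}: since $r_z^{\alpha-1}\lesssim\delta_U(z)$, (\ref{G-riesz-est}) gives the two-sided bound $c^{-1}G_0\le G\le c G_0$ near each bubble with a constant $c$ independent of $z$, and the paper then avoids a vague ``comparable capacities'' step by the concrete device of pushing forward the measure $\nu$ on $F$ (with $G_0\nu\le1$ and $G_0\nu\ge\gamma$ on $\overline B(0,1)$) under $T_z(x)=z+r_zx$ and normalizing by $r_z^{d-\alpha}$; this uses the exact affine scaling of the Riesz kernel to get $G_0\nu_z\le1$ and $G_0\nu_z\ge\gamma$ on $\overline B(z,r_z)$ uniformly, and then Lemma \ref{GGmu} transfers this to $G\nu_z$, giving $R_1^{F_z}\ge c^{-2}\gamma$ on $\overline B(z,r_z)$ by the minimum principle. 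So your plan is sound; you have correctly identified the one step that needs care, and the resolution is precisely the scaling-plus-Lemma-\ref{GGmu} argument that the paper carries out.
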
 

For $x,y\in U$, let   
\begin{equation*} 
\gr(x,y):=|x-y|^{\a-d}   \und \du(x):=\dist(x,\uc).
\end{equation*} 
Let $V_0$ be the potential kernel of $\mathfrak X$. By \cite[p.\ 599 and Theorem 1.3]{chen-kim}, there exists 
a~unique (symmetric) function $G\colon U\times U\to (0,\infty]$ such that 
$G$ is continuous off the diagonal, $G=\infty$ on the diagonal, and
\begin{equation*} 
       \int_U G(\cdot,y) f(y)\,dy=V_0f,\qquad\quad f\in \B^+(U).
\end{equation*} 
Moreover, there exists $c>1$ such that, defining $\Psi(x,y):=\du (x) \du (y) |x-y|^{-2(\a-1)}$,
\begin{equation}\label{G-est}
 c\inv \min\{1, \Psi  \}\, G_0 \le G\le  c \min\{1,\Psi\}\, G_0   \on U\times U.
\end{equation} 

 In particular, if  $x\in U$, $\ve\in (0,1)$ with $\ve^{\a-1}<\du(x)/4$, and $y\in \ov B(x,2\ve)$, then  
\begin{equation}\label{G-riesz-est}
       c\inv  G_0(x,y) \le G(x,y)\le  c  G_0(x,y)
\end{equation} 
(we have  $\du(y)\ge \du(x)-\ve>\du(x)/2$ and hence $\Psi(x,y)>(1/2)\du(x)^2 \ve^{-2(\a-1)}>1$).

\begin{lemma}\label{GGmu} 
For every   measure $\mu$ on $U$ the following holds:
\begin{itemize}
\item[\rm (i)] $G\mu\le c \gr \mu$.
\item[\rm (ii)] If $z\in U$, $\ve\in (0,1)$ with $\ve^{\a-1}\le \du (z)/5$,     
and $\mu$ is supported by $\ov B(z,\ve)$,    then $\gr\mu\le c G\mu$ on $\ov B(z,\ve)$.
\item[\rm (iii)]  If $\gr\mu\in\C(U)$, then $G\mu\in \C(U)$.    
\end{itemize} 
\end{lemma}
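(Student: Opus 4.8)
The plan is to derive all three assertions from the two-sided estimate (\ref{G-est}) and its local refinement (\ref{G-riesz-est}), recalling that on $U\times U$ the kernel $\gr$ equals $G_0$, so that $\gr\mu=\int\gr(\cdot,y)\,d\mu(y)$. For (i): since $\min\{1,\Psi\}\le1$, the upper bound in (\ref{G-est}) gives $G\le c\,\gr$ pointwise on $U\times U$, and integration against $\mu$ yields $G\mu\le c\,\gr\mu$.

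For (ii) the point is that (\ref{G-riesz-est}) is applicable with \emph{every} point of $\ov B(z,\ve)$ as its centre. Since $\a-1\in(0,1)$ and $\ve\in(0,1)$ we have $\ve<\ve^{\a-1}\le\du(z)/5$; hence for $x\in\ov B(z,\ve)$ we get $\du(x)\ge\du(z)-\ve>\tfrac{4}{5}\du(z)$, so $\ve^{\a-1}\le\tfrac{1}{5}\du(z)<\tfrac{1}{4}\du(x)$, and moreover $\ov B(z,\ve)\subset\ov B(x,2\ve)$. Thus (\ref{G-riesz-est}), applied with $x$ in place of its centre, gives $c\inv\gr(x,y)\le G(x,y)$ for all $x,y\in\ov B(z,\ve)$; integrating this lower bound over $\ov B(z,\ve)$, which by hypothesis carries $\mu$, yields $\gr\mu\le c\,G\mu$ on $\ov B(z,\ve)$.

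For (iii) I would check that $G\mu$ is finite, lower semicontinuous and upper semicontinuous on $U$. Finiteness is (i) combined with $\gr\mu\in\C(U)$; lower semicontinuity follows by Fatou's lemma, since $G\ge0$ is lower semicontinuous on $U\times U$ (clear from (\ref{G-est})). For upper semicontinuity, fix $x_0\in U$ and, for small $\ve>0$, split $\mu=\mu_1+\mu_2$ with $\mu_1:=1_{B(x_0,\ve)}\mu$. For $x\in B(x_0,\ve/2)$ and $y\in U\setminus B(x_0,\ve)$ one has $|x-y|\ge|x_0-y|/2$ (since $|x_0-y|\ge\ve>2|x-x_0|$), hence $G(x,y)\le c\,\gr(x,y)\le c\,2^{d-\a}\gr(x_0,y)$, a bound whose $\mu_2$-integral is at most $c\,2^{d-\a}\gr\mu(x_0)<\infty$ and which is independent of $x$; since $G$ and $\gr$ are continuous off the diagonal, dominated convergence makes $G\mu_2$ and $\gr\mu_2$ continuous at $x_0$. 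Hence $\gr\mu_1=\gr\mu-\gr\mu_2$ is continuous at $x_0$, while $\gr\mu_1(x_0)=\int_{B(x_0,\ve)}\gr(x_0,y)\,d\mu(y)\to0$ as $\ve\to0$ because $\gr\mu(x_0)<\infty$. Combining with (i),
$$\limsup_{x\to x_0}G\mu(x)\le\limsup_{x\to x_0}\bigl(c\,\gr\mu_1(x)+G\mu_2(x)\bigr)=c\,\gr\mu_1(x_0)+G\mu_2(x_0)\le c\,\gr\mu_1(x_0)+G\mu(x_0),$$
and letting $\ve\to0$ gives $\limsup_{x\to x_0}G\mu(x)\le G\mu(x_0)$; thus $G\mu\in\C(U)$.

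The only delicate point is (iii): one must secure a dominating function for $G\mu_2$ valid on a whole neighbourhood of $x_0$ and verify that the near-diagonal remainder $\gr\mu_1$ is at once small and continuous at $x_0$. No deeper input is needed, since (\ref{G-est}) already pins $G$ down to a constant multiple of the explicit Riesz kernel.
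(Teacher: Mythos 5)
Your proofs of (i) and (ii) match the paper's, which simply declares them ``immediate consequences'' of (\ref{G-est}) and (\ref{G-riesz-est}); you correctly unwind (ii) by noting that the $1/5$ in the hypothesis is chosen precisely so that $\du(x)\ge\du(z)-\ve>4\ve^{\a-1}$ for $x\in\ov B(z,\ve)$, making (\ref{G-riesz-est}) applicable with centre $x$ and $\ov B(z,\ve)\subset\ov B(x,2\ve)$.

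For (iii), however, you take a genuinely different and considerably longer route. The paper's proof is a one-line trick: set $\tilde G:=(c+1)\gr-G$, so that $\tilde G\ge\gr$ by part (i), both $G$ and $\tilde G$ are lower semicontinuous on $U\times U$, hence by Fatou both $G\mu$ and $\tilde G\mu$ are lower semicontinuous, and since their sum $(c+1)\gr\mu$ is assumed continuous (hence finite), each summand is also upper semicontinuous, therefore continuous. Your near/far splitting of $\mu$ with a dominated-convergence argument on the far part and a vanishing estimate on the near part is correct — in particular you rightly observe that $\gr\mu(x_0)<\infty$ forces $\mu(\{x_0\})=0$, so $\gr\mu_1(x_0)\to0$ as $\ve\to0$ — but it re-derives from scratch what the complementarity trick gets for free. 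The paper's argument is the one to remember: whenever a positive kernel $G$ is sandwiched as $G\le c\,\gr$ with $G$ lsc and $\gr$ lsc, continuity of $\gr\mu$ propagates to $G\mu$ via the companion kernel $(c+1)\gr-G$.
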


\begin{proof} (i) and (ii) are immediate consequences of (\ref{G-est}) and (\ref{G-riesz-est}).
To prove (iii),  we introduce $\tilde G\colon U\times U\to \left(0,\infty\right]$ such that $\tilde G\ge \gr$       
and $G+\tilde G=(c+1)\gr$. Then the functions $G,\tilde G$ are lower semicontinuous.  
So,  for every measure $\mu$ on $U$, the functions $G\mu$, $\tilde G\mu$ are lower semicontinuous, by Fatou's  lemma,
and their sum is~$(c+1)\gr \mu$. Thus $G\mu\in \C(U)$, if $\gr \mu\in\C(U)$.      
\end{proof} 

\begin{proposition}\label{v-Feller}
The potential kernel $V_0$  is a strong Feller kernel, that is, $V_0(\B_b(U))\subset \C_b(U)$.    
\end{proposition}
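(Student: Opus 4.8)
The plan is to transfer the classical strong Feller property of the $\a$-Riesz potential kernel to $V_0$ by means of the two-sided Green function comparison (\ref{G-est}), which is already packaged in Lemma~\ref{GGmu}. Splitting an arbitrary $f\in\B_b(U)$ into positive and negative parts, it suffices to treat $f\in\B_b^+(U)$, and to such an $f$ I would associate the finite measure $\mu$ on $U$ with Lebesgue density $f$; then $V_0f(x)=\int_U G(x,y)f(y)\,dy=G\mu(x)$, while, in the notation of this section, $\gr\mu(x)=\int_U|x-y|^{\a-d}f(y)\,dy$ for $x\in U$. Boundedness is then immediate from Lemma~\ref{GGmu}(i): since $U$ is bounded and $\a-d>-d$, the constant $\gamma_0:=\sup_{x\in U}\int_U|x-y|^{\a-d}\,dy$ is finite, so $0\le V_0f=G\mu\le c\,\gr\mu\le c\,\|f\|_\infty\,\gamma_0$ on $U$.

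For continuity, by Lemma~\ref{GGmu}(iii) it suffices to show $\gr\mu\in\C(U)$, i.e.\ that the $\a$-Riesz potential of the bounded function $f$ (extended by $0$ outside the bounded set $U$) is continuous. This I would establish by the usual near/far splitting: fix $x_0\in U$ and $\eta>0$, and for $x$ with $|x-x_0|<\eta/2$ write $\gr\mu(x)$ as the integral over $B(x_0,\eta)$ plus the integral over $U\setminus B(x_0,\eta)$. The first part is at most $\|f\|_\infty\int_{B(0,3\eta/2)}|z|^{\a-d}\,dz$, which is independent of $x$ and tends to $0$ as $\eta\to0$ (here $\a>0$ is used); on $U\setminus B(x_0,\eta)$ the kernel $(x,y)\mapsto|x-y|^{\a-d}$ is bounded by $(\eta/2)^{\a-d}$ and continuous in $x$ (here $\a-d<0$), so by dominated convergence the second part is continuous in $x$ at $x_0$. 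Taking $\limsup$ along a sequence $x_n\to x_0$ and then letting $\eta\to0$ yields continuity of $\gr\mu$ at $x_0$. Hence $\gr\mu\in\C(U)$, Lemma~\ref{GGmu}(iii) gives $V_0f=G\mu\in\C(U)$, and with the boundedness bound we conclude $V_0f\in\C_b(U)$.

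No genuine obstacle is expected: the only nontrivial point is the classical continuity of a Riesz potential, and the hypotheses on $U$ enter solely through the sharp Green function estimate (\ref{G-est}), whose relevant consequence here is precisely Lemma~\ref{GGmu}.
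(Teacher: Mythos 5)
Your proposal is correct and follows essentially the same route as the paper: reduce to $f\in\B_b^+(U)$, observe that $V_0f=G\mu$ where $\mu$ has density $f$, and use Lemma~\ref{GGmu} (parts (i) and (iii)) to transfer boundedness and continuity from the Riesz potential $\gr\mu$ to $G\mu$. The paper merely states that the continuity and boundedness of $\gr\mu$ is "well known (and easily verified)"; your near/far split is exactly the standard verification of that claim.
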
 

\begin{proof} Let $f\in\B_b^+(U)$. It is well known (and easily verified) that the function $\int_U \gr(\cdot,y)f(y) \,dy$ 
is continuous and bounded. So, by Lemma \ref{GGmu}, $V_0f\in \C_b(U)$.   
\end{proof} 

\begin{corollary}\label{censored-balayage space}
$(U,E_{\mathfrak X})$ is a balayage space.
\end{corollary}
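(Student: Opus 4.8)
The plan is to verify that $(U,E_{\mathfrak X})$ satisfies the abstract criterion for being a balayage space, which, by the Appendix results quoted earlier (Theorems \ref{II.4.7} and \ref{V-approx}), reduces to exhibiting a suitable sub-Markov resolvent with a proper potential kernel whose potentials are rich enough and whose excessive functions are exactly $E_{\mathfrak X}$. Concretely, I would start from the resolvent $\mathbbm V=(V_\lambda)_{\lambda>0}$ of the censored $\a$-stable process $\mathfrak X$, with $V_0$ its potential kernel. The key structural input is Proposition \ref{v-Feller}: $V_0$ maps $\B_b(U)$ into $\C_b(U)$, i.e.\ $V_0$ is a strong Feller kernel. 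Since $U$ is bounded and $G\le c\,\gr$ with $\gr(x,y)=|x-y|^{\a-d}$ integrable in $y$ over the bounded set $U$, the function $V_0 1$ is bounded; hence $V_0$ is a bounded, and in particular proper, potential kernel.

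The main steps are then: first, using Lemma \ref{GGmu}(iii) together with the continuity and strict positivity of $\gr$ off the diagonal, produce a strictly positive $f\in\C^+(U)$ with $V_0f\in\C_b(U)$ and $V_0f>0$ everywhere (take, e.g., $f>0$ bounded with $\gr f$ continuous, which holds for bounded continuous $f$ of suitable decay near $\partial U$; then $\gr f\in\C(U)$ gives $V_0f\in\C(U)$ by Lemma \ref{GGmu}(iii), and $V_0f>0$ since $G>0$). This furnishes a continuous strictly positive potential, which is exactly the nondegeneracy hypothesis needed to invoke Theorem \ref{II.4.7}/\ref{V-approx}. Second, check the remaining axioms: $E_{\mathfrak X}$ is the set of $\mathbbm V$-excessive functions by definition of the censored process, these functions are lower semicontinuous and closed under the usual lattice and limit operations, and the resolvent equation plus the strong Feller property give the regularity required. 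Third, conclude via the cited Appendix theorem that $(U,E_{\mathfrak X})$ is a balayage space.

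The step I expect to be the main obstacle — really the only nonroutine point — is matching the abstract hypotheses of the balayage-space criterion precisely with the analytic facts available for censored stable processes: one must know that the excessive functions of $\mathfrak X$ genuinely coincide with the $\mathbbm V$-excessive functions in the sense used in \cite{BH}, and that $V_0$ is a \emph{proper} potential kernel (not merely bounded pointwise) so that the general machinery applies. This is where the boundedness of $U$ and estimate \eqref{G-est} are essential; without properness the criterion fails. Everything else — lower semicontinuity of $G\mu$ via Fatou, continuity of $V_0f$ via Lemma \ref{GGmu}(iii), strict positivity via $G>0$ — is immediate from what has already been established in this section.

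Thus the proof is short: combine Proposition \ref{v-Feller} (strong Feller property), the boundedness of $U$ with $G\le c\,\gr$ (properness of $V_0$), and Lemma \ref{GGmu}(iii) (existence of a continuous strictly positive potential $V_0f$), and then apply the Appendix characterization (Theorems \ref{II.4.7} and \ref{V-approx}) to deduce that $(U,E_{\mathfrak X})$ is a balayage space.
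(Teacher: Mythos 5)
Your overall strategy is sound and close in spirit to the paper's: both reduce to an abstract Appendix criterion after establishing the strong Feller property of $V_0$ (Proposition \ref{v-Feller}). But the routes differ. The paper invokes Corollary \ref{process-bal} and verifies $(B_4)$ directly for the Hunt process $\mathfrak X$: (ii) follows from the strong Feller property of $V_0$ together with the approximation $V_0f_n\uparrow v$ from \cite{BH}; (i) is checked by constructing, for $x\ne y$, a specific separating potential $V_01_{B(y,\ve)}$ using $G(x,y)<\infty=\liminf_{z\to y}G(z,y)$; (iii) uses $1\in E_{\mathfrak X}$ and the same potential. You instead route through Corollary \ref{II.4.7-Feller}, obtaining linear separation for free from properness (Theorem \ref{V-approx}) at the price of checking properness explicitly; you are right that boundedness of $U$ plus $G\le c\,\gr$ yields $V_01\in\C_b(U)$, hence $V_0$ proper. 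Both paths are legitimate.

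There is, however, a genuine gap: you never verify the condition $(B_4)$(iii). Corollary \ref{II.4.7-Feller} requires strictly positive $u,v\in E_{\mathfrak X}\cap\C(U)$ with $u/v\to 0$ at infinity, and since $U$ is a bounded open set, ``at infinity'' means approaching $\partial U$. Merely producing a continuous strictly positive potential $V_0 f$ does not discharge this hypothesis --- you must also show it vanishes at $\partial U$. The paper takes $v\equiv 1$ and $u=V_01_{B(y,\ve)}$ and verifies $u\to 0$ at $\partial U$ by the boundary estimate (\ref{G-est}), which forces $G(x,y)\to 0$ as $x\to\partial U$ through the factor $\min\{1,\du(x)\du(y)|x-y|^{-2(\a-1)}\}$. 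This is precisely where the specific boundary decay of the censored Green function enters the argument, and it is the only nontrivial point; you flag properness and the identification $E_{\mathfrak X}=E_{\mathbbm V}$ as the ``main obstacle,'' but those are routine (the latter is automatic for the Hunt process and its transition semigroup, and properness is immediate from boundedness), whereas the vanishing at $\partial U$ is not addressed at all.

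A minor redundancy: you go through Lemma \ref{GGmu}(iii) to produce a continuous strictly positive potential, but Proposition \ref{v-Feller} already gives $V_0(\B_b(U))\subset\C_b(U)$, so $f\equiv 1$ yields $V_01\in\C_b(U)$ with $V_01>0$ (since $G>0$); the detour through constructing $f$ with $\gr f$ continuous is unnecessary.
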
 

\begin{proof} 
Since $V_0(\B^+(U))\subset E_{\mathfrak X}$ (see \cite[II.3.8.2]{BH} or \cite[Proposition 2.2.11]{H-course}) 
and, for every $v\in E_{\mathfrak X}$, 
there exist  $f_n\in \B_b^+(U)$, $n\in\nat$,  such that $V_0f_n\uparrow v$ (see \hbox{\cite[II.3.11]{BH}}  or \cite[Theorem 2.2.12]{H-course}),
we see that $E_{\mathfrak X}$ satisfies property (ii) of (B$_4$). 

Next let $x,y\in U$, $x\ne y$. Since $G(x,y)<\infty=\liminf_{z\to y}  G(z,y)$, there exists $0<\ve<\du (y)$ such that 
$G(x,z)< G(y,z)$, for all $z\in B(y,\ve)$.  
Then $v:=V_01_{B(y,\ve)}\in E_{\mathfrak X}\cap \C(U)$   and $v(x)<v(y)$. 
Moreover, $v\to 0$ at infinity, by~(\ref{G-est}). 

Since $1\in E_{\mathfrak X}$, we conclude that $ E_{\mathfrak X}$ satisfies (B$_4$). Thus $(U,E_{\mathfrak X})$ is a balayage space,
by Corollary \ref{process-bal}.
\end{proof}

\begin{proof}[Proof of Theorem \ref{censored-hausdorff}]  
Let us first verify that the balayage space $(U,E_{\mathfrak X})$ satisfies the assumptions made in Section \ref{champagne-bal}.
To that end let $\rho_0$ be the diameter of $U$ and  $\kap(r):=r^{d-\a}$, $0<r\le \rho_0$.    
Clearly, $\kap$ is strictly increasing 
on $\left(0,\rho_0\right]$ and (\ref{doubling}) holds with $C:=2^{d-\a}$.   
Further, the estimate  (\ref{G-local}) follows immediately 
from (\ref{G-riesz-est}), and the representation (\ref{p-rep}) of potentials is given by Remark \ref{G-remarks}.3. 

The harmonicity
of $G(\cdot, y)$, $y\in U$, on $U\setminus \{y\}$ is already stated in \cite[p.\ 599]{chen-kim}. We may as well get it from
the fact that taking $r_n:=\delta_U(y)/n$,  $n\in\nat$, the functions 
$h_n:= (\lambda^d(B(y,r_n)))\inv V_01_{B(y,r_n)}$ are harmonic on $U\setminus \ov B(y,r_n)$ and converge to $G(\cdot,y)$
locally uniformly on $U\setminus \{y\}$. By (\ref{G-est}), each function $G(\cdot,y)$, $y\in U$, tends to zero at $\partial U$,
and hence  is a potential. 

To obtain (\ref{rtwor}), let us fix a compact $K$ in $U$ and choose $\ve\in (0,1)$ such that 
$\ve^{a-1}<\delta_U/5$ on $K$.     Let  $x\in K$, $0<r<\ve$,  and let $\mu$ denote the equilibrium 
measure for $A:=\ov B(x,(3/4)r)\setminus 
B(x,(2/3)r)$ with respect to Riesz potentials, that is,  $\mu$~is the (unique) measure on $A$ satisfying $G_0\mu\in \C(U)$    
and $G_0\mu=1$ on~$A$. 
By~translation and scaling invariance, the value $\b:=G_0\mu(x)$ does not depend on~$x$ and~$r$.
Of course, $G_0\mu\le 1$, by the minimum principle (for Riesz potentials). 
By Lemma~\ref{GGmu}, $p:=c\inv G\mu\in\mathcal P(U)$ and $p\le G_0\mu$.        
Hence, by the minimum principle (for~$(U,E_{\mathfrak X})$), 
$p\le R_1^{B(x,r)\setminus \ov B(x,r/2)}$.   In particular, $R_1^{B(x,r)\setminus \ov B(x,r/2)}(x)\ge c^{-2} \b$.

So we may apply Theorem \ref{KZ} and obtain (1) in Theorem \ref{censored-hausdorff}. To prove (2) let~$\phi$
be a measure function such that $\liminf_{t\to 0}\phi(t)t^{\a-d}=0$. By Theorem \ref{existence}, there exists a compact~$F$
in~$B(0,1)$ such that $B(0,1)\setminus F$ is connected, $m_{\phi}(F)$=0, and $F$~is nonpolar with respect to Riesz potentials.
So there exists a measure~$\nu\ne 0$ on~$F$ such that $G_0\nu\in\C(\reald)$ and $G_0\nu\le 1$. Let
\begin{equation*} 
               \g:=\inf\{ G_0\nu(x)\colon x\in \ov B(0,1)\}.
\end{equation*} 

We now choose a locally finite set $Z$ in $U$ and 
$0<r_z<\delta_U(z)/5$, $z\in Z$,  such that the closed balls $\ov B(z,r_z)$ are pairwise disjoint and the union of all $\ov B(z,r_z)$,
$z\in Z$, is unavoidable in $U$. Let $A$ be the union of all compact sets $F_z:=z+r_z F$, $z\in Z$. Clearly, $A$ is relatively closed in $U$
and, for every connected component $D$ of $U$, the set $D\setminus A$ is connected. Moreover, $m_\phi(A)=0$, since $m_\phi(F)=0$.

To prove that $A$ is unavoidable,  let $z\in Z$ and let  $T_z$ denote the transformation $x\mapsto z+r_z x$ on $\reald$.
Then the measure $\nu_z:=r_z^{d-\a} T_z(\nu)$ is supported by \hbox{$T_z(F)=F_z$},  $G_0\nu_z\in\C(\reald)$, $G_0\nu_z\le 1$ on $\reald$, 
and $G_0\nu_z\ge \g$ on $\ov B(z,r_z)$. By Lemma \ref{GGmu}, \hbox{$G\nu_z\in \C(U)$}, $c\inv G\nu_z\le  G_0\nu_z\le 1$ on $U$, and
$G\nu_z\ge c\inv G_0\nu_z\ge c\inv \g>0$ on $\ov B(z,r_z)$. Since $\nu_z$ is supported by $F_z$, we see, 
by the minimum principle, that $R_1^{F_z}\ge c\inv G\nu_z$.  In particular, 
\begin{equation*} 
 R_1^{F_z}\ge c^{-2}\g \on  \ov B(z,r_z).            
\end{equation*} 
Thus $A$ is unavoidable, by Proposition \ref{AB-unavoidable},2. 

As before, (3) is a consequence of (2) considering $\phi(t):=t^{d-\a}(\log^+\frac 1t)\inv$.
\end{proof} 

\begin{remark} {\rm
As in Theorem \ref{riesz-hausdorff}, the condition $\liminf_{t\to 0} \phi(t) t^{\a-d}=0$
in (2) of Theorem \ref{censored-hausdorff} is necessary for the statement.
}
\end{remark}

\section{Appendix}

\subsection{Balayage spaces}\label{sec-bal}

 Throughout this section let $X$ be a locally compact space with countable
base. As before, let  $\C(X)$ be the set of all continuous real functions on $X$, let
$\K(X)$ denote  the set of all functions in $\C(X)$ having compact support, and 
let $\B(X)$ be the set of all Borel measurable numerical functions on $X$.

In probabilistic terms, the theory of balayage spaces is the theory of Hunt processes
with proper potential kernel on $X$ such that every excessive function is the supremum of its continuous excessive minorants
and  there are two strictly positive continuous excessive functions $u,v$ such that $u/v$
vanishes at infinity    
 (Corollary~\ref{process-bal}).

We shall introduce balayage spaces by  properties of their positive hyperharmonic functions 
(see \cite[Definition 1.1.3]{H-course}) and give    characterizations in terms of excessive functions for sub-Markov resolvents   
(see \cite[II.3.11, II.4.7, II.7.8 ]{BH} and \cite[Theorem 2.2.12, Theorem 2.3.4, Corollary 2.3.7]{H-course})  and for sub-Markov semigroups
(see \cite[II.4.9 and II.8.6]{BH} and \cite[Corollary 2.3.8]{H-course}). 
For a characterization by properties of an associated family of harmonic kernels
(given by $H_U(x,\cdot):=\vx^\uc$) see \cite[III.2.8 and III.6.11]{BH} or
\cite[Theorems 5.1.2 and 5.3.11]{H-course}. Numerous examples are given in~\cite{BH}; see also \cite[Examples 10.1]{convexity}.  

Let $\W$ be a convex cone of positive lower semicontinuous numerical
functions on~$X$.  The coarsest topology on $X$ which is at least as
fine as the initial topology and for which all functions of $\W$ are continuous
is   the $(\W)$-\emph{fine topology}. For every function $v\colon X\to [0,\infty]$,
the largest finely lower semicontinuous  minorant of~$v$ is  denoted by~$\hat v^f$.

\begin{definition}\label{def-balspace}
 $(X,\W)$ is a \emph{balayage space}, if the 
following holds:
\begin{itemize} 
\item[{\rm (B$_1$)}] $\W$ is \emph{$\sigma$-stable}, that is,
   $\sup v_n\in\W$ for every increasing sequence $(v_n)$
  in $\W$.
\item[{\rm (B$_2$)}] $\widehat{\inf \V}^f\in\W$, for every  subset
  $\V$ of $\W$.
\item[{\rm (B$_3$)}] If $u,v',v''\in\W$ such that $u\le v'+v''$, there
exist $u',u''\in\W$ such that $u=u'+u''$, $u'\le v'$, and $u''\le v''$.
\item[{\rm (B$_4$)}] 
\begin{itemize}
\item[\rm(i)] 
$\W$ is linearly separating.
\footnote{That is,  for all $x,y\in X$, $x\ne y$, and $\lambda\in [0,\infty)$,
 there exists $v\in\W$ such that $v(x)\ne \lambda v(y)$.} 
\item[\rm(ii)] 
 For every $w\in \W$, $               w=\sup\{v\in\W\cap \C(X)\colon v\le w\} $. 
\item[\rm(iii)] 
There are strictly positive $u,v\in\W\cap \C(X)$ such that $u/v\to 0$  at~infinity.   
\end{itemize} 
\end{itemize} 
\end{definition}

A sub-Markov resolvent $\vvl$ on $X$ is a family of 
kernels~$V_\lambda$ 
on $X$ such that, for every $\lambda>0$, the kernel~$\lambda
V_\lambda$  is sub-Markov (that is, $\lambda V_\lambda 1\le 1$) and
$V_\lambda=V_\mu+(\mu-\lambda)V_\lambda V_\mu$, for all~\hbox{$\lambda,\mu\in (0,\infty)$}.  
 The~kernel  $V_0:=\sup_{\lambda>0} V_\lambda$ is called the \emph{potential kernel} of $\mathbbm V$. 
The resolvent $\mathbbm V$  is  \emph{right continuous}, if $\liml \lvl \vp=\vp $, for every $\vp\in\K(X)$.
It is \emph{strong Feller}, if $V_\lambda (\B_b(X))\subset  \C_b(X)$, for every $\lambda>0$. 

A  function $u\in \B^+(X)$  is  \emph{$\mathbbm V$-excessive},    
if $\sup_{\lambda>0} \lambda V_\lambda u= u$.  
Let  $\ev$ denote the set of all $\mathbbm V$-excessive functions.  
The convex cone $\ev$ contains $V_0(\B^+(X))$ and satisfies~(B$_1$).

\begin{theorem}\label{II.4.7}
For every sub-Markov resolvent  $\mathbbm V$ 
on $X$, the following statements are equivalent.
\begin{enumerate} 
\item[\rm 1.]  $(X,E_{\mathbbm V})$ is a balayage space.
\item [\rm 2.] The resolvent $\mathbbm V$ is right continuous, and  
        $E_{\mathbbm V}$  satisfies~$(B_4)$.
\end{enumerate} 
Moreover, if $(X,\ev)$ is a balayage space, then
$\liml \lvl(x,U)=1$  for all $x\in X$ and Borel measurable fine neighborhoods $U$  of $x$.
\end{theorem}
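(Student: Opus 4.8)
The plan is to prove the implications (1)$\Rightarrow$(2) and (2)$\Rightarrow$(1) separately — the first by a Dini-plus-approximation argument, the second by reducing to the classical structure theory of excessive cones of \cite{BH} — and then to deduce the last assertion.

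\emph{(1)$\Rightarrow$(2).} Here $(B_4)$ is part of the definition of a balayage space, so only right continuity, i.e. $\liml\lvl\vp=\vp$ for $\vp\in\K(X)$, is at stake. I would first observe that for $u\in\ev$ the function $\lvl u$ again lies in $\ev$ (the kernels commute, and $\mu V_\mu(\lvl u)=\lvl(\mu V_\mu u)\uparrow\lvl u$ as $\mu\to\infty$), hence — $\ev$ being the cone $\W$ of the balayage space — is lower semicontinuous, and is increasing in $\lambda$ with $\lvl u\uparrow u$; consequently, when $u$ is moreover continuous, $u-\lvl u$ is upper semicontinuous, nonnegative and decreases pointwise to $0$, so by Dini's theorem $\lvl u\to u$ uniformly on each compact. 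Next, by $(B_4)$, every $\vp\in\K(X)$ is a uniform limit of differences $p-q$ of continuous potentials $p,q\in\Px\subseteq\ev$ (a standard consequence of the separation and transience axioms, see \cite{BH}); since $\|\lvl\|\le 1$ by sub-Markovianity, the three-term estimate
\[
\|\lvl\vp-\vp\|_{\infty,K}\le 2\,\|\vp-(p-q)\|_{\infty}+\|\lvl(p-q)-(p-q)\|_{\infty,K}
\]
combined with the preceding step applied to the continuous excessive functions $p$ and $q$ yields $\liml\lvl\vp=\vp$, uniformly on compacta. Approximating $1$ from below by functions of $\K^+(X)$ then also gives $\lvl 1\uparrow 1$, which I record for the last part.

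\emph{(2)$\Rightarrow$(1).} Now $(B_4)$ is assumed, so I must verify $(B_1)$, $(B_2)$, $(B_3)$ for $\W:=\ev$. Property $(B_1)$, the $\sigma$-stability of $\ev$, is immediate from monotone convergence for the kernels $V_\lambda$. Property $(B_3)$, the Riesz decomposition of a member of $\ev$ dominated by a sum of two, is classical for the excessive cone of a (right continuous) sub-Markov resolvent — it rests on the complete maximum principle and Mokobodzki's theory — and I would invoke it from \cite{BH}. The substantial point, and the step I expect to be the main obstacle, is $(B_2)$: for every $\V\subseteq\ev$, the largest finely lower semicontinuous minorant $\widehat{\inf \V}^f$ of $\inf\V$ must again be excessive. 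This is exactly where right continuity enters: it makes the fine topology carried by $\ev$ well behaved, and the fundamental convergence theorem — the r\'eduite of a lower-directed family being attained along a countable subfamily, with its fine regularization an increasing limit of excessivity-preserving resolvent operators — gives $\widehat{\inf \V}^f\in\ev$. Granting this, $(X,\ev)$ satisfies $(B_1)$--$(B_4)$, hence is a balayage space.

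\emph{The last assertion.} Assume $(X,\ev)$ is a balayage space; by the equivalence just proven $\mathbbm V$ is right continuous, so $\lvl 1\uparrow 1$ as noted. Let $U$ be a Borel fine neighbourhood of $x$ and put $A:=X\sms U$, which is finely closed and avoids $x$, hence thin at $x$. The convergence $\lvl 1_A(x)\to 0$ is the resolvent form of this thinness; I would obtain it either analytically, by dominating $1_A$ by an excessive function whose value at $x$ strictly drops under r\'eduite and fine regularization (see \cite{BH}), or — more transparently — by passing to the Hunt process associated with the balayage space and using that, started at $x\in U$ with $U$ finely open, its paths remain in $U$ for a strictly positive time $\tau$ almost surely, whence $\lvl 1_A(x)=E^x\!\bigl[\intoi\lambda e^{-\lambda t}1_A(X_t)\,dt\bigr]\le E^x[e^{-\lambda\tau}]\to 0$. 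Therefore
\[
\lvl(x,U)=\lvl 1_U(x)=\lvl 1(x)-\lvl 1_A(x)\ \xrightarrow[\lambda\to\infty]{}\ 1-0=1,
\]
as claimed.
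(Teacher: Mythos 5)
The paper does not actually prove this theorem: it is stated in the Appendix purely as background, the label \texttt{II.4.7} itself pointing to its source, and the surrounding text explicitly refers the reader to \cite{BH} (II.3.11, II.4.7, II.7.8) and \cite{H-course} (Theorem 2.2.12, Theorem 2.3.4, Corollary 2.3.7). So there is no in-paper argument to compare yours against; I can only assess your sketch on its own terms.

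Your (1)$\Rightarrow$(2) is the clean part: the observation that $\lvl u\in\ev$, that $u-\lvl u$ is upper semicontinuous, nonnegative and decreasing to $0$ when $u\in\ev\cap\C(X)$, and that Dini then gives uniform convergence on compacta is correct, and the extension to $\vp\in\K(X)$ works once one grants the uniform density of $\Px-\Px$ in $\K(X)$ (itself a nontrivial but standard consequence of $(B_1)$--$(B_4)$). The substantive direction is (2)$\Rightarrow$(1), and there you verify $(B_1)$ but defer both $(B_2)$ and $(B_3)$ to \cite{BH}. Since $(B_2)$ (stability under fine regularization of arbitrary infima) and $(B_3)$ (Mokobodzki's Riesz decomposition) are exactly what make the excessive cone of a right continuous resolvent a balayage space, this amounts to citing the theorem itself; as an outline of where the work lies it is accurate, but it is not a proof. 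Your argument for the final assertion is correct. One small precision: $U$ is only assumed to be a Borel fine \emph{neighborhood} of $x$, not finely open, so $A:=X\setminus U$ need not be finely closed; what the argument actually uses, and what you in effect use, is only that $A$ is thin at $x$, i.e.\ $P^x[T_A>0]=1$, which already gives $\lvl 1_A(x)\le E^x[e^{-\lambda T_A}]\to 0$.
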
 

\begin{theorem}\label{V-approx} Suppose that $\mathbbm V$ is a sub-Markov resolvent    
such that its potential kernel $V_0$ is \emph{proper}, that is, there exists $g\in \B^+(X)$, $g>0$,  such that $V_0g<\infty$. 
Then $\ev$ is the set of all limits of increasing sequences in $V(\B^+(X))$,  and $\ev$ is linearly separating.  
\end{theorem}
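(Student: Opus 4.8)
The plan is to prove the two assertions separately; the content is in the first. Since $\ev$ contains $V_0(\B^+(X))$ and satisfies (B$_1$), it is stable under suprema of increasing sequences, so every increasing limit of a sequence in $V_0(\B^+(X))$ lies in $\ev$; what remains is the reverse inclusion, that each $u\in\ev$ is such a limit, and for this I would run the resolvent form of Hunt's approximation theorem.

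The structural input is the resolvent identity obtained by letting the second parameter of the resolvent equation tend to $0$,
\[
V_0=V_\mu+\mu V_0 V_\mu=V_\mu+\mu V_\mu V_0\qquad(\mu>0),
\]
together with the standard facts that, for $u\in\ev$, $\mu\mapsto\mu V_\mu u$ increases with supremum $u$, and that $V_\mu g\downarrow 0$ as $\mu\to\infty$ whenever $V_0g<\infty$. This is where \emph{properness} enters: fix a strictly positive finite potential $p:=V_0g$ (with $g\in\B^+(X)$, $g>0$, $V_0g<\infty$). For $u\in\ev$ the supermedian functions $u\wedge(np)$ increase to $u$; their excessive regularizations $u_n:=\sup_{\mu>0}\mu V_\mu(u\wedge np)$ are again excessive (here one uses that $p$ is a \emph{finite} potential), are dominated by the finite potential $np$, and satisfy $\sup_n u_n=u$ (interchange the two suprema, apply monotone convergence, and use $u\in\ev$). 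It then remains to represent each $u_n$ --- an excessive function dominated by a finite potential --- as an increasing limit of potentials of the form $V_0 f_k$, built from the kernels $V_\mu$ via the identity above; a diagonal sequence then exhibits $u$ as an increasing limit of a sequence in $V_0(\B^+(X))$.

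For the second assertion, suppose $x\ne y$ in $X$ and $\lam\in[0,\infty)$ satisfy $v(x)=\lam v(y)$ for every $v\in\ev$. Applying this to the potentials $V_\mu f$ ($\mu\ge 0$, $f\in\B^+(X)$), all of which lie in $\ev$, gives $V_\mu(x,\cdot)=\lam V_\mu(y,\cdot)$ for all $\mu\ge 0$; properness forbids $\lam=0$, and the point-separating property of the resolvent then forces $\lam=1$ and $x=y$, a contradiction. Hence $\ev$ is linearly separating.

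I expect the one genuinely delicate point to be the step singled out above: representing an excessive function dominated by a finite potential as an increasing limit of potentials $V_0 f_k$ (and, before that, the reduction to that case by truncation against $p$). Both hinge essentially on properness, and the resolvent equation must be handled with some care where the kernels take the value $+\infty$; the remainder is routine bookkeeping.
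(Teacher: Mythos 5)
The paper does not prove this theorem; it is stated as a known result with references to {\rm[BH, II.3.11]} and {\rm[H-course, Theorem 2.2.12]}. So I assess your attempt on its own merits rather than against an in-paper proof.

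For the first assertion, your structure is right and matches the standard route: truncate against multiples of the finite potential $p=V_0g$, pass to the excessive regularizations $u_n=\sup_{\mu>0}\mu V_\mu(u\wedge np)$, which satisfy $u_n\le np$ and $\sup_n u_n=u$, and reduce to showing that each excessive $w\le np$ is an increasing limit in $V_0(\B^+(X))$. But you explicitly leave exactly this last step unproven, and it is not ``routine bookkeeping'' --- it is the analytic heart of the theorem. What has to be shown is that $\mu V_\mu w\uparrow w$ and
$\mu V_\mu w = V_0\bigl(\mu(w-\mu V_\mu w)\bigr)$.
The identity follows formally from $V_0 = V_\mu + \mu V_0 V_\mu$, but that algebra is not legitimate when $V_0w$ may be $+\infty$. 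A correct justification runs, e.g., as follows: for $0<\nu<\mu$ the resolvent equation gives
$V_\nu(w-\mu V_\mu w)=\dfrac{\mu V_\mu w-\nu V_\nu w}{\mu-\nu}$,
and $\nu V_\nu w\le n\,\nu V_\nu V_0 g = n(V_0g - V_\nu g)\downarrow 0$ as $\nu\downarrow 0$ because $p=V_0g$ is finite; letting $\nu\downarrow 0$ yields $V_0(w-\mu V_\mu w)=V_\mu w$. This is precisely where properness is used, and it is missing from your write-up; as it stands the central inclusion is asserted rather than proved.

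For the second assertion there is a genuine gap, not just an omitted computation. You write that ``properness forbids $\lambda=0$, and the point-separating property of the resolvent then forces $\lambda=1$ and $x=y$.'' Neither step is available. Properness as defined ($V_0g<\infty$ for some $g>0$) does not by itself give $V_0g(x)>0$, so the case $\lambda=0$ is not excluded; and ``the point-separating property of the resolvent'' is not among the hypotheses --- it is, in effect, the very statement you are trying to prove, so invoking it makes the argument circular. You would need to actually derive a separation statement from the resolvent equation and properness (or identify the additional structural hypothesis that the cited reference uses); you have not done so.
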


\begin{corollary}\label{II.4.7-Feller}
Let $\vvl$ be a right continuous strong Feller sub-Markov resolvent on $X$ such that $V_0$ is proper
and there are strictly positive functions \hbox{$u,v \in \ev\cap \C(X)$} such that  $u/v\to 0$  at infinity.
Then $(X,\ev)$ is a~balayage space. 
\end{corollary}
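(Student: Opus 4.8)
The plan is to invoke Theorem~\ref{II.4.7}: since the resolvent $\vvl$ is right continuous by hypothesis, it suffices to verify that $\ev$ satisfies property~$(B_4)$. Part~(iii) of~$(B_4)$ is exactly the last assumption of the corollary, and part~(i)---that $\ev$ is linearly separating---is part of the conclusion of Theorem~\ref{V-approx}, which applies because $V_0$ is proper. Hence the whole matter reduces to part~(ii): showing that every $w\in\ev$ equals $\sup\{v\in\ev\cap\C(X)\colon v\le w\}$.

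I would first record two elementary facts. \emph{(a)} Since $V_0$ is proper, one may fix $g_0\in\B^+(X)$ with $0<g_0\le 1$ and $V_0g_0\le 1$; this is the routine reduction of the defining condition (``$V_0g<\infty$ for some $g>0$'') to a bounded reference potential. \emph{(b)} If $f\in\B_b^+(X)$ and $V_0f$ is bounded, then $V_0f\in\C_b(X)$. Indeed, by the resolvent equation $V_0=V_\lambda+\lambda V_\lambda V_0$, so $V_0f=V_\lambda f+V_\lambda(\lambda V_0f)$, and since $f$ and $\lambda V_0f$ are bounded Borel functions and $\vvl$ is strong Feller, both $V_\lambda f$ and $V_\lambda(\lambda V_0f)$ lie in $\C_b(X)$.

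Now let $w\in\ev$. By Theorem~\ref{V-approx} there is a sequence $(h_m)$ in $\B^+(X)$ such that $(V_0h_m)$ is increasing with $\sup_m V_0h_m=w$. For $m,n\in\nat$ put $h_{m,n}:=h_m\wedge(ng_0)$; then $h_{m,n}\in\B_b^+(X)$, $V_0h_{m,n}\le nV_0g_0\le n$ is bounded, and $V_0h_{m,n}\le V_0h_m\le w$. By~(b), $V_0h_{m,n}\in\C_b(X)$, and $V_0h_{m,n}\in\ev$ since $V_0(\B^+(X))\subset\ev$; thus each $V_0h_{m,n}$ is a continuous excessive minorant of $w$. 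Since $g_0>0$, $ng_0\uparrow\infty$ pointwise as $n\to\infty$, so $h_{m,n}\uparrow h_m$ and $V_0h_{m,n}\uparrow V_0h_m$; hence $\sup_{m,n}V_0h_{m,n}=\sup_m V_0h_m=w$. This proves $(B_4)$(ii). Therefore $\ev$ satisfies $(B_4)$, and Theorem~\ref{II.4.7} shows that $(X,\ev)$ is a balayage space.

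The only step carrying genuine content is fact~(a), the reduction to a bounded reference potential; everything else is a direct assembly of Theorems~\ref{II.4.7} and~\ref{V-approx} together with the strong Feller property, used through the resolvent equation. (Fact~(a) can be bypassed altogether if one builds the bounded form ``$V_0g_0\le 1$ for some $g_0>0$'' into the definition of a proper potential kernel, as is frequently done.)
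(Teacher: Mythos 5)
Your proof is correct, and it is the natural route: right continuity is assumed, $(B_4)$(iii) is a hypothesis, $(B_4)$(i) comes from Theorem~\ref{V-approx} via properness, and the only content is $(B_4)$(ii), which you settle through the strong Feller hypothesis. The paper states the corollary without spelling out a proof, so your argument is exactly what is left to the reader.

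One remark on your flagged fact~(a). The reduction from ``$V_0g<\infty$ for some $g>0$'' to ``$V_0g_0\le 1$ for some bounded $g_0>0$'' is standard but not entirely trivial: the naive truncation $g\wedge(n1_{\{V_0g<n\}})$ has potential $\le V_0g$ but not obviously $\le n$ off the set $\{V_0g<n\}$, so one needs either the fine continuity of $V_0g$ together with a (pre-balayage) maximum principle, or the usual probabilistic argument. You are right that it is a known reduction, and you are also right that many texts bake this bounded form into the definition of ``proper.'' Apart from that caveat, your fact~(b) via the resolvent equation $V_0=V_\lambda+\lambda V_\lambda V_0$ and the strong Feller property is exactly the correct way to get continuity of $V_0h_{m,n}$, and the truncation $h_{m,n}=h_m\wedge(ng_0)$ then produces the required continuous excessive minorants increasing to $w$. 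The assembly via Theorems~\ref{II.4.7} and~\ref{V-approx} is what the paper intends.
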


A family $\ppt$ of kernels on $X$ is a \emph{sub-Markov semigroup} if
$P_t 1\le 1$ and $P_{s+t}=P_s P_t$, for all $s,t>0$.
It is \emph{right continuous} if $\lim_{t\to 0} P_t  \vp=\vp $, for all \hbox{$\vp\in\K(X)$}.
A  function $u\in\B^+(X)$    is  \emph{$\mathbbm P$-excessive}, 
\hbox{if~$\sup_{t>0} P_t u= u$}. 
Let  $\es$ denote the set of all $\mathbbm P$-excessive functions. 
If $\mathbbm P$ is right continuous (measurability of $(t,x)\mapsto P_tf(x)$, $f\in\K(X)$,  would be sufficient), the connection
to an associated sub-Markov resolvent $\vvl$ is given by $V_\lambda=\int_0^\infty
e^{-\lambda t} P_t \,dt$, $\lambda>0$, we have $E_{\mathbbm P}=E_{\mathbbm V}$   
(see \cite[II.3.13]{BH} or \cite[Corollary 2.2.14]{H-course}), 
and the kernel $V_0=\sup_{\lambda>0} V_\lambda=\int_0^\infty P_t\,dt$ is also called the \emph{potential kernel of~$\mathbbm P$}. 

\begin{corollary}\label{II.4.9}
For every sub-Markov semigroup $\ppt$ on~$X$ the following
holds.
\begin{enumerate} 
\item [\rm 1.] $(X,\es)$ is a balayage space if and only if $\mathbbm P$ is
  right continuous and $\es$  satisfies~$(B_4)$.
\item [\rm 2.] 
If \,$\mathbbm P$ is right continuous, the potential kernel of \,$\mathbbm
P$ is proper, the resolvent~$
\mathbbm V$\! of~\,$\mathbbm P$ {\rm(}or even $\mathbbm P$ \!itself\,{\rm)}
 is strong Feller, and there are strictly positive functions   \hbox{$u,v\in\es\cap \C(X)$} such that
 $u/v\to 0$ at infinity, then $(X,\es)$ is a balayage space. 
\end{enumerate} 
\end{corollary}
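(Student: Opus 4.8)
The plan is to reduce both statements of Corollary \ref{II.4.9} to the already-established resolvent results (Theorem \ref{II.4.7}, Theorem \ref{V-approx}, Corollary \ref{II.4.7-Feller}) by passing from the semigroup $\mathbbm P$ to its associated resolvent $\mathbbm V=(V_\lambda)_{\lambda>0}$, where $V_\lambda:=\int_0^\infty e^{-\lambda t}P_t\,dt$. The crucial bridge, which may be cited from \cite[II.3.13]{BH} or \cite[Corollary 2.2.14]{H-course}, is that $\es=\ev$, i.e.\ the $\mathbbm P$-excessive functions coincide with the $\mathbbm V$-excessive functions whenever $\mathbbm P$ is right continuous. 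Once this identification is in place, every hypothesis and conclusion phrased in terms of $\es$ translates verbatim into one about $\ev$.

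For part 1, I would argue as follows. Suppose first that $(X,\es)$ is a balayage space. Then in particular $\es$ satisfies (B$_4$) by definition, so it remains to show $\mathbbm P$ is right continuous. Here I would invoke the last assertion of Theorem \ref{II.4.7}: a balayage space structure forces $\liml\lvl(x,U)=1$ for Borel fine neighborhoods, which (together with the semigroup–resolvent correspondence) yields $\lim_{t\to 0}P_t\vp=\vp$ for $\vp\in\K(X)$; alternatively one cites \cite[II.4.9 or II.8.6]{BH} directly. Conversely, if $\mathbbm P$ is right continuous then $\mathbbm V$ is right continuous, and since $\ev=\es$ satisfies (B$_4$), Theorem \ref{II.4.7}(2)$\Rightarrow$(1) gives that $(X,\ev)=(X,\es)$ is a balayage space.

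For part 2, I would simply verify that the hypotheses are exactly those of Corollary \ref{II.4.7-Feller} applied to $\mathbbm V$. Right continuity of $\mathbbm P$ gives right continuity of $\mathbbm V$; the potential kernel of $\mathbbm P$ is by definition $V_0=\sup_{\lambda>0}V_\lambda$, so properness transfers directly; the strong Feller property of $\mathbbm P$ (i.e.\ $P_t(\B_b(X))\subset\C_b(X)$ for $t>0$) implies the strong Feller property of $\mathbbm V$ via $V_\lambda=\int_0^\infty e^{-\lambda t}P_t\,dt$ and dominated convergence, and of course if $\mathbbm V$ itself is assumed strong Feller there is nothing to check; finally the strictly positive $u,v\in\es\cap\C(X)$ with $u/v\to 0$ at infinity are, by $\es=\ev$, strictly positive elements of $\ev\cap\C(X)$ with the same decay. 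Corollary \ref{II.4.7-Feller} then yields that $(X,\ev)=(X,\es)$ is a balayage space.

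The only genuinely delicate point is the forward direction of part 1, namely deducing right continuity of $\mathbbm P$ from the mere fact that $(X,\es)$ is a balayage space; this is where one must actually use the fine-topology content of Theorem \ref{II.4.7} (the convergence $\liml\lvl(x,U)\to 1$) rather than formal manipulations, and is the step I would expect to require the most care in citation. Everything else is a routine transcription between the semigroup and its resolvent.
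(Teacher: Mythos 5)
Your reduction of both assertions to the resolvent statements Theorem~\ref{II.4.7} and Corollary~\ref{II.4.7-Feller} via the identification $\es=\ev$ (itself cited from \cite[II.3.13]{BH}) is exactly the route the paper has in mind: the Appendix states \hbox{Theorem~\ref{II.4.7}} and Corollary~\ref{II.4.7-Feller} for resolvents, then inserts the sentence establishing $\es=\ev$ precisely so that Corollary~\ref{II.4.9} follows by this translation, and otherwise simply points to \cite[II.4.9, II.8.6]{BH}. Part~2 and the reverse implication of Part~1 in your write-up are clean transcriptions and are correct as stated, including the dominated-convergence argument showing that strong Feller for $\mathbbm P$ passes to $\mathbbm V$.

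The one place where you have a real gap, and you partially sense it yourself, is the forward implication of Part~1: from ``$(X,\es)$ is a balayage space'' you must extract right continuity of $\mathbbm P$, but the chain you sketch is circular. To invoke the last assertion of Theorem~\ref{II.4.7} you need $(X,\ev)$ to be a balayage space, i.e.\ you need $\es=\ev$, and the paper's bridging sentence only asserts $\es=\ev$ \emph{given} right continuity (or at least joint measurability) of $\mathbbm P$ --- the very property you are trying to establish. Even if one slips in a measurability hypothesis so that $\mathbbm V$ is defined and $\es=\ev$ holds, Theorem~\ref{II.4.7} only delivers right continuity of $\mathbbm V$ together with $\liml\lvl(x,U)=1$; the passage from this resolvent-side continuity back to $\lim_{t\to 0}P_t\vp=\vp$ is a nontrivial Tauberian-type step that you gesture at with ``together with the semigroup--resolvent correspondence'' but do not actually supply. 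The honest version of your plan is therefore to prove the reverse implication and Part~2 as you do, and to defer the forward implication entirely to the cited reference \cite[II.4.9]{BH}, rather than presenting the Theorem~\ref{II.4.7}-based sketch as if it closed the argument.
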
 

Finally, given a Hunt process   $\mathfrak X=(\Omega,\mathfrak M, \mathfrak M_t, X_t,\theta_t,P^x)$ on $X$,
the transition kernels $P_t$, $t>0$, defined by $P_tf(x):=E^x(f\circ X_t)$, $f\in \B^+(X)$, form a right
continuous sub-Markov semigroup $\mathbbm P$ on $X$.  
By definition, $E_{\mathfrak X}:=E_{\mathbbm P}$, and $V_0=\int_0^\infty P_t\,dt$ is the \emph{potential kernel of~$\mathfrak X$}.

\begin{corollary}\label{process-bal}
Let $\mathfrak X=(\Omega,\mathfrak M, \mathfrak M_t, X_t,\theta_t,P^x)$ be a Hunt process on $X$.
Then $(X,E_{\mathfrak X})$ is a balayage space if and only if $E_{\mathfrak X}$ satisfies $(B_4)$.
\end{corollary}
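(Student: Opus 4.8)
The plan is to read this off directly from Corollary \ref{II.4.9}, specialized to the transition semigroup of $\mathfrak X$: a Hunt process carries no structure relevant here beyond its transition kernels, so the statement should reduce to the already-established semigroup characterization. First I would record what is in fact already noted in the paragraph preceding the statement, namely that the transition kernels $P_tf(x):=E^x(f\circ X_t)$, $t>0$, form a \emph{right continuous} sub-Markov semigroup $\mathbbm P$ on $X$. The semigroup identity $P_{s+t}=P_sP_t$ is the Markov property, $P_t1\le 1$ is sub-Markovianity, and right continuity --- i.e.\ $\lim_{t\to 0}P_t\vp=\vp$ for every $\vp\in\K(X)$ --- follows from the right continuity of the paths $t\mapsto X_t$: one has $\vp\circ X_t\to\vp\circ X_0=\vp$ pointwise $P^x$-a.s.\ by continuity of $\vp$, while $|\vp\circ X_t|\le\|\vp\|_\infty$, so dominated convergence gives $P_t\vp(x)\to\vp(x)$. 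By definition $E_{\mathfrak X}=E_{\mathbbm P}$.

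Then I would simply invoke Corollary \ref{II.4.9},1: since $\mathbbm P$ is right continuous, $(X,E_{\mathbbm P})$ is a balayage space if and only if $E_{\mathbbm P}$ satisfies $(B_4)$; substituting $E_{\mathbbm P}=E_{\mathfrak X}$ yields the asserted equivalence. The ``only if'' half is in any case immediate from Definition \ref{def-balspace}: if $(X,E_{\mathfrak X})$ is a balayage space, then the convex cone $E_{\mathfrak X}$, in the role of $\W$, satisfies $(B_1)$--$(B_4)$, in particular $(B_4)$.

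I do not expect a genuine obstacle here: the whole content is packaged in Corollary \ref{II.4.9}, which itself rests on Theorems \ref{II.4.7} and \ref{V-approx}, and the only thing left to verify is the standard (and already stated) right continuity of the transition semigroup of a Hunt process. The one point worth a sentence, for full self-containedness, is that $E_{\mathfrak X}=E_{\mathbbm P}$ coincides with the set of excessive functions of the associated resolvent $V_\lambda=\int_0^\infty e^{-\lambda t}P_t\,dt$ (cf.\ the remark before Corollary \ref{II.4.9} and \cite[II.3.13]{BH}), so that the notion ``excessive for $\mathfrak X$'' is unambiguous; but this too is routine.
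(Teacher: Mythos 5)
Your proposal is correct and follows exactly the route the paper intends: the right continuity of the transition semigroup of a Hunt process (recorded in the paragraph preceding the statement, and which you justify by dominated convergence from the right continuity of the paths) reduces the claim directly to Corollary \ref{II.4.9},1 with $E_{\mathfrak X}=E_{\mathbbm P}$. Nothing is missing.
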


Conversely, the following holds (see \cite{BH}).     

\begin{theorem}\label{bal-res}
Let $(X,\W)$ be a balayage space such that $1\in\W$,    and let \hbox{$p\in \Px$} be a bounded strict potential.\footnote{See 
Section 2 for definitions.} 
Then there exists a unique right continuous strong Feller sub-Markov resolvent $\vvl$ on $X$ such that $V_01=p$  and $\ev=\W$.
Moreover, $\mathbbm V$ is the resolvent of a right continuous sub-Markov semigroup $\ppt$ on $X$, and $\mathbbm P$
is the transition semigroup of a Hunt process $\mathfrak X$ on $X$.
\end{theorem}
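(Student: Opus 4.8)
This statement is the converse of Corollaries \ref{II.4.7-Feller} and \ref{process-bal}, and the plan is to reduce it to the construction carried out in \cite{BH}, the crucial datum being the strict potential $p$. Recall that strictness of $p$ means that two measures $\mu,\nu$ with $\int p\,d\mu=\int p\,d\nu<\infty$ and $\int q\,d\mu\le\int q\,d\nu$ for all $q\in\Px$ must coincide; this is precisely what will pin the potential kernel down uniquely. First I would invoke the existence part of the theory of \cite{BH}: a balayage space $(X,\W)$ carries a right continuous sub-Markov resolvent $\mathbbm V$ with $\ev=\W$, and, since $p$ is a strict potential, the normalization can be chosen so that $V_01=p$. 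Here the hypothesis $1\in\W$ is what forces $1$ to be $\mathbbm V$-excessive, i.e.\ $\lambda V_\lambda 1\le1$ with $\sup_\lambda\lambda V_\lambda 1=1$, consistently with $\ev=\W$; and boundedness of $p$ gives that $V_0$ is a bounded kernel on $\B_b(X)$, with $\|V_0f\|_\infty\le\|p\|_\infty\|f\|_\infty$, so that the $V_\lambda$, $\lambda>0$, may be recovered from $V_0$ through the functional equation $V_\lambda=V_0-\lambda V_\lambda V_0$, solved by a Neumann series for small $\lambda$ and propagated to all $\lambda>0$ by the resolvent equation.

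The strong Feller property is where continuity of $p$ enters. For $f\in\K(X)$, writing $f=f^+-f^-$, the functions $V_0f^\pm\in\W$ satisfy $V_0f^\pm\le\|f^\pm\|_\infty\,p$ with $p\in\Px\cap\C(X)$, hence $V_0f^\pm\in\Px\subset\C(X)$; thus $V_0f$ and then $V_\lambda f$ lie in $\C_b(X)$, and a monotone-class argument on the bounded kernel $V_\lambda$ extends this to all of $\B_b(X)$. For uniqueness, suppose $\mathbbm V'$ is a second right continuous sub-Markov resolvent with $V_0'1=p$ and $E_{\mathbbm V'}=\W$. Both resolvents have the same excessive cone $\W$, hence the same reduction and balayage operators, and by Theorem \ref{V-approx} both $V_0(\B^+(X))$ and $V_0'(\B^+(X))$ generate $\W$ under increasing limits; the common normalization $V_01=p=V_0'1$ together with the strictness of $p$ then forces $V_0=V_0'$, hence $\mathbbm V=\mathbbm V'$. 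This is the step I expect to demand the most care: it is exactly the point at which \emph{strict} (as opposed to merely strictly positive) is used, since for a bounded continuous potential that is not strict time-changed resolvents would violate uniqueness.

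Finally, from the right continuous resolvent $\mathbbm V$ one obtains, by the Hille--Yosida type construction in \cite{BH}, a right continuous sub-Markov semigroup $\ppt$ with $V_\lambda=\intoi e^{-\lambda t}P_t\,dt$, whence $\es=\ev=\W$. To realize $\mathbbm P$ as the transition semigroup of a Hunt process $\mathfrak X$ one appeals to the standard construction of \cite{BH}, which uses that $\mathbbm V$ is strong Feller and right continuous, that $X$ is locally compact with countable base, and that $\W$ separates the points of $X$ and consists of suprema of its continuous members (parts (i) and (ii) of $(B_4)$); the $\W$-fine topology then coincides with the fine topology of $\mathfrak X$, and $E_{\mathfrak X}=\es=\W$. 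The main obstacle throughout is not any single estimate but the bookkeeping: the requirements $V_01=p$, $\ev=\W$, and sub-Markovianity must be kept mutually consistent, and it is the strictness of $p$ that makes the solution of this system unique.
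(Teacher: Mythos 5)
The paper itself does not prove Theorem~\ref{bal-res}: it is stated as a known fact with the citation ``(see \cite{BH})'', and the related representation of potentials by measures is separately attributed to \cite{maagli-87} in Remark~\ref{G-remarks}.3. So there is no in-paper proof to compare against, and your proposal must be judged as a free-standing attempt to reconstruct the argument from \cite{BH}. Your overall plan (construct the resolvent, verify the Feller property, pass to a right continuous semigroup by a Laplace-transform/Hille--Yosida argument, then to a Hunt process, and derive uniqueness from strictness of $p$) is the right shape.

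There is, however, a genuine gap in the strong Feller step. You argue that for $f\in\K^+(X)$, from $V_0f\in\W$ and $V_0f\le\|f\|_\infty\, p$ with $p\in\Px\cap\C(X)$ one gets $V_0f\in\Px\subset\C(X)$. Domination of a $\W$-function by a continuous potential does give that it is a potential, but it does \emph{not} give continuity: for a compact set $E$ with an irregular boundary point $z$ which is a limit of regular points, $\hat R_p^E$ lies in $\W$, is dominated by $p$, is a potential, yet $\hat R_p^E(z)<p(z)=\limsup_{y\to z}\hat R_p^E(y)$, so it is not continuous. Hence $V_0f\le\|f\|_\infty p$ alone does not place $V_0f$ in $\Px$. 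Continuity of $V_0f$ for $f\in\K(X)$ is precisely one of the nontrivial conclusions of the construction in \cite{BH} (carried out through Ray/specific-order arguments), not a formal consequence of domination, and your sketch leaves this unjustified. The uniqueness paragraph is also rather compressed: passing from the strictness of $p$ (a statement about pairs of measures and the cone $\Px$) to $V_0=V_0'$ requires the representation $q=G\mu_q$ of potentials by measures and a comparison of the kernels $V_0(x,\cdot)$, $V_0'(x,\cdot)$ through it; the idea is right and is indeed how \cite{BH} and \cite{maagli-87} proceed, but as written the implication ``$V_01=V_0'1$ and $p$ strict, hence $V_0=V_0'$'' is asserted rather than proved.
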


\begin{remark}
{
The assumption $1\in\W$ is not very restrictive. Indeed, let  $(X,\W)$ be an arbitrary balayage space,
let $v\in \W\cap \C(X)$, $v>0$, and $\widetilde \W:= \{u/v\colon u\in \W\}$.  
 Then $(X,\widetilde \W)$  is a balayage space such that $1\in \widetilde \W$.
}
\end{remark}

Finally, let us mention the possibility of constructing new examples by subordination         
with  convolution semigroups~$(\mu_t)_{t>0}$ on $(0,\infty)$, that is, families
of measures~$\mu_t$ on~$(0,\infty)$ such that $\|\mu_t\|\le 1$, $\mu_s\ast\mu_t=\mu_{s+t}$,  
for all $s,t\in (0,\infty)$, and $\lim_{t\to 0} \mu_t=\ve_0$ (that is, $\lim_{t\to 0} \mu_t(\vp)=\vp(0)$, for all $\vp\in \K((0,\infty))$).
The following result is contained \cite[V.3.6, V.3.7]{BH}). 

\begin{theorem}\label{subordination}
Let $(\mu_t)_{t>0}$ be a convolution semigroup on $(0,\infty)$ such that 
\hbox{$\kappa:=\int_0^\infty \mu_t\,dt$}  is a  Radon measure 
which is absolutely continuous with respect to Lebesgue measure on $(0,\infty)$. 

Moreover, let $\mathbbm P$ be a sub-Markov semigroup on $X$ with strong Feller resolvent
 such that $(X,\es)$ is a balayage space, and define  kernels $P_t^\mu$, $t>0$, by 
$$
       P_t^\mu f:=\int P_sf\,d\mu_t(s), \qquad  f\in\B^+(X).
$$
Then  $\mathbbm P^\mu=(P_t^\mu)_{t>0}$ is a sub-Markov semigroup on $X$ with strong Feller resolvent,   
and $(X,E_{\mathbbm P^\mu})$ is a balayage space.\footnote{If $\|\mu_t\|=1$, for every $t>0$, 
and $\mathbbm P$ is a Markov semigroup, then, of course, $\mathbbm P^\mu$ is a Markov 
semigroup as well.}                                   
\end{theorem}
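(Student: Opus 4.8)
The plan is to exhibit $\mathbbm P^\mu=(P_t^\mu)_{t>0}$ as a right continuous sub‑Markov semigroup whose resolvent is strong Feller and whose potential kernel is proper, and then to read off the assertion from Corollary \ref{II.4.9},2 (equivalently Corollary \ref{II.4.7-Feller}). The semigroup bookkeeping comes first. Since $P_t^\mu 1=\int P_s1\,d\mu_t(s)\le\|\mu_t\|\le1$, each $P_t^\mu$ is sub‑Markov; and, using that right continuity of $\mathbbm P$ makes $(s,x)\mapsto P_sf(x)$ jointly measurable, Tonelli's theorem together with $\mu_s\ast\mu_t=\mu_{s+t}$ gives $P_s^\mu P_t^\mu f=\int\!\!\int P_{u+v}f\,d\mu_s(u)\,d\mu_t(v)=\int P_wf\,d(\mu_s\ast\mu_t)(w)=P_{s+t}^\mu f$. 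For right continuity of $\mathbbm P^\mu$ I would split $P_t^\mu\vp$, $\vp\in\K(X)$, into the integrals over $(0,\delta]$ and over $(\delta,\infty)$: the latter is $\le\|\vp\|_\infty\,\mu_t((\delta,\infty))\to0$ as $t\to0$, and the former tends to $\vp$ because $\mu_t\to\ve_0$ and $\lim_{s\to0}P_s\vp=\vp$.

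Integrating in $t$ identifies the resolvent: $V_\lambda^\mu f=\int_0^\infty e^{-\lambda t}P_t^\mu f\,dt=\int_0^\infty P_sf\,d\sigma_\lambda(s)$ with $\sigma_\lambda:=\int_0^\infty e^{-\lambda t}\mu_t\,dt$, and, letting $\lambda\downarrow0$, $V_0^\mu f=\int_0^\infty P_sf\,d\kappa(s)$. Since $e^{-\lambda t}\le1$ one has $\sigma_\lambda\le\kappa$ as measures; hence $\sigma_\lambda$ is absolutely continuous with respect to Lebesgue measure on $(0,\infty)$, say $d\sigma_\lambda=k_\lambda\,ds$ with $0\le k_\lambda\in L^1((0,\infty))$ and $\|k_\lambda\|_{L^1}=\|\sigma_\lambda\|\le1/\lambda$.

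The hard part will be the strong Feller property of $(V_\lambda^\mu)$, and this is exactly where the absolute continuity of $\kappa$ is used. Here is the idea. The linear span of the exponentials $\{s\mapsto e^{-\xi s}:\xi>0\}$ is dense in $L^1((0,\infty))$ --- an $L^\infty$ function annihilating all of them has identically vanishing Laplace transform, hence is $0$ a.e., and density follows by Hahn--Banach. Given $f\in\B_b(X)$ and $\lambda>0$, pick $k_\lambda^{(n)}(s)=\sum_j c_j^{(n)}e^{-\xi_j^{(n)}s}$ with $\|k_\lambda^{(n)}-k_\lambda\|_{L^1}\to0$. Then $\int_0^\infty P_sf\,k_\lambda^{(n)}(s)\,ds=\sum_j c_j^{(n)}V_{\xi_j^{(n)}}f\in\C_b(X)$ because $\mathbbm V$ is strong Feller, while $\|V_\lambda^\mu f-\sum_j c_j^{(n)}V_{\xi_j^{(n)}}f\|_\infty\le\|f\|_\infty\,\|k_\lambda-k_\lambda^{(n)}\|_{L^1}\to0$ since $|P_sf|\le\|f\|_\infty$ pointwise; thus $V_\lambda^\mu f$ is a uniform limit of bounded continuous functions, so it lies in $\C_b(X)$.

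It then remains to apply the balayage‑space criterion. Every $u\in\es$ is $\mathbbm P^\mu$‑excessive ($P_t^\mu u\le\|\mu_t\|u\le u$, and $\sup_{t>0}P_t^\mu u=u$ because $\mu_t\to\ve_0$ and $P_su\uparrow u$ as $s\downarrow0$), so $\es\subseteq E_{\mathbbm P^\mu}$; since $(X,\es)$ is a balayage space, $E_{\mathbbm P^\mu}\cap\C(X)$ therefore contains strictly positive functions $u,v$ with $u/v\to0$ at infinity. A bounded strict $\mathbbm P$‑potential $p_0$ stays $\mathbbm P^\mu$‑excessive and is moreover a $\mathbbm P^\mu$‑potential, because $P_t^\mu p_0=\int P_sp_0\,d\mu_t(s)\to0$ as $t\to\infty$ ($\mu_t((0,R))\to0$ for every $R$, and $P_sp_0\to0$); together with $\kappa$ being a Radon measure this gives properness of $V_0^\mu$. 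Hence $\mathbbm P^\mu$ is a right continuous sub‑Markov semigroup with proper potential kernel and strong Feller resolvent admitting such $u,v$, so Corollary \ref{II.4.9},2 shows that $(X,E_{\mathbbm P^\mu})$ is a balayage space --- the strong Feller resolvent claimed in the statement having been established above. The only genuinely delicate point is the density‑of‑exponentials step; everything else is routine semigroup and measure manipulation.
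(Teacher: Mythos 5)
The paper gives no proof of its own here; it simply cites \cite[V.3.6, V.3.7]{BH}, so a direct comparison is not possible. Your overall strategy --- verify that $\mathbbm P^\mu$ is right continuous sub-Markov, show the resolvent $\vvl^\mu$ is strong Feller, check properness of $V_0^\mu$, observe $\es\subseteq E_{\mathbbm P^\mu}$ so that $(B_4)(\mathrm{iii})$ is inherited, and conclude via Corollary \ref{II.4.9},2 --- is correct, and your strong Feller argument is the genuine crux. The reduction $V_\lambda^\mu f=\int_0^\infty P_sf\,k_\lambda\,ds$ with $k_\lambda\in L^1$, followed by $L^1$-approximation of $k_\lambda$ by linear combinations of exponentials $e^{-\xi s}$ (density via Hahn--Banach and uniqueness of the Laplace transform), and the uniform estimate $\|V_\lambda^\mu f-\sum c_j V_{\xi_j}f\|_\infty\le\|f\|_\infty\|k_\lambda-\sum c_je^{-\xi_j\cdot}\|_{L^1}$, is precisely where the absolute continuity of $\kappa$ is used, and it is correct.

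The one genuine gap is the properness of $V_0^\mu$. You write that a bounded strict $\mathbbm P$-potential $p_0$ is a $\mathbbm P^\mu$-potential, and that this ``together with $\kappa$ being a Radon measure'' yields properness. That last step is not a proof: the existence of a bounded $\mathbbm P^\mu$-potential does not by itself produce a strictly positive $g$ with $V_0^\mu g<\infty$ (note $V_0^\mu 1=\int P_s1\,d\kappa(s)$ may well be $\equiv\infty$, since $\kappa((0,\infty))$ is typically infinite), and the Radon hypothesis on $\kappa$ does not obviously control the tail of $\int_1^\infty P_sg\,d\kappa(s)$. The gap can be filled using only what you have already established, via the resolvent equation: since $p_0$ is a bounded $\mathbbm P^\mu$-potential, $P_t^\mu p_0$ decreases to $0$, so $g:=p_0-V_1^\mu p_0>0$ pointwise, and
$$V_\lambda^\mu\bigl(p_0-V_1^\mu p_0\bigr)=\frac{V_1^\mu p_0-\lambda V_\lambda^\mu p_0}{1-\lambda}\ \xrightarrow[\lambda\downarrow0]{}\ V_1^\mu p_0\le p_0<\infty,$$
so $V_0^\mu g=V_1^\mu p_0$ is bounded. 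You should include this (or an equivalent) argument; as written, the reader cannot check the properness step. A smaller point: the claim $\mu_t\bigl((0,R]\bigr)\to0$ as $t\to\infty$ deserves a line --- it follows from $\mu_t\bigl((0,R]\bigr)\le e^{\xi R}\int e^{-\xi s}\,d\mu_t(s)=e^{\xi R}e^{-t\phi(\xi)}\to0$ for any fixed $\xi>0$, once one notes $\phi(\xi)>0$ in the non-degenerate case.
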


\subsection{Nonpolar compact sets of Cantor type}
 For the convenience of the reader, we first present a self-contained construction of small nonpolar compact sets 
of Cantor type for classical potential theory as well as  for Riesz potentials on Euclidean space 
(the result itself is a special case of \hbox{\cite[Theorem 5.4.1]{adams-hedberg})}.

Let $d\ge 1$ and $0<\a\le 2$ with $\a<1$ if $d=1$, and let    
$$
                     \kap (r):=(\log\frac 1r)\inv \und G(x,y):=\log \frac 1{|x-y|},
$$
if  $\a=d=2$. In the other cases, let 
\begin{equation}\label{normal} 
                     \kap (r):=r^{d-\a} \und G(x,y):=|x-y|^{\a-d}. 
\end{equation} 

\begin{theorem}\label{existence}
Let $d\ge 2$ and let $\phi$ be a measure function such that 
$$\
\liminf\nolimits_{t\to 0} \phi(t)/\kap(t)=0.
$$
Then there is a~nonpolar compact $F\subset B(0,1)$ of Cantor type such that $m_\phi(F)=0$ and 
$B(0,1)\setminus F$ is  connected.
\end{theorem}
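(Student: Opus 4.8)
\textbf{Proof plan for Theorem \ref{existence}.}

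The plan is to build $F$ as a standard Cantor-type set: fix a rapidly decreasing sequence of ``generation radii'' $1=\rho_0>\rho_1>\rho_2>\cdots$ and a sequence $N_k\in\nat$ of branching numbers, and let $F=\bigcap_{k\ge 0}F_k$, where $F_0=\ov B(0,\rho_0)$ and $F_{k+1}$ is obtained from $F_k$ by replacing each of its $\prod_{j\le k}N_j$ constituent balls of radius $\rho_k$ by $N_{k+1}$ pairwise disjoint closed balls of radius $\rho_{k+1}$ with centres inside it (arranged, say, near the centre so that the complement stays connected). Here I would use the hypothesis $\liminf_{t\to0}\phi(t)/\kap(t)=0$ to choose, along a suitable subsequence, radii $\rho_k$ for which $\phi(\rho_k)/\kap(\rho_k)$ is as small as we like; the freedom in picking $N_{k+1}$ and $\rho_{k+1}$ (with $\rho_{k+1}$ much smaller than $\rho_k/N_{k+1}$, so the $N_{k+1}$ sub-balls genuinely fit disjointly) is what makes both a measure estimate and a capacity estimate work simultaneously.

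First I would arrange $m_\phi(F)=0$. At generation $k$ the set $F_k$ is covered by $M_k:=\prod_{j=1}^k N_j$ balls of radius $\rho_k$, so $M_\phi^{(\rho_k)}(F)\le M_k\,\phi(\rho_k)$. Choosing the $N_j$'s and $\rho_j$'s recursively so that $M_k\,\phi(\rho_k)\to 0$ — for instance, at each step first fix $N_{k+1}$ and then, using $\liminf\phi(t)/\kap(t)=0$ together with $\lim_{t\to0}\phi(t)=0$, pick $\rho_{k+1}$ small enough that $M_{k+1}\phi(\rho_{k+1})<2^{-k-1}$ and also small enough to force the nonpolarity estimate below — gives $m_\phi(F)=\lim_{k}M_\phi^{(\rho_k)}(F)=0$. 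Connectedness of $B(0,1)\setminus F$ is a routine consequence of placing the daughter balls well inside each mother ball and of the fact that a nested intersection of such configurations leaves a connected (indeed path-connected, totally-disconnected-complement-free) residual complement; I would only sketch this.

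The substantive step is nonpolarity of $F$, i.e.\ exhibiting a nonzero measure $\mu$ on $F$ with $G\mu$ bounded (equivalently $\mc(F)=\infty$ in the notation of (\ref{infty}), or finite energy). The natural candidate is the ``uniform'' Cantor measure: put mass $M_k^{-1}$ on each of the $M_k$ generation-$k$ balls, and take the weak limit $\mu$. Its total mass is $1$. To bound $G\mu(x)=\int G(x,y)\,d\mu(y)$ uniformly in $x$, split the integral over the dyadic-like annuli determined by the generations: the part of $F$ within distance $\sim\rho_k$ of $x$ carries $\mu$-mass at most $\lesssim M_{k}^{-1}$ times the number of generation-$k$ balls near $x$, which one controls geometrically, while the kernel there is $\lesssim G(\text{const}\cdot\rho_{k+1})\approx\kap(\rho_{k+1})^{-1}$ (using (\ref{normal}), resp.\ the logarithmic kernel when $\a=d=2$). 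This leads to a bound of the shape $G\mu(x)\le \text{const}\cdot\sum_{k\ge0}(\kap(\rho_{k})\,/\,\kap(\rho_{k+1}))\cdot(\text{local branching factor})$, and the point is that the recursive choice of $\rho_{k+1}$ small relative to $\rho_k,N_{k+1}$ makes this series converge (a geometric-type bound suffices). Hence $G\mu$ is bounded, $\mu$ has finite energy, and $F$ is nonpolar.

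The main obstacle is the bookkeeping in this last estimate: one must choose the two sequences $(\rho_k)$ and $(N_k)$ in the right order so that the \emph{same} choice simultaneously (a) kills $M_k\phi(\rho_k)$, using $\liminf\phi/\kap=0$ only along a subsequence, and (b) keeps the series for $G\mu$ convergent, which wants $\rho_{k+1}\ll\rho_k/N_{k+1}$ and $\kap(\rho_{k+1})$ not too small compared with $\kap(\rho_k)/N_{k+1}$. The trick is that these are not in conflict: at stage $k$ one picks $N_{k+1}$ first (any value $\ge 2$ works for the capacity side) and then drives $\rho_{k+1}\to 0$; the measure-function hypothesis guarantees that $\phi(\rho_{k+1})$ can be made as small as needed \emph{relative to} $\kap(\rho_{k+1})$ infinitely often, so by passing to a subsequence of generations (inserting ``trivial'' generations with $N=1$ in between if necessary) one secures both. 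I would present the recursion explicitly with these two inequalities as the defining constraints and then verify (a) and (b) in a few lines each.
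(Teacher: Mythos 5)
Your overall architecture — a Cantor-type nested construction, the uniform limit measure, a dyadic-annulus bound on its potential, and a balance between the generation radii $\rho_k$ and the branching numbers $N_k$ — is exactly the spirit of the paper's proof (which uses cubes rather than balls, and an inductive potential estimate $G\mu_{m+1}\le G\mu_m+(C+1)2^{-(m-1)}c_1$ rather than a direct annulus sum, but these are cosmetic). There is, however, a genuine error in the one place where the whole thing has to be glued together: the order in which you propose to choose $N_{k+1}$ and $\rho_{k+1}$ is backwards, and your own description of the constraints makes this internally inconsistent.

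You correctly record that the potential bound requires ``$\kap(\rho_{k+1})$ not too small compared with $\kap(\rho_k)/N_{k+1}$,'' i.e.\ a \emph{lower} bound on $\rho_{k+1}$ of the form $\kap(\rho_{k+1})\gtrsim\kap(\rho_k)/N_{k+1}$ — with $N_{k+1}$ fixed, $\rho_{k+1}$ may not be taken arbitrarily small, or else the term $M_k^{-1}\kap(\rho_{k+1})^{-1}$ in your series blows up and $F$ is polar. Two sentences later you propose to ``pick $N_{k+1}$ first (any value $\ge2$ works for the capacity side) and then drive $\rho_{k+1}\to0$,'' which violates that very constraint. The difficulty cannot be cured by ``inserting trivial generations with $N=1$'': such generations keep $M_k$ fixed while $\kap(\rho_{k})$ drops, so they only worsen the term $M_k^{-1}\kap(\rho_{k+1})^{-1}$. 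Moreover, the hypothesis $\liminf_{t\to0}\phi(t)/\kap(t)=0$ only gives you a \emph{sparse} set of radii at which $h=\phi/\kap$ is small; you cannot assume such a radius lands in a prescribed interval $[\rho_{\min},\rho_{\max}]$ dictated by a previously chosen $N_{k+1}$. The correct order (and what the paper does) is the reverse: first choose $\rho_{k+1}$ small and at a point where $h(\rho_{k+1})$ is as small as you need — the $\liminf$ hypothesis lets you pick \emph{which} small radius — and then define $N_{k+1}$ (or $n$ with $N_{k+1}=n^d$) to be essentially $\kap(\rho_k)/\kap(\rho_{k+1})$, which both keeps $M_{k+1}\kap(\rho_{k+1})$ bounded above and below (so the potential series converges) and makes $M_{k+1}\phi(\rho_{k+1})=M_{k+1}\kap(\rho_{k+1})\,h(\rho_{k+1})$ small (so $m_\phi(F)=0$). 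Once you swap the quantifiers in this way, the rest of your outline goes through.
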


Let us first establish a simple scaling property for potentials of Lebesgue measure on cubes 
(see (\ref{capQ}) below).
Let 
$$
 K:=[- (2\sqrt d)\inv, (2 \sqrt d)\inv]^d
$$ 
so that the diagonal of $K$ has length $1$. For every $a>0$, let 
$$
     \Q_a:=\{ x+ a K\colon x\in \reald\}, 
$$
and, for  every  cube $Q$ in $\reald$, let $\mu_Q$ denote normalized Lebesgue measure on $Q$.
There exists a constant $c=c(d)$  such that    
\begin{equation}\label{capQ}
           c\inv  \le  \kap(a)\, \|G\mu_Q\|_\infty\le c   \qquad
                    (0<a\le 1/e,\,Q\in \Q_a) .
\end{equation} 
Indeed, let us assume for the moment that $\a=d=2$.   
 Since the function $t\mapsto t\log\frac 1t$
is increasing on~$\left(0,1/e\right]$, we obtain that, for all $0<\alpha <\b\le 1/e$,
\begin{equation}\label{polco}
 2\pi    \b^2 \log\frac 1\b \ge  2\pi\int_{\alpha }^\b t\log\frac 1t \,dt  =  \int_{B(0,\b)\setminus B(0,\alpha )} \log\frac 1{|x|} \, dx
                     \ge 2\pi  (\b-\alpha )  \alpha \log\frac 1{\alpha }.  
\end{equation} 
Knowing that $B(0,a/(2\sqrt 2))\setminus B(0,a/(4\sqrt 2))\subset aK$ and $aK\subset B(x,a)$, for $x\in aK$, the estimate
(\ref{capQ}) follows easily from (\ref{polco}).
In the other cases, for every $a>0$,
 $\|G\mu_{x+aK}\|_\infty=\|G\mu_{aK}\|_\infty=a^{\a-d}\|G\mu_K\|_\infty$, 
since  $\mu_{aK}$ is the image of $\mu_K$ under the scaling $x\mapsto ax$. 

We prove Theorem \ref{existence} by a recursive construction of a decreasing sequence
of finite unions $F_m$ of  cubes and probability measures $\mu_m$ on $F_m$, $m\in \nat$. 
Of course, we shall finally define  $F:=\bigcap_{m\in \nat} F_m$.

Let $K_1:=(1/e)K$ and $c_1:=\|G\mu_{K_1}\|_\infty$.             
We start with $F_1=K_1$ and the measure $\mu_1:=\mu_{K_1}$. Let us suppose
that $m\in \nat$ and that after $m-1$ construction steps we have a probability measure
$$
                      \mu_m= \frac 1M\sum\nolimits_{1\le i\le M}  \mu_{Q_i}    \on \   F_m=Q_1\cup \dots \cup Q_M,
$$
where $M\in\nat$, $0<a\le 1/e$ and $Q_1,\dots, Q_M\in \Q_a$ are pairwise disjoint such that, for every $1\le i\le M$,
\begin{equation}\label{est-mq}
                                                       \frac 1M \|\mu_{Q_i}\|_\infty \le 2^{-(m-1)} c_1 
\end{equation} 
(true for $m=1$ with $M=1$ and  $a=1/e$). 

Our $m$-th construction step is the following: 
For $n\in\nat$ and $0<r<(1/2)a/n$ (to be fixed below),  we ``cut'' each $Q_i$ into $n^d$ cubes 
$Q_{i1},\dots,Q_{in^d}$ in~$\Q_{a/n}$  in the canonical way. For each $1\le j\le n^d$, let  $Q_{ij}'$ be the cube in $\Q_r$
having the same center as $Q_{ij}$ (see Figure 1).
\begin{center}
\includegraphics[width=13cm]{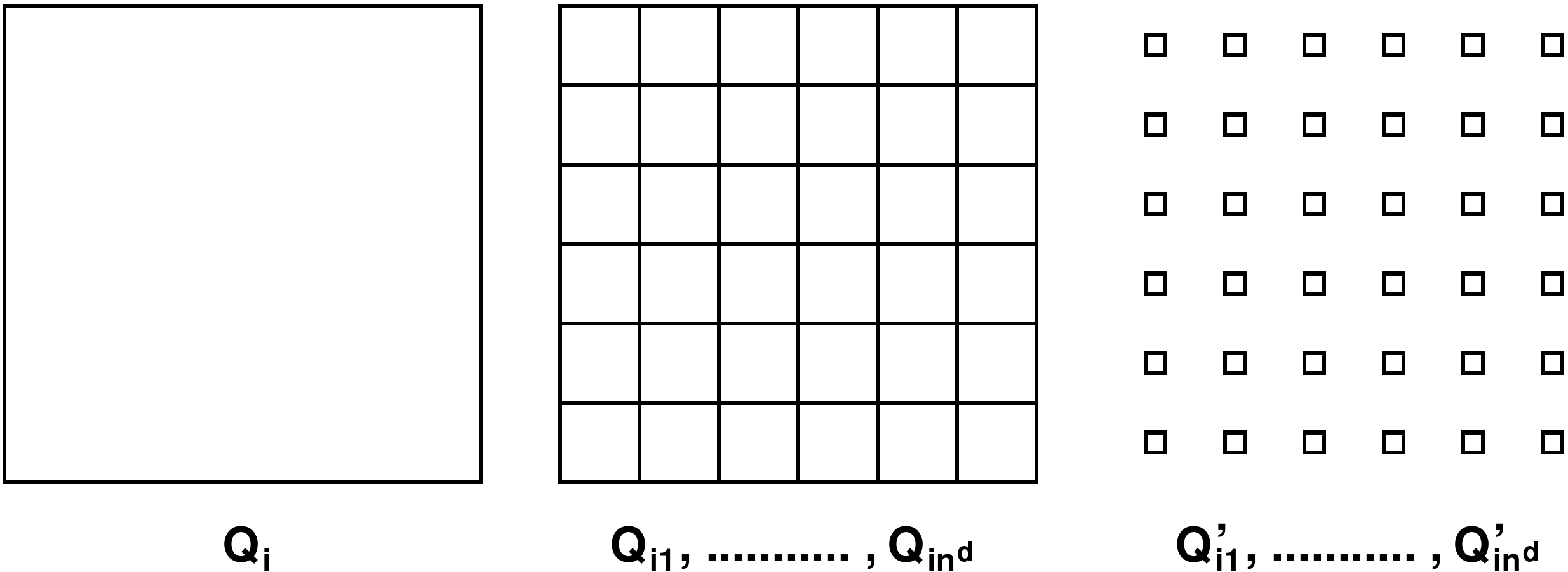}\\[2mm]
\raisebox{0cm}{Figure 1. The construction step}
\end{center} 
 Finally, let
$$
\mu_{m+1}:=\frac 1{M} \sum\nolimits_{1\le i\le M} \nu_i, \qquad\mbox{ where } 
 \nu_i:= \frac 1{n^d} \sum\nolimits_{1\le k\le n^d} \mu_{ Q_{ik}'}.
$$
We note that each $\nu_i$ is a probability measure on $Q_i$, $1\le i\le M$,  and that $\mu_{m+1}$ is a~probability measure on
 \begin{equation}\label{fm1}
F_{m+1}:=\bigcup\nolimits_{1\le i\le M,\, 1\le j\le n^d} Q_{ij}'.
\end{equation} 
Let $h:=\phi/\kap$.   
We may  choose $n\in\nat$ and $r\in (0,1/m)$    such that the following holds:
\begin{itemize}
\item[\rm (i)]
For all $i,j\in\{1,\dots, M\}$, $i\ne j$, $|G\nu_j-G\mu_{Q_j}|< 2^{-m} c_1$ on $Q_i$,
\item [\rm (ii)]
 $h(r)< 1/(3mM c^2\kap(a)) $,
\item [\rm (iii)]
$2c^2\kap(a)<n^d\kap(r)<3c^2\kap(a)$ and $2r<a/n$.
\end{itemize} 
Indeed, by uniform approximation,  there exists $n_0\in\nat$ such that (i) holds if $n\ge n_0$. 
We may assume without loss of generality that    $n^d/(n-1)^d< 3/2 $ and $3c^2\kap(a)
<n^{d}\kap(a/(2n))$ for all $n\ge n_0$.  Since $\lim_{t\to 0} \kap(t)=0$ and $\liminf_{t\to 0} h(t)=0$, 
there exists $r\in (0,1/m)$ such that $n_0^d\kap(r)< c^2\kap(a)$ and  (ii)~holds.  Let $n\in\nat$ be minimal
such that $n^d\kap(r)>2 c^2\kap(a)$. Then $n>n_0$, $(n-1)^d\kap(r)<2c^2\kap(a) $,  and
hence $n^d\kap(r)< (3/2) (n-1)^d\kap(r)<3 c^2\kap(a) $. Moreover, 
$\kap(r)<3c^2\kap(a)n^{-d}<\kap(a/(2n))$, and hence $r<a/(2n)$.
So (iii) holds as well.
      
Since each $Q_{ij}'$ is contained in an open ball of radius $r<1/m$, we obtain, by~(ii) and~(iii), that  
\begin{equation}\label{mphi}
      M_\phi^{1/m}(F_{m+1})\le M n^d \phi(r)= M n^d \kap(r) h(r)  < 3 Mc^2\kap(a) h(r)<1/m. 
\end{equation} 
                 
For the moment, let us fix $1\le i\le M$, $1\le j\le n^d$, and consider 
\begin{equation}\label{qij}
Q:=Q_{ij}'\in \Q_r.
\end{equation} 
Let $1\le k\le n^d$,    $k\ne j$. If $x\in Q$, $y\in Q_{ik}$ and $y'\in Q_{ik}'$ (see Figure 2 for a case, where $Q$ and
$Q_{ik}$ are close to each other), then $|x-y'|\ge (2\sqrt d)\inv a/n$         
and $|y'-y|\le a/n$, hence $|x-y|\le |x-y'|+|y'-y|\le (1+2\sqrt d) |x-y'|$ and 
$
|x-y'|^{-1}\le (1+2\sqrt d) |x-y|\inv
 $.
\begin{center}
\includegraphics[width=8cm]{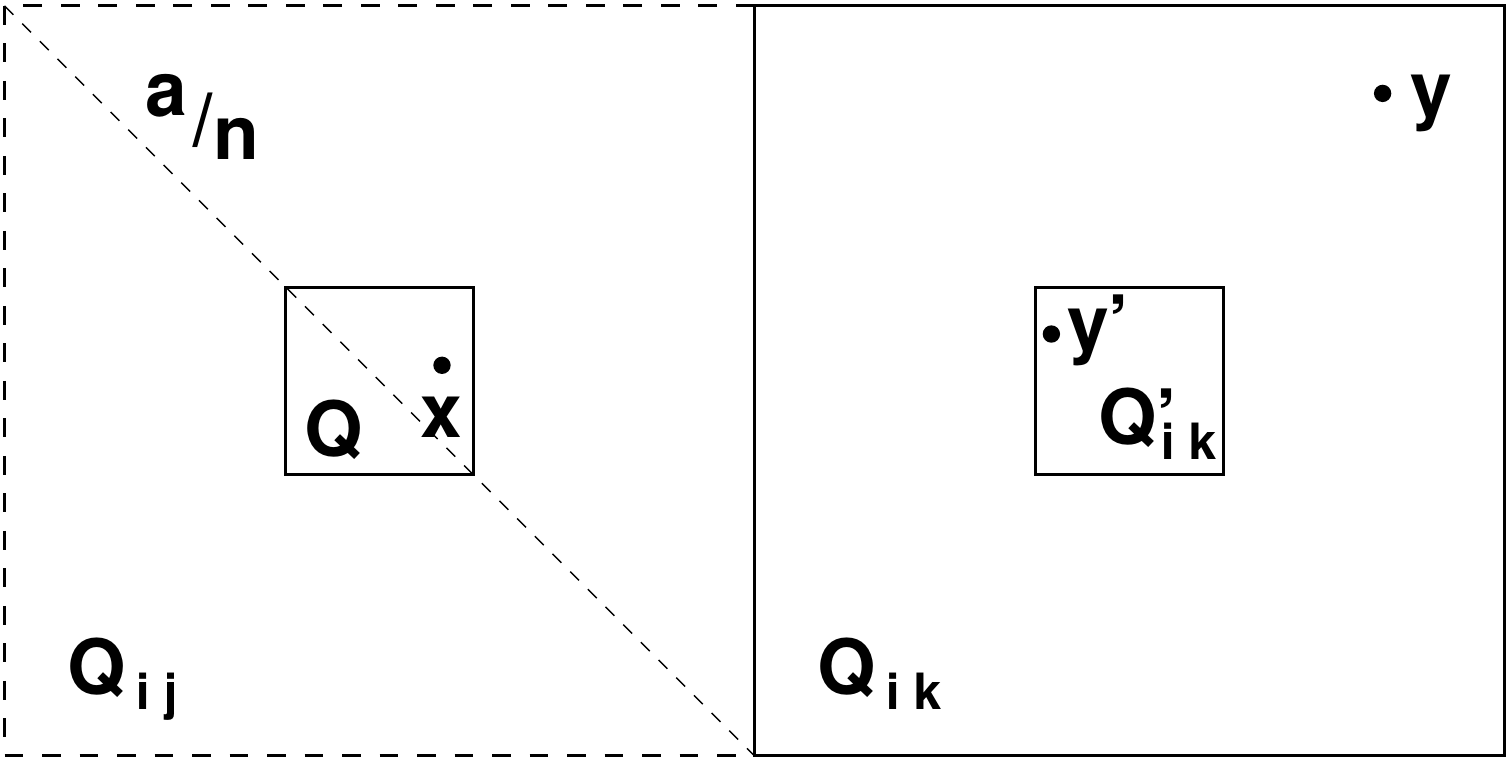}\\[2mm]
\raisebox{0cm}{Figure 2. The cubes $Q$, $Q_{ik}$ and $Q_{ik}'$} 
\end{center} 
If $d=2$, then $(1+2\sqrt d) |x-y|\inv\le e^2 |x-y|\inv \le |x-y|^{-3}$.  
Hence, defining   $C:= \max\{3,(1+2\sqrt d)^{d-\a} \}$, we obtain that
$G\mu_{Q_{ik}'}\le  C G\mu_{Q_{ik}}$ on $Q$. Thus    
\begin{equation}\label{GGG}
                     G\nu_i =  n^{-d} G\mu_Q   + n^{-d}\sum\nolimits_{1\le k\le n^d, k\ne j}  G\mu_{Q_{ik}'}
                   \le    n^{-d} G\mu_Q   + C G\mu_{Q_i}      \on Q,.
\end{equation} 
By our induction hypothesis (\ref{est-mq}),                                    
we  have  $(1/M) \|\mu_{Q_i}\|_\infty \le 2^{-(m-1)} c_1 $,  and hence,  by (\ref{capQ}) and  (iii),
\begin{equation*} 
     \bigl \| \frac 1{Mn^d} \,G\mu_Q\bigr\|_\infty \le \frac{c} {Mn^d\kap(r)}     
     < \frac  {c\inv} {2 M\kap(a)}\le \frac 12\, \bigl\|\frac 1M\,G\mu_{Q_i}\bigr\|_\infty\le  2^{-m}c_1
\end{equation*} 
(which allows us to continue our construction). So  (\ref{GGG}) leads to the inequality
\begin{equation*}
\frac 1M G\nu_i\le 2^{-m}c_1+  C2^{-(m-1)}c_1  \on Q.
 \end{equation*} 
By (i),  we know that
\begin{equation*} 
  G\mu_{m+1}=\frac 1M\left(G\nu_i+\sum\nolimits_{j\ne i} G\nu_j\right) \le \frac 1M G\nu_i+ G\mu_m+ 2^{-m} c_1  \on Q_i.
\end{equation*} 
Therefore  $G\mu_{m+1}  \le G\mu_m + (C+1)2^{-(m-1)} c_1$ on $Q$. Recalling the definitions   of $Q$ and $F_{m+1}$
(see (\ref{qij}) and (\ref{fm1})) and using the minimum principle, we finally see  that
\begin{equation}\label{gmgm}
    G\mu_{m+1}  \le G\mu_m + (C+1)2^{-(m-1)} c_1.
\end{equation} 
Clearly, (\ref{gmgm}) implies that the sequence $(G\mu_m)$ is bounded. As announced above,  let
$F$~denote the intersection of the decreasing sequence~$(F_m)$. Since the sequence $(\mu_m)$ is weakly
 convergent to a~probability measure~$\mu$      
on~$F$ satisfying $G\mu\le \sup_{m\in\nat} G\mu_m$, we obtain that 
$F$ is nonpolar. By  (\ref{mphi}), $m_\phi(F)=0$ finishing the proof of Theorem~\ref{existence}.

\bibliographystyle{plain} 

\begin{thebibliography}{10}

\bibitem{adams-hedberg}
D.R.  Adams and L.I. Hedberg.
\newblock{\em Function Spaces and Potential Theory}
\newblock. Grundlehren der Mathematischen Wissenschaften 314, Springer-Verlag Berlin, 1996.

\bibitem{aikawa-borichev}
H.~Aikawa and A.A.~Borichev.
\newblock Quasiadditivity and measure property of capacity and the tangential boundary behavior 
of harmonic functions.
\newblock{\em Trans. Amer. Math. Soc.}, 348:1013--1030, 1996.

\bibitem{gardiner-armitage}
D.H. Armitage and S.J. Gardiner.
\newblock {\em Classical Potential Theory}.
\newblock Springer Monographs in Mathematics. Springer-Verlag London Ltd.,
  London, 2001.

\bibitem{batakis-96}
A.~Batakis.
\newblock Harmonic measure of some {C}antor type sets.
\newblock {\em Ann. Acad. Sci. Fenn. Math.}, 21(2):255--270, 1996.

\bibitem{bishop-90}
Ch.J. Bishop and P.W. Jones.
\newblock Harmonic measure and arclength.
\newblock {\em Ann. of Math. (2)}, 132(3):511--547, 1990.

\bibitem{BH}
J.~Bliedtner and W.~Hansen.
\newblock {\em {Potential Theory -- An Analytic and Probabilistic Approach to
  Balayage}}.
\newblock Universitext. Springer, Berlin-Heidelberg-New York-Tokyo, 1986.

\bibitem{bogdan-burdzy-chen}
K.~Bogdan, K.~Burdzy, and Z.-Q.~Chen.
\newblock Censored stable processes.
\newblock{\em Probab. Theory Related Fields}, 127:89--152, 2003.

\bibitem{bourgain-87}
J.~Bourgain.
\newblock On the {H}ausdorff dimension of harmonic measure in higher dimension.
\newblock {\em Invent. Math.}, 87(3):477--483, 1987.

\bibitem{carleson-73}
L.~Carleson.
\newblock On the distortion of sets on a {J}ordan curve under conformal
  mapping.
\newblock {\em Duke Math. J.}, 40:547--559, 1973.

\bibitem{carleson-82}
L.~Carleson.
\newblock Estimates of harmonic measures.
\newblock {\em Ann. Acad. Sci. Fenn. Ser. A I Math.}, 7(1):25--32, 1982.

\bibitem{carleson-85}
L.~Carleson.
\newblock On the support of harmonic measure for sets of {C}antor type.
\newblock {\em Ann. Acad. Sci. Fenn. Ser. A I Math.}, 10:113--123, 1985.

\bibitem{chen-kim}
Z.-Q.~Chen and P.~Kim.
\newblock Green function estimate for censored stable processes.
\newblock {\em Probab. Theory Related Fields}, 124(4):595--610, 2002.

\bibitem{chen-kim-song}
Z.-Q.~Chen, P.~Kim, and R.~Song.
\newblock Two-sided heat kernel estimates for censored stable-like processes.
\newblock {\em Probab. Theory Related Fields}, 146:361--399, 2010.

\bibitem{gardiner-ghergu}
S.J. Gardiner and M.~Ghergu.
\newblock Champagne subregions of the unit ball with unavoidable bubbles.
\newblock {\em Ann. Acad. Sci. Fenn. Math.}, 35(1):321--329, 2010.

\bibitem{grzywny}
T.~Grzywny.
\newblock On Harnack inequality and H{\"o}lder regularity for isotropic unimodal L{\'e}vy processes. 
\newblock {\em Potential Anal.}, DOI 10.1007/s11118-013-9360-y, arXiv:1306.5909v4.

\bibitem{H-uniform}
W.~Hansen.
\newblock Uniform boundary {H}arnack principle and generalized triangle property.
\newblock{\em J. Funct. Anal.}, 226: 452--484, 2005.

\bibitem{H-course}
W. Hansen.
\newblock{\em Three views on potential theory}.
\newblock A course at Charles University (Prague), Spring 2008.
\newblock http://www.karlin.mff.cuni.cz/~hansen/lecture/ course-07012009.pdf. 

\bibitem{han-hue-subl}
W.~Hansen and H.~Hueber.
\newblock {The {D}irichlet problem for sub-{L}aplacians on nilpotent
              {L}ie groups -- geometric criteria for regularity}.
\newblock{\em Math. Annal.}, 276(4): 537--547, 1987.

\bibitem{convexity}
W.~Hansen and I.~Netuka.
\newblock Convexity properties of harmonic measures.
\newblock {\em Adv. Math.}, 218(4):1181--1223, 2008.

\bibitem{parabolic}
W.~Hansen and I.~Netuka.
\newblock Density of extremal measures in parabolic potential theory.
\newblock {\em Math. Ann.}, 345(3):657--684, 2009.

\bibitem{HN-champagne}
W. Hansen and I. Netuka.
\newblock Champagne subdomains with unavoidable bubbles.
\newblock{\em Adv. Math.} 244:106--116, 2013.

\bibitem{HN-rep-potential}
W. Hansen and I. Netuka.
\newblock Representation of potentials.
 Preprint 14011, CRC 701, University of Bielefeld.
To appear in \newblock{\em Rev. Roumaine Math. Pures Appl.}.

\bibitem{hawkes}
J.~Hawkes.
\newblock{Potential theory of L{\'e}vy processes}.
\newblock{\em Proc. London Math. Soc.}, 38(3): 335--352, 1979.

\bibitem{heinonen}
J.~Heinonen.
\newblock {\em Lectures on analysis on metric spaces}.
\newblock Springer-Verlag, New York, 2001.

\bibitem{jones-makarov-95}
P.W. Jones and N.G. Makarov.
\newblock Density properties of harmonic measure.
\newblock {\em Ann. of Math. (2)}, 142(3):427--455, 1995.

\bibitem{jones-88}
P.~W. Jones and T.~H. Wolff.
\newblock Hausdorff dimension of harmonic measures in the plane.
\newblock {\em Acta Math.}, 161(1-2):131--144, 1988.


\bibitem{kaufman-82}
R.~Kaufman and J.M. Wu.
\newblock Distortion of the boundary under conformal mapping.
\newblock {\em Michigan Math. J.}, 29(3):267--280, 1982.

\bibitem{landkof}
N.S. Landkof.
\newblock{\em  Foundations of Modern Potential Theory}.
\newblock   Grundlehren der Mathematischen Wissenschaften 180, Springer-Verlag New York,  1972.

\bibitem{maagli-87}
H. Maagli.
\newblock Repr\'esentation int\'egrale des potentiels.
\newblock In {\em S\'eminaire de {T}h\'eorie du {P}otentiel, {P}aris, {N}o.\
  8}, volume 1235 of {\em Lecture Notes in Math.}, pages 114--119. Springer,
  Berlin, 1987.

\bibitem{makarov-85-hausdorff}
N.G. Makarov.
\newblock Harmonic measure and the {H}ausdorff measure.
\newblock {\em Dokl. Akad. Nauk SSSR}, 280(3):545--548, 1985.

\bibitem{makarov-87-conformal}
N.G. Makarov.
\newblock Conformal mapping and {H}ausdorff measures.
\newblock {\em Ark. Mat.}, 25(1):41--89, 1987.

\bibitem{makarov-87-metric}
N.G. Makarov.
\newblock Metric properties of harmonic measure.
\newblock In {\em Proceedings of the {I}nternational {C}ongress of
  {M}athematicians, {V}ol. 1, 2 ({B}erkeley, {C}alif., 1986)}, pages 766--776,
  Providence, RI, 1987. Amer. Math. Soc.

\bibitem{makarov-98}
N.G. Makarov.
\newblock Fine structure of harmonic measure.
\newblock {\em Algebra i Analiz}, 10(2):1--62, 1998.

\bibitem{mimica-vondracek}
A.~Mimica and Z.~Vondra\v cek.
\newblock Unavoidable collections of balls for isotropic L{\'e}vy processes.
 \newblock{\em Stochastic Process. Appl.}, 124(3):1303--1334, 2014,
arXiv:1301.2441v2.

\bibitem{oksendal-80}
B.~{\O}ksendal.
\newblock Sets of harmonic measure zero.
\newblock In {\em Aspects of contemporary complex analysis ({P}roc. {NATO}
  {A}dv. {S}tudy {I}nst., {U}niv. {D}urham, {D}urham, 1979)}, pages 469--473.
  Academic Press, London, 1980.

\bibitem{oksendal-81}
B.~{\O}ksendal.
\newblock Brownian motion and sets of harmonic measure zero.
\newblock {\em Pacific J. Math.}, 95(1):179--192, 1981.

\bibitem{pommerenke-86}
Ch. Pommerenke.
\newblock On conformal mapping and linear measure.
\newblock {\em J. Analyse Math.}, 46:231--238, 1986.


\bibitem{pres}
J.~Pres.
\newblock Champagne subregions of the unit disc.
\newblock {\em Proc. Amer. Math. Soc.},  140: 3983--3992, 2012.

\bibitem{volberg-93}
A.~Volberg.
\newblock On the dimension of harmonic measure of {C}antor repellers.
\newblock {\em Michigan Math. J.}, 40(2):239--258, 1993.

\bibitem{wolff-93}
T.H. Wolff.
\newblock Plane harmonic measures live on sets of {$\sigma$}-finite length.
\newblock {\em Ark. Mat.}, 31(1):137--172, 1993.

\bibitem{wolff-95}
T.H. Wolff.
\newblock Counterexamples with harmonic gradients in {${\bf R}^3$}.
\newblock In {\em Essays on {F}ourier analysis in honor of {E}lias {M}. {S}tein
  ({P}rinceton, {NJ}, 1991)}, volume~42 of {\em Princeton Math. Ser.}, pages
  321--384. Princeton Univ. Press, Princeton, NJ, 1995.


\end{thebibliography}
\def\cprime{$'$} \def\cprime{$'$}

{\small \noindent 
Wolfhard Hansen,
Fakult\"at f\"ur Mathematik,
Universit\"at Bielefeld,
33501 Bielefeld, Germany, e-mail:
 hansen$@$math.uni-bielefeld.de}\\
{\small \noindent Ivan Netuka,
Charles University,
Faculty of Mathematics and Physics,
Mathematical Institute,
 Sokolovsk\'a 83,
 186 75 Praha 8, Czech Republic, email:
netuka@karlin.mff.cuni.cz}

\end{document}